\documentclass{amsart}
\usepackage{amsmath}%
\usepackage{amsfonts}%
\usepackage{amssymb}% 
\usepackage{amsthm}
\usepackage{amscd}
\usepackage{graphicx}
\usepackage{mathrsfs}
\usepackage{xcolor}
\usepackage{makecell}
\usepackage{tikz}
\usetikzlibrary{positioning}
\usepackage{comment}
\usepackage{url}
\usepackage[T1]{fontenc}
\usepackage{lmodern}
\usepackage{float}
\usepackage{newtxtext}
\usepackage[colorlinks=true,
            linkcolor=blue,
            citecolor=blue,
            urlcolor=blue]{hyperref}
\usepackage{subcaption}

\usepackage{soul}
\newtheorem{theorem}{Theorem}[section]

\newtheorem{corollary}[theorem]{Corollary}
\newtheorem{lemma}[theorem]{Lemma}
\newtheorem{proposition}[theorem]{Proposition}
\newtheorem{definition}[theorem]{Definition}
\newtheorem{example}[theorem]{Example}
\newtheorem{remark}[theorem]{Remark}

\newtheorem*{definition*}{Definition}
\newtheorem*{proposition*}{Proposition}
\newtheorem*{theorem*}{Theorem}
\newtheorem*{lemma*}{Lemma}
\newtheorem*{corollary*}{Corollary}
\newtheorem*{remark*}{Remark}

\numberwithin{equation}{section}

\usepackage{scalerel,stackengine}
\stackMath
\newcommand\reallywidehat[1]{%
\savestack{\tmpbox}{\stretchto{%
  \scaleto{%
    \scalerel*[\widthof{\ensuremath{#1}}]{\kern.1pt\mathchar"0362\kern.1pt}%
    {\rule{0ex}{\textheight}}%WIDTH-LIMITED CIRCUMFLEX
  }{\textheight}% 
}{2.4ex}}%
\stackon[-6.9pt]{#1}{\tmpbox}%
}
\begin{document}
\title{A Combinatorial Classification of Graded n-Absorbing Principal Monomial Ideals}

\author{Alison Becker}
\address{Independent Researcher, Wisconsin, USA}
\email{alison.elaine90@gmail.com}

\author{Thomas Stojsavljevic}
\address{Department of Math and Computer Science, Beloit College}
\email{stojsavljevictg@beloit.edu}

\subjclass[2020]{13A02 (Primary), 13A15 (Secondary), 13F20(Secondary)}
\keywords{graded commutative rings,
graded $n$-absorbing ideals,
$n$-absorbing primary ideals,
weakly $n$-absorbing ideals,
monomial ideals,
Cayley graphs,
simplicial geometry,
combinatorial commutative algebra}

\begin{abstract}
The theory of $n$-absorbing ideals provides a generalization of prime ideals and has motivated several investigations into their structural properties. In this paper, we study graded $n$-absorbing behavior for principal monomial ideals in polynomial rings and develop a combinatorial model for their classification. Let $R=F[x_{1},x_{2},\dots,x_{m}]$ be a polynomial ring over a field with the standard grading. We prove that a principal monomial ideal $I=(x^{\alpha})$ is graded $n$-absorbing if and only if the total degree of its generating monomial satisfies $\vert \alpha \vert \leq n$. Combining this characterization with results on $n$-absorbing and strongly $n$-absorbing ideals, we show that these three notions coincide for principal monomial ideals. This characterization yields a geometric correspondence between principal monomial ideals and lattice points in $\mathbb{N}^{m}$, under which graded $n$-absorbing ideals correspond precisely to the lattice points contained in the discrete simplex $\Delta_n^m=\{\alpha\in\mathbb{N}^{m}:|\alpha|\leq n\}.$ We further identify the associated divisibility poset with the Cayley graph of $(\mathbb{N}^{m},+) $and derive consequences for polynomial extensions, enumeration, and homological invariants.                                                                                         
\end{abstract}

\maketitle

\section{Introduction}

The theory of prime ideals is fundamental to commutative algebra, providing a framework for understanding the multiplicative structure of rings and their associated geometric properties. Recall, for a proper ideal $P$ of commutative ring $R$, $P$ is a prime ideal if whenever $ab\in P$, either $a\in P$ or $b\in P$. In this paper, we study an extension of prime ideals called $n$-absorbing ideals, and provide an explicit combinatorial classification of their characteristics in the case of principal monomial ideas. A proper ideal $I$ is $n$-absorbing if, whenever a product of $n+1$ elements belongs to $I$, some product obtained by omitting one factor also belongs to $I$ \cite{anderson_n_2011}. The case $n=1$ recovers prime ideals, while larger values of $n$ provide a hierarchy of increasingly weaker absorption conditions.

Since their introduction, $n$-absorbing ideals have been investigated from several perspectives, including $n$-absorbing primary ideals, graded 2-absorbing ideals, weakly absorbing ideals, strongly absorbing ideals, and factorization-theoretic generalizations \cite{badawi_2-absorbing_2014, becker_results_2015, naghani2016graded, al2019graded, al2017graded, akray2024graded, yousefian2024graded, choi2024n, secord24}. Despite this activity, explicit classifications of $n$-absorbing ideals remain relatively uncommon, particularly in graded settings where additional algebraic structure suggests the possibility of concrete combinatorial descriptions. This observation motivates the present work.

Our contributions are twofold. First, we develop a structural theory of graded $n$-absorbing ideals in $G$-graded commutative rings. Extending the framework previously established for graded $2$-absorbing ideals, we investigate graded $n$-absorbing, graded weakly $n$-absorbing, and graded strongly $n$-absorbing ideals together with their behavior under graded homomorphisms, quotient constructions, and homogeneous decompositions. We revisit the notion of a graded $2$-absorbing subgroup introduced by Naghani and Moghimi \cite{naghani2016graded}, identify an issue with its formulation, and provide a corrected definition that extends naturally to arbitrary $n$.The issue with the definition in \cite{naghani2016graded} motivates the following corrected formulation.
\begin{definition*}
Let $R=\bigoplus_{g\in G}R_{g}$ be a graded ring and let $I = \bigoplus_{g \in G} I_{g}$ be graded ideal of $R$. For $g \in G$, we say that the homogeneous component $I_{g}$ is an \textbf{\textit{n}-absorbing homogeneous component} of $I$ if $I_{g} \neq R_{g}$ and whenever $x_{1},x_{2},\ldots,x_{n+1}\in h(R)$ for $x_{1}x_{2}\cdots x_{n+1} \in I_{g}$, then $x_1x_2\cdots\reallywidehat{x_i}\cdots x_{n+1}\in I$ for some $i \in \{1, 2, \dots, n+1\}$.
\end{definition*}
\noindent The following observation shows that this definition provides the desired componentwise formulation of graded $n$-absorption.
\begin{proposition*}
Let $R=\bigoplus_{g\in G}R_g$ be a $G$-graded commutative ring and let $I=\bigoplus_{g\in G}I_g$ be a proper graded ideal of $R$. Then $I$ is graded $n$-absorbing if and only if every homogeneous component $I_g$ is an $n$-absorbing homogeneous component of $I$.
\end{proposition*}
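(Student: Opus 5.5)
The plan is to unwind both definitions and compare them directly. I take the standard definition of a graded $n$-absorbing ideal: a proper graded ideal $I$ such that whenever $x_1,\dots,x_{n+1}\in h(R)$ with $x_1x_2\cdots x_{n+1}\in I$, some product $x_1\cdots\reallywidehat{x_i}\cdots x_{n+1}$ with one factor omitted lies in $I$. The key observation is that a product of homogeneous elements is homogeneous. Concretely, if $x_i\in R_{g_i}$, then $x_1\cdots x_{n+1}\in R_{g}$ with $g=g_1g_2\cdots g_{n+1}$ (written additively if $G$ is abelian and denoted so). Since $I$ is graded, $I\cap R_g=I_g$. So a homogeneous product lies in $I$ if and only if it lies in the single component $I_g$ determined by the degrees of its factors.

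For the forward direction, assume $I$ is graded $n$-absorbing and fix $g\in G$. Suppose $x_1,\dots,x_{n+1}\in h(R)$ with $x_1\cdots x_{n+1}\in I_g$. Then the product lies in $I$, so the graded $n$-absorbing hypothesis gives an index $i$ with $x_1\cdots\reallywidehat{x_i}\cdots x_{n+1}\in I$. This is exactly the absorption condition in the definition of an $n$-absorbing homogeneous component.

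For the converse, assume every $I_g$ is an $n$-absorbing homogeneous component. Take homogeneous $x_1,\dots,x_{n+1}$ whose product $y$ lies in $I$. Let $g$ be the degree of $y$ as above; then $y\in I\cap R_g=I_g$. Applying the hypothesis to the component $I_g$ yields the required omitted product in $I$. One degenerate case needs a remark: if $y=0$, then $y\in I_g$ for every $g$, and in particular for the component obtained from the degrees of the factors, so the argument goes through unchanged. Properness of $I$ is assumed in the statement, so nothing further is needed there.

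The main obstacle is the side condition $I_g\neq R_g$ in the definition of an $n$-absorbing homogeneous component. In the forward direction we must check that $I_g\neq R_g$ for every $g$, and a proper graded ideal can still contain a whole component $R_g$. For example, in $F[x]$ the ideal $(x)$ contains all of $R_1$. I would handle this as follows. If $1\in R_e$ and $I_e=R_e$, then $1\in I$, contradicting properness, so $I_e\neq R_e$ always holds. For $g\neq e$, I would read the condition $I_g\neq R_g$ as vacuous or non-restrictive in the intended sense, or else restrict the statement to components where $I_g\neq R_g$. Since this clause is the only point where the equivalence is not a purely formal unwinding, I would first try to confirm from the paper's conventions that it is meant in this way. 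If the clause is meant literally, the forward implication fails for examples like $(x)\subseteq F[x]$, and I would instead prove the statement with the clause interpreted as above.
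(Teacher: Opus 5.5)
Your converse direction is the paper's proof of Proposition~\ref{prop:components}: a product of homogeneous elements is homogeneous, so it lies in a single component $I_g$, and the component hypothesis supplies the omitted product. Your handling of a zero product (any $g$ works) is slightly more careful than the paper, which asserts that $g$ is unique. That converse is in fact the only direction the paper proves. The ``only if'' half of the stated equivalence is never argued, and your diagnosis of why is correct: read literally, it is false. Your example $(x)\subseteq F[x]$ is prime, hence graded $n$-absorbing for every $n$, yet $I_1=R_1$. More bluntly, under the $\mathbb{Z}$-grading of a polynomial ring every graded ideal has $I_g=R_g=0$ for $g<0$. So no graded ideal there has all of its components $n$-absorbing in the sense of Definition~\ref{def:nabs-component}.

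Of your two proposed repairs, only the first is sound. Dropping the clause $I_g\neq R_g$ turns both directions into the formal unwinding you wrote; at $g=e$ that clause is automatic from properness, as you note. Restricting the statement to components with $I_g\neq R_g$ keeps the forward direction but breaks the converse. Take $I=(x^2)\subseteq F[x]$ and $n=1$. The only components with $I_g\neq R_g$ are $I_0=I_1=0$, and they satisfy the absorption clause trivially because $F[x]$ is a domain. Yet $x\cdot x\in I$ with $x\notin I$. The converse needs the absorption clause precisely on components with $I_g=R_g$, since that is where products like $x^2$ land.
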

\noindent Thus, the graded $n$-absorbing property of a graded ideal can be characterized componentwise, allowing the behavior of such ideals to be studied through their homogeneous components.

We continue to explore $n$-absorbing characteristics of graded rings through invariance and monotonicity results, which builds a foundation upon which we illustrate that the following open conjecture of \cite{anderson_n_2011} holds in the setting of principal monomial ideals: 
\begin{theorem*}
    Let $F$ be a field and $R = F[x_1, \dots, x_m]$. For every proper, principal monomial ideal $I = (x^{\alpha})$ of $R$, $\omega_{R[X]}(I[X]) = \omega_{R}(I).$
\end{theorem*}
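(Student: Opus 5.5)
We will show that both sides of the equation equal $|\alpha|$, the total degree of the generating monomial $x^{\alpha}$. Recall that $\omega_R(I)$ denotes the least $n$ for which $I$ is $n$-absorbing.

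\textbf{Step 1: $\omega_R(I)=|\alpha|$.} Write $x^{\alpha}=y_1y_2\cdots y_d$ as a product of $d=|\alpha|$ variables, listed with multiplicity.
\begin{itemize}
\item \emph{Lower bound.} For $n=d-1$, the product of the $d$ factors $y_i$ lies in $I$, but omitting any one factor gives a monomial of degree $d-1$, which is not divisible by $x^{\alpha}$. Hence $I$ is not $(d-1)$-absorbing, and $\omega_R(I)\ge d$.
\item \emph{Upper bound.} We must show that $I$ is $d$-absorbing. Since $I$ is a principal ideal generated by a product of $d$ primes, we can invoke the result of \cite{anderson_n_2011}: a product of $d$ prime ideals (here $(y_1)\cdots(y_d)=I$) is $d$-absorbing. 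This gives $\omega_R(I)\le d$.
\end{itemize}
Alternatively, we can use the paper's main characterization, which states that $I$ is graded $n$-absorbing iff $|\alpha|\le n$, and that this coincides with $n$-absorbing for principal monomial ideals.

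\textbf{Step 2: $I[X]$ is a principal monomial ideal of the same degree.} Set $R[X]=F[x_1,\dots,x_m,X]$, which is again a polynomial ring over $F$. Then $I[X]=IR[X]=(x^{\alpha})R[X]$ is the principal monomial ideal generated by $x^{\alpha}X^{0}$, whose total degree is still $|\alpha|$. Applying Step 1 in $m+1$ variables gives $\omega_{R[X]}(I[X])=|\alpha|=\omega_R(I)$.

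\textbf{Main obstacle.} The delicate point is the upper bound in Step 1. The absorbing condition concerns arbitrary polynomials, not merely monomials or homogeneous elements. If we route the argument through the graded characterization, we need the paper's coincidence result between the graded and ordinary notions.

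If instead we argue directly, we proceed as follows. Suppose $x^{\alpha}$ divides $a_1\cdots a_{d+1}$. Since $R$ is a UFD, each prime power $x_j^{\alpha_j}$ divides the product. Distribute the $d$ prime factors $y_1,\dots,y_d$ (with multiplicity) among the $a_i$, using unique factorization, so that each $y_k$ is assigned to one $a_i$ it divides. Because there are $d$ prime factors and $d+1$ elements $a_i$, at most $d$ of the $a_i$ receive an assignment, so some $a_i$ receives none. Omitting that $a_i$ leaves a product still divisible by $x^{\alpha}$.

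This counting argument is really the Anderson--Badawi product-of-primes result, so citing it suffices. Step 2 is routine.
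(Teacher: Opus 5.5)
Your proof is correct and follows the paper's outline. Both arguments note that $I[X]=(x^{\alpha})F[x_1,\dots,x_m,X]$ is again a principal monomial ideal of total degree $|\alpha|$, and then show $\omega=|\alpha|$ on both sides. The difference is how $\omega((x^{\alpha}))=|\alpha|$ is obtained. The paper cites Corollary 3.7 of Choi--Walker \cite{choi2019n} in $m$ and in $m+1$ variables. You prove it directly:
\begin{itemize}
\item the lower bound comes from writing $x^{\alpha}$ as a product of $|\alpha|$ variables, plus the standard fact that $n$-absorbing implies $(n+1)$-absorbing, which you use implicitly to pass from ``not $(d-1)$-absorbing'' to $\omega_R(I)\ge d$;
\item the upper bound comes from distributing the prime factors $x_j$ among the $a_i$.
\end{itemize}
This makes your argument self-contained and shows that nothing beyond unique factorization in $F[x_1,\dots,x_m,X]$ is needed. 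The paper's version is shorter but treats the key fact as a black box. Your suggested alternative through Theorem~\ref{thm3-bijection} and Theorem~\ref{thm:princgradedstrngabs} is not an independent route, since both of those rest on the same Choi--Walker corollary.

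You should drop the claim that the upper bound follows by citing a result that a product of $d$ prime ideals is $d$-absorbing. For arbitrary primes this is false. Let $P=(x^5-yz,\;y^2-x^2z,\;z^2-x^3y)\subset F[x,y,z]$, the prime ideal of the monomial curve $(t^3,t^7,t^8)$, and let $E=x^8-3x^3yz+xy^3+z^3$. Then $x\cdot x\cdot E=(x^5-yz)^2-(y^2-x^2z)(z^2-x^3y)\in P^2$, while $x^2\notin P$ and $xE\notin P^2$. The last claim holds because, with $\deg x=3$, $\deg y=7$, $\deg z=8$, the element $xE$ is homogeneous of degree $27$, whereas $P^2$ has no nonzero elements of degree below $28$. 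So $P\cdot P$ is not $2$-absorbing.

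What actually proves your upper bound is the direct counting argument. It works here because the primes are principal and generated by pairwise non-associate prime elements of a domain. When you write it up, make the assignment respect multiplicities. Assuming all $a_i\neq 0$ (otherwise the claim is trivial), let $v_{x_j}$ be the $x_j$-adic valuation; then $\sum_i v_{x_j}(a_i)\ge \alpha_j$. Hence the $\alpha_j$ copies of $x_j$ can be assigned so that no $a_i$ receives more copies than it is divisible by, and a factor receiving no copies can then be omitted.
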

\noindent Motivated by our study of principal monomial ideals, we establish a complete combinatorial characterization of graded $n$-absorbing principal monomials in standard-graded polynomial rings. Identifying a principal monomial $x^{\alpha}$ with its exponent vector $\alpha\in\mathbb{N}^{m}$, gives a complete geometric characterization of the graded $n$-absorbing property by proving that $x^{\alpha}$ is graded $n$-absorbing if and only if $\alpha \in \Delta_{n}^{m}$ 
where
\[
\Delta_n^m=\{\alpha\in\mathbb{N}^{m}: \vert \alpha \vert \leq n\}.
\]
Thus, the graded $n$-absorbing property is completely determined by membership in a finite simplex of lattice points. This characterization transforms an algebraic absorption condition into a geometric constraint and provides a natural combinatorial realization of graded $n$-absorbing principal monomial ideals. 

We classify the graded $n$-absorbing properties of principal monomial ideals via a combinatorial dissection of lattice points in $\mathbb{N}^m$, 
\begin{theorem*}
    Let $F$ be a field and let $R=F[x_{1},x_{2},\dots,x_{m}]$ with the standard grading. For $I=(x^{\alpha})$, the following are equivalent:
    \begin{enumerate}
        \item $I$ is graded $n$-absorbing.
        \item The exponent vector satisfies $\vert \alpha \vert \leq n$.
        \item The vector $\alpha$ lies in the discrete simplex
        \[
        \Delta_{n}^{m} = \{\alpha \in \mathbb{N}^{m}: \vert \alpha \vert \leq n\}.
        \]
    \end{enumerate}
\end{theorem*}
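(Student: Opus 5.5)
(2)$\Leftrightarrow$(3) is the definition of $\Delta_n^m$, so everything rests on (1)$\Leftrightarrow$(2). I assume the notion of graded $n$-absorbing given earlier: a proper graded ideal $I$ is graded $n$-absorbing if, whenever a product of $n+1$ homogeneous elements lies in $I$, the product of some $n$ of them already lies in $I$. I also use that in $R=F[x_1,\dots,x_m]$ the ideal $(x^\alpha)$ is proper, and is prime-like with respect to monomials: a polynomial $f$ lies in $(x^\alpha)$ if and only if every monomial in the support of $f$ is divisible by $x^\alpha$. Write $d=|\alpha|$.

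For (1)$\Rightarrow$(2), argue by contraposition. Suppose $d\ge n+1$. Write $x^\alpha$ as a product of $d$ variables, $x^\alpha=y_1y_2\cdots y_d$ with each $y_j\in\{x_1,\dots,x_m\}$, allowing repetitions. Group these into $n+1$ nonempty blocks, say $z_i=y_i$ for $i\le n$ and $z_{n+1}=y_{n+1}\cdots y_d$. Each $z_i$ is a monomial, hence homogeneous, and $z_1\cdots z_{n+1}=x^\alpha\in I$. Omitting $z_i$ gives the monomial $x^\alpha/z_i$, which has degree strictly less than $d$. It is nonzero, so it cannot be divisible by $x^\alpha$, and therefore it does not lie in $I$. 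Thus $I$ is not graded $n$-absorbing.

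For (2)$\Rightarrow$(1), suppose $d\le n$, and let $f_1,\dots,f_{n+1}$ be homogeneous with $f_1\cdots f_{n+1}\in I$. This is the main obstacle, because the $f_i$ are homogeneous polynomials rather than monomials. I would reduce to the monomial case in one of two ways.

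\begin{itemize}
\item \textbf{Valuation argument.} For each variable $x_k$, let $v_k$ be the $x_k$-adic valuation. Membership of $g$ in $(x^\alpha)$ is equivalent to $v_k(g)\ge \alpha_k$ for all $k$. This holds because $x_k^{\alpha_k}\mid g$ for each $k$ and the variables are coprime in the UFD $R$, so $x^\alpha\mid g$.
\item \textbf{Known result.} Alternatively, invoke the known fact (Anderson--Badawi) that $(x^\alpha)$ is $n$-absorbing whenever $|\alpha|\le n$. An $n$-absorbing ideal is a fortiori graded $n$-absorbing.
\end{itemize}

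Assuming the valuation route, the key combinatorial step is as follows. Each $v_k$ is additive on products, so
\[
\sum_{i=1}^{n+1} v_k(f_i)\ge \alpha_k \quad \text{for every } k.
\]
The goal is to find an index $i$ with $\sum_{j\ne i} v_k(f_j)\ge\alpha_k$ for all $k$. For each $k$ with $\alpha_k>0$, choose a set $S_k$ of at most $\alpha_k$ indices whose valuations already sum to at least $\alpha_k$. Such a set exists: take indices with $v_k(f_j)\ge1$ greedily, and at most $\alpha_k$ of them are needed. The union of the sets $S_k$ has size at most $\sum_k\alpha_k=d\le n<n+1$. Hence some index $i$ lies outside every $S_k$. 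Omitting $f_i$ keeps every valuation inequality, so $\prod_{j\ne i}f_j\in I$.

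This argument never uses homogeneity, so it proves the stronger ungraded statement. That is consistent with the paper's claim that the graded, ordinary, and strongly $n$-absorbing notions coincide here. The expected delicate point is the additivity $v_k(fg)=v_k(f)+v_k(g)$. It follows from writing $f=x_k^{a}f'$ and $g=x_k^{b}g'$ with $x_k\nmid f'$ and $x_k\nmid g'$, and using that $x_k$ is prime in $R$, so $x_k\nmid f'g'$.
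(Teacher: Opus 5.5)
Your proof is correct. The direction (1)$\Rightarrow$(2) is the paper's degree-counting argument: split $x^\alpha$ into $n$ single variables and one block containing the remaining $|\alpha|-n$ factors. Your second bullet for (2)$\Rightarrow$(1) is also exactly the paper's route. The paper takes $\omega((x^\alpha))=|\alpha|$ from Corollary 3.7 of Choi and Walker \cite{choi2019n}, not from Anderson--Badawi, and then applies Lemma \ref{lem:ntogradedn}.

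Your main route is genuinely different: the $x_k$-adic valuation plus a pigeonhole count. Each witness set $S_k$ has at most $\alpha_k$ indices, so their union has at most $|\alpha|\le n<n+1$ indices. This buys several things.
\begin{itemize}
\item It replaces the external citation with a short self-contained proof.
\item It proves the ungraded $n$-absorbing property directly.
\item Combined with your (1)$\Rightarrow$(2), it recovers $\omega((x^\alpha))=\omega_{gr}((x^\alpha))=|\alpha|$. That gives the equivalence of $n$-absorbing, graded $n$-absorbing, and $|\alpha|\le n$ in Theorem \ref{thm:princgradedstrngabs} without appealing to Choi--Walker.
\item It uses only that $R$ is a UFD in which the $x_k$ are pairwise non-associate primes. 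It therefore actually proves the paper's unsupported remark that the field $F$ may be replaced by a UFD. More generally, it shows that $(p_1^{a_1}\cdots p_r^{a_r})$, with the $p_i$ pairwise non-associate primes, is $n$-absorbing in any UFD when $\sum a_i\le n$.
\end{itemize}
The citation buys only brevity.

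Two small corrections. First, the statement as quoted omits ``proper''. For $\alpha=0$ we get $I=R$, so (2) holds while (1) fails by definition. Your assertion that $(x^\alpha)$ is proper should therefore be the hypothesis $\alpha\neq 0$, as in the body version of the theorem. Second, deriving $v_k(fg)=v_k(f)+v_k(g)$ from $f=x_k^{a}f'$ requires $f,g\neq 0$. If some $f_i=0$, the conclusion is immediate by omitting any other factor, which is possible since $n\ge 1$; alternatively set $v_k(0)=\infty$.
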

\noindent We further interpret the poset of principal monomial ideals through the Cayley graph of the additive monoid $(\mathbb N^m,+)$, identifying the graded $n$-absorbing ideals as an induced finite subgraph. This perspective establishes new connections between graded $n$-absorbing ideals, combinatorial commutative algebra, and classical graded invariants including Hilbert series, Krull dimension, and graded Betti numbers.

The paper is organized as follows. Section 2 establishes notation and recalls the necessary background on graded commutative rings, graded $n$-absorbing ideals, and principal monomial ideals. Section 3 develops the structural theory of graded $n$-absorbing ideals in $G$-graded commutative rings. Section 4 develops the combinatorial model for principal monomial ideals, proves the simplex characterization and Anderson-Badawi Conjecture III for this class, and investigates the resulting connections with Cayley graphs and classical graded invariants. We conclude with several illustrative examples and directions for future research.

Throughout this paper all rings are assumed to be commutative with identity. We begin by recalling the standard definitions concerning graded commutative rings and graded ideals that will be used throughout.

\section{Background and Notation}

Let $G$ be a group with identity $e$, we say that a $G$-graded ring is a ring $R$ together with a decomposition
\begin{equation}
    R = \bigoplus_{g \in G} R_{g},
\end{equation}
as a $\mathbb{Z}$-module such that $R_{g}R_{h} \subseteq R_{g+h}$ for all $g,h \in G$ . We denote the graded ring $R$ together with its $G$-grading by $G(R)$. The summands $R_{g}$ are called the homogeneous components and the elements of the summands are called homogeneous elements of degree $g$. The set of homogeneous elements of $R$, denoted by $h(R)$, is given by
\begin{equation}
    h(R) = \bigcup_{g \in G} R_{g}.
\end{equation}
From this construction, if $a \in G(R)$, then $a$ can be written uniquely as 
\begin{equation}
    a = \sum_{g \in G} a_{g},
\end{equation}
where $a_{g}$ is called the $g$-th homogeneous component of $a$. Additionally, if $R$ is a G-graded ring, then $R_{e}$ is a subring of $R$ and $R_{g}$ is an $R_{e}$-module for all $g \in G$ \cite{eisenbud2013commutative}. 

An ideal $I$ of a graded ring $R$ is said to be a graded ideal (or homogeneous ideal) whenever
\begin{equation}
    I = \bigoplus_{g \in G} \left(I \cap R_{g}\right) =\bigoplus_{g \in G} I_{g}.
\end{equation}
Equivalently, we say $I$ is a graded ideal of $R$ if every element $a \in I$, all the homogeneous elements of $a$ are in $I$. Finally, a graded ideal $P$ of a G-graded ring $G(R)$ is said to be a graded prime ideal if $P \neq R$ and whenever $ab \in P$ for $a,b \in h(R)$ then either $a \in P$ or $b \in P$ . Similarly, we can define a weakly graded prime ideal by adding the condition that $0 \neq ab \in P$ to the previous definition \cite{eisenbud2013commutative, northcott_homogeneous_1955}. 

Throughout the remainder of the paper,
\begin{itemize}
\item $R=\bigoplus_{g\in G}R_g$ denotes a $G$-graded commutative ring;

\item $h(R)$ denotes the set of homogeneous elements;

\item all graded ideals are assumed proper unless stated otherwise.

\end{itemize}

\subsection{\textit{n}-Absorbing Ideals and \textit{n}-Absorbing Primary Ideals}

In \cite{anderson_n_2011}, Anderson and Badawi defined an $n$-absorbing ideal of a commutative ring $R$ in the following fashion.

\begin{definition}
A proper ideal $I$ of a commutative ring $R$ is called an
\textbf{\textit{n}-absorbing ideal} if whenever
\[
a_{1}a_{2}\cdots a_{n+1}\in I
\]
for $a_{1},\dots,a_{n+1}\in R$, then
\[
a_{1}\cdots\reallywidehat{a_i}\cdots a_{n+1}\in I
\]
for some $i\in\{1,\dots,n+1\}$.
\end{definition}

In \cite{becker_results_2015}, Becker defined a natural extension of 2-absorbing primary ideals studied by Badawi et al in \cite{badawi_2-absorbing_2014} into $n$-absorbing primary ideals. 

\begin{definition}
Let $n$ be a positive integer. A proper ideal $I$ of a commutative ring $R$ is an \textbf{\textit{n}-absorbing primary ideal of \textit{R}} if, whenever $x_{1},x_{2}, \dots, x_{n+1} \in R$ and $x_{1}x_{2}\dots x_{n+1} \in I$, then either $x_{1}x_{2}\dots x_{n} \in I$ or a product of $n$ of the $x_{i}$'s (other than $x_{1}x_{2}\dots x_{n}$) is in $\sqrt{I}$.
\label{def:nabsprim1}
\end{definition}

\noindent Equivalently, $n$-absorbing primary ideals can be defined in the following fashion.

\begin{definition}
A proper nonzero ideal $I$ of $R$ is called an \textbf{\textit{n}-absorbing primary ideal}
if whenever $x_1,x_2,\dots,x_{n+1}\in R$ and
\[
x_1x_2\cdots x_{n+1} \in I,
\]
then either
\[
x_1\cdots \widehat{x_k}\cdots x_{n+1} \in I
\]
for some $k\in[1,n+1]$, or
\[
x_1\cdots \reallywidehat{x_i}\cdots x_{n+1} \in \sqrt{I}
\quad \text{and} \quad
x_1\cdots \reallywidehat{x_j}\cdots x_{n+1} \in \sqrt{I}
\]
for some $i,j\in[1,n+1]$ with $i\neq j$.
\label{def:nabsprim2}
\end{definition}

We see that if there are no products of $n$ elements in $I$, then we have $n$ products of the form $x_{1}x_{2}\dots x_{n+1} \in I$. That is, $x_{2}x_{3}\dots x_{n+1}x_{1} \in I$, $x_{3}x_{4}\dots x_{n+1}x_{1}x_{2} \in I$, and so forth. Thus, we see that if any two products of $n$ elements are in $\sqrt{I}$, there is a product of $n$ elements in $\sqrt{I}$ for each case.

\subsection{Monomial Ideals}

Consider the polynomial ring $F[x_{1},\dots,x_{m}]$ in $m$ variables over a field $F$, equipped with the standard grading
\[
\deg(x_i)=1
\]
for each $i=1,\dots,m$. For a vector 
\[
\alpha=(a_{1},\ldots,a_{m})\in\mathbb{N}^{m},
\]
we write
\[
x^{\alpha}=x_{1}^{a_1}\cdots x_{m}^{a_m}.
\]
The vector $\alpha$ is called the exponent vector of the monomial $x^{\alpha}$, and its total degree is given by
\[
|\alpha|=a_{1}+\cdots+a_{m}.
\]
For exponent vectors $\alpha,\beta\in\mathbb{N}^{m}$, divisibility of monomials is determined by the coordinatewise partial order:
\[
x^\alpha \mid x^\beta \quad \text{if and only if} \quad \alpha_{i}\leq \beta_{i}
\]
for all $i=1,\dots,m$.

\begin{definition}
Let $(P,\leq)$ be a partially ordered set. The \textbf{Hasse diagram} of $P$ is the graph whose vertices are the elements of $P$, with an edge between $x$ and $y$ whenever one covers the other; that is, whenever $x<y$ and there is no $z\in P$ such that $x<y<z$. When the partial order is represented geometrically, the larger element is conventionally placed above the smaller element.
\end{definition}

For the divisibility poset of principal monomial ideals, the cover relations
correspond to multiplication by a single variable.

\subsection{Cayley Graphs}
The notion of a Cayley graph originates in group theory (see, for example,\cite{godsil2013algebraic}). Since our setting is the additive monoid $\mathbb{N}^m$, we adopt the following natural extension of this construction.

\begin{definition}
Let $M$ be an additive monoid generated by a subset $S\subseteq M$. The (directed) Cayley graph of $M$ with respect to $S$, denoted $\operatorname{Cay}(M,S)$, is the directed graph whose vertex set is $M$, with a directed edge
\[
\alpha \longrightarrow \alpha+s
\]
for every $\alpha\in M$ and $s\in S$.
\end{definition}
Throughout this paper we regard $\operatorname{Cay}(M,S)$ as its underlying undirected graph, since edge orientation plays no role in our arguments. In particular, we consider the Cayley graph
\[
\operatorname{Cay}(\mathbb N^m,\{e_1,\ldots,e_m\}),
\]
where $e_1,\ldots,e_m$ are the standard basis vectors of $\mathbb N^m$.

\subsection{Krull Dimension and Graded Resolutions}

We use the following standard definition of Krull dimension \cite{eisenbud2013commutative}.

\begin{definition*}
The \textbf{Krull dimension} of a commutative ring $R$, denoted $\dim(R)$, is the supremum of the lengths of all chains of prime ideals
\[
\mathfrak{p}_0\subsetneq\mathfrak{p}_1\subsetneq\cdots\subsetneq\mathfrak{p}_n
\]
in $R$. Thus, a chain with $n+1$ prime ideals has length $n$.
\end{definition*}

We next recall the definitions of graded free resolutions and minimal graded free resolutions \cite{peeva2010graded}.

\begin{definition*}
Let $S=F[x_1,\ldots,x_m]$ be a standard graded polynomial ring and let $M$ be a finitely generated graded $S$-module. A \textbf{graded free resolution} of $M$ is an exact sequence
\[
\cdots \longrightarrow F_2 \longrightarrow F_1
\longrightarrow F_0 \longrightarrow M \longrightarrow 0,
\]
where each $F_i$ is a graded free $S$-module and each homomorphism in the resolution is homogeneous of degree $0$.
\end{definition*}

\begin{definition*}
A graded free resolution
\[
\mathcal{F}:\quad \cdots \longrightarrow F_{2}
\longrightarrow F_{1} \longrightarrow F_{0} \longrightarrow M
\longrightarrow 0
\]
of a finitely generated graded $S$-module $M$ is called a \textbf{minimal graded free resolution} if, for every $i\geq 1$,
\[
\operatorname{im}(d_i)\subseteq \mathfrak{m}F_{i-1},
\]
where $d_i:F_i\to F_{i-1}$ is the differential and $\mathfrak{m}=(x_1,\ldots,x_m)$ is the homogeneous maximal ideal of $S$.
\end{definition*}

For $j\in\mathbb{Z}$, the graded shift $S(-j)$ denotes the graded $S$-module whose degree-$d$ component is $S_{d-j}$. Equivalently, the free generator of $S(-j)$ has degree $j$.

\begin{definition*}
Let $M$ be a finitely generated graded $S$-module, and let
\[
\mathcal{F}:\quad \cdots \longrightarrow F_{2}
\longrightarrow F_{1} \longrightarrow F_{0} \longrightarrow M
\longrightarrow 0
\]
be its minimal graded free resolution. Each free module $F_i$ can be written in the form
\[
F_{i}=\bigoplus_{j\in\mathbb{Z}} S(-j)^{\beta_{i,j}(M)}.
\]
The nonnegative integers $\beta_{i,j}(M)$ are called the \textbf{graded Betti numbers} of $M$.
\end{definition*}

\section{Results on Graded \textit{n}-Absorbing Ideals}

The notion of a graded $2$-absorbing ideal was introduced in
\cite{al2019graded}. In this section, we extend this notion to arbitrary positive integers $n$ and investigate the resulting graded $n$-absorbing, graded weakly $n$-absorbing, and graded strongly $n$-absorbing ideals.

\begin{definition}
A graded ideal $I$ of $G(R)$ is said to be a \textbf{graded \textit{n}-absorbing ideal} if $I \neq R$ and whenever 
\[
a_{1}a_{2}\dots a_{n+1} \in I
\]
for $a_{1}, a_{2}, \dots, a_{n+1} \in h(R)$, then 
\[
a_{1}a_{2}\dots\reallywidehat{a_{i}}\dots a_{n}a_{n+1} \in I
\]
for some $i \in \{1,2,\dots,n+1\}$.
\end{definition}

\begin{definition}
 A graded ideal $I$ of $G(R)$ is said to be a \textbf{graded weakly n-absorbing ideal} if $I\neq R$ and whenever 
 \[
 a_1a_2\cdots a_{n+1} \in I
 \]
for $a_1, a_2, \dots, a_{n+1} \in h(R)$ with 
 \[
a_1a_2 \cdots a_{n+1} \neq 0
 \]
 then
 \[
a_1a_2\cdots \reallywidehat{a_i}\cdots a_{n+1} \in I
 \]
for some $i \in \{1,2, \dots, n+1\}$.
\end{definition}

\begin{definition}
A proper, graded ideal $I$ of $R$ is \textbf{graded strongly \textit{n}-absorbing} if whenever 
\[
I_1\cdots I_{n+1} \subseteq I
\]
for graded ideals $I_1, \dots, I_{n+1}$ of $R$, then 
\[
I_1\cdots \reallywidehat{I_k}\cdots I_{n+1} \subseteq I .
\]
for some $k \in \{1, 2, \dots, n+1\}$.
\end{definition}

\begin{lemma}\label{lem:ntogradedn}
Let $R=\bigoplus_{g\in G}R_g$ be a graded commutative ring, and let $I$ be a graded ideal of $R$. If $I$ is an $n$-absorbing ideal of $R$, then $I$ is a graded $n$-absorbing ideal of $R$.
\end{lemma}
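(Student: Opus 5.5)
The proof is a direct specialization of the definitions: the graded $n$-absorbing condition is the ordinary $n$-absorbing condition restricted to homogeneous elements. The plan is to check the two requirements in the definition of a graded $n$-absorbing ideal, properness and the absorption property on $h(R)$.

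\emph{Properness.} By hypothesis $I$ is an $n$-absorbing ideal, so by the definition of Anderson and Badawi it is proper, $I\neq R$. Since $I$ is assumed graded, $I$ is a proper graded ideal, which is the first requirement.

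\emph{Absorption.} Take $a_1,\dots,a_{n+1}\in h(R)$ with $a_1a_2\cdots a_{n+1}\in I$. Since $h(R)\subseteq R$, these are in particular elements of $R$ whose product lies in $I$. The $n$-absorbing property of $I$ in $R$ therefore gives an index $i\in\{1,\dots,n+1\}$ with
\[
a_1\cdots\reallywidehat{a_i}\cdots a_{n+1}\in I,
\]
which is exactly the graded absorption condition. No homogeneous decomposition of the elements is needed, because the hypothesis is quantified over all of $R$ and the conclusion only concerns homogeneous elements.

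There is no real obstacle here. The only points to state explicitly are that the gradedness of $I$ is part of the hypothesis, not something to derive, and that properness is built into the definition of an $n$-absorbing ideal.
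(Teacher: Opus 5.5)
Your proposal is correct and follows the same argument as the paper: you apply the $n$-absorbing condition directly to homogeneous elements, which are in particular elements of $R$. The only difference is that you state properness ($I\neq R$) explicitly, which the paper's proof leaves implicit.
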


\begin{proof}
Suppose $I$ is an $n$-absorbing ideal of $R$. Let
\[
a_1,a_2,\dots,a_{n+1}\in h(R)
\]
be homogeneous elements such that
\[
a_1a_2\cdots a_{n+1}\in I.
\]
Since $I$ is $n$-absorbing, there exists some $i\in\{1,\dots,n+1\}$ such that
\[
a_1\cdots\widehat{a_i}\cdots a_{n+1}\in I.
\]
Because the $a_i$ are homogeneous, this is exactly the condition required for $I$ to be graded $n$-absorbing. Therefore, $I$ is a graded $n$-absorbing ideal.
\end{proof}

\begin{remark}
These two properties of ideals, $n$-absorbing and graded $n$-absorbing, should be distinguished carefully. A graded $n$-absorbing ideal is a graded ideal in which we test the $n$-absorbing property only on homogeneous elements, whereas an $n$-absorbing ideal is defined using arbitrary ring elements. Thus, these two notions should not be identified without additional hypotheses. In particular, the graded hypothesis on the ideal is essential: if $I$ is a graded ideal and is an $n$-absorbing ideal of $R$, then $I$ is a graded $n$-absorbing ideal. However, the converse need not hold. An example of a graded $2$-absorbing ideal that is not a $2$-absorbing ideal is given in \cite{al2019graded}.
\end{remark}

\begin{example}[An $n$-absorbing ideal need not be graded]
Consider the ideal
\[
I=(x^2+y^2+xy+1)
\]
in $\mathbb Z[x,y]$. The generator is not homogeneous, so $I$ is not a graded ideal with respect to the standard grading. Since $x^2+y^2+xy+1$ is irreducible and $\mathbb Z[x,y]$ is a UFD, $I$ is prime. Hence $I$ is an $n$-absorbing ideal for every positive integer $n$, but it is not a graded $n$-absorbing ideal.
\end{example}

Structural properties of graded weakly 2-absorbing ideals have been studied in several algebraic settings, including primary ideals \cite{badawi_2-absorbing_2014, wasadikar_properties_2019}, commutative semigroups \cite{yousefian_darani_2-absorbing_2013}, and graded rings \cite{naghani2016graded, al2019graded, al2017graded}. Motivated by these developments, we investigate whether the relationship between graded weakly $n$-absorbing and graded $n$-absorbing ideals exhibits the same behavior for arbitrary absorbing index $n$. This distinction is essential for the remainder of our paper, since the combinatorial constructions developed later occur in reduced graded polynomial rings, where the two notions coincide. 

\subsection{Nilpotent Homogeneous Components}
Existing results for graded weakly 2-absorbing ideals show that the difference between graded weakly 2-absorbing and graded 2-absorbing ideals is controlled by nilpotent homogeneous components. We generalize this phenomenon by showing that the same dichotomy persists for arbitrary absorbing index $n$.

\begin{theorem}\label{Thm:PrimaryResult1}
Let $\displaystyle I = \bigoplus_{g \in G} I_{g}$ be a graded weakly $n$-absorbing ideal of $G(R)$. Then exactly one of the following holds:
\begin{enumerate}
    \item $I$ is a graded $n$-absorbing ideal.
    \item $I_{g}^{n+1} = 0$ for every $g \in G$.
\end{enumerate}
\end{theorem}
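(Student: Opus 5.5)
The plan is to prove the contrapositive form: assume $I$ is graded weakly $n$-absorbing but \emph{not} graded $n$-absorbing, and deduce $I_g^{n+1}=0$ for every $g\in G$. (The word ``exactly'' cannot be taken literally. For instance, $I=0$ in a graded domain satisfies both alternatives. So what I will actually prove is that at least one of (1) and (2) holds.)

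Since $I$ fails to be graded $n$-absorbing, there are $a_1,\dots,a_{n+1}\in h(R)$ with $a_1\cdots a_{n+1}\in I$ but $a_1\cdots\widehat{a_i}\cdots a_{n+1}\notin I$ for all $i$. If the product were nonzero, the weak hypothesis would put one of the $n$-fold subproducts in $I$. Hence $a_1\cdots a_{n+1}=0$, and I call $(a_1,\dots,a_{n+1})$ a \emph{homogeneous $n$-zero} of $I$.

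The core of the argument is the graded analogue of the Anderson--Badawi ``$n$-zero lemma''. Claim: for a homogeneous $n$-zero and any choice of homogeneous $x_{i}\in I$ for $i$ in a nonempty subset $S\subseteq\{1,\dots,n+1\}$, the product obtained by replacing $a_i$ with $x_i$ for all $i\in S$ is still $0$. I prove this by induction on $|S|$.

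For $|S|=1$, say $S=\{1\}$, consider $(a_1+x_1)a_2\cdots a_{n+1}=x_1a_2\cdots a_{n+1}\in I$. If this is nonzero, the weak property applied to the factors $a_1+x_1,a_2,\dots,a_{n+1}$ yields one of two outcomes. Either $a_2\cdots a_{n+1}\in I$, which is excluded. Or $(a_1+x_1)\prod_{j\ne 1,k}a_j\in I$ for some $k$, and since $x_1\in I$ this gives $\prod_{j\neq k}a_j\in I$, again excluded. So the product is $0$. The inductive step is identical: expand $\prod_{i\in S}(a_i+x_i)\prod_{i\notin S}a_i$. By the induction hypothesis every mixed term with fewer than $|S|$ of the $x$'s vanishes, so only the fully replaced term survives. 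The same dichotomy then forces that term to be $0$. Taking $S=\{1,\dots,n+1\}$ gives $x_1\cdots x_{n+1}=0$ for all homogeneous $x_i\in I$, and in particular $I_g^{n+1}=0$ for every $g$ (indeed $I^{n+1}=0$, as $I$ is generated by homogeneous elements).

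The main obstacle is that the weak hypothesis may only be applied to homogeneous factors. The element $a_1+x_1$ is homogeneous only when $\deg x_1=\deg a_1$, so the argument above only directly controls the replacement of $a_i$ by $x_i\in I_{\deg a_i}$. I would first try to run the induction with $x_i$ restricted to $I_{\deg a_i}$, which kills the products $I_{\deg a_1}\cdots I_{\deg a_{n+1}}$. Then I would try to transport this to an arbitrary degree $g$, either by varying the $n$-zero (multiplying a factor by a suitable homogeneous unit or element to shift its degree while preserving the non-membership conditions) or by using the componentwise Proposition from the introduction to localize the failure of absorption at a single component. If neither works in general, the fallback is to record what the argument does establish: $\prod_i I_{\deg a_i}=0$ for every homogeneous $n$-zero, which is the natural graded analogue of the ungraded statement. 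I would then note that the blanket conclusion for all $g$ needs an additional hypothesis, for example that $h(R)$ contains units of every degree, as in a strongly graded ring.
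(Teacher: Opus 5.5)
The gap you flag in your last paragraph is real, and it cannot be closed. The claim in your third paragraph for arbitrary homogeneous $x_i\in I$ is false, not merely unproved, and so is the theorem itself.

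\emph{Counterexample.} Let $K$ be a field and $R=K[t]^{n+1}$, $\mathbb{Z}$-graded by $t$-degree, so $R_0=K^{n+1}$ and $R_d=K^{n+1}t^d$ for $d\geq 0$. Let $I=tR$, so $I_0=0$.
\begin{enumerate}
\item $I$ is graded weakly $n$-absorbing. If $v_1,\dots,v_{n+1}\in h(R)$ and $0\neq v_1\cdots v_{n+1}\in I$, the product cannot lie in degree $0$. So some $v_k$ has positive degree and hence lies in $I$, and omitting any $v_j$ with $j\neq k$ leaves a product in $I$.
\item $I$ is not graded $n$-absorbing. With $e_1,\dots,e_{n+1}$ the primitive idempotents of $R_0$ and $u_i=1-e_i$, we have $u_1\cdots u_{n+1}=0$ while $u_1\cdots\widehat{u_k}\cdots u_{n+1}=e_k\notin I$.
\item Alternative (2) fails, since $0\neq t^{n+1}\in I_1^{n+1}$.
\end{enumerate}
Your replacement claim already fails at $|S|=1$: $t\,u_2\cdots u_{n+1}=te_1\neq 0$. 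For $n=1$, $tR$ in $K[t]\times K[t]$ is graded weakly prime, not graded prime, and $(tR)_1^2\neq 0$. Since $R$ is reduced, this also contradicts the remark following the theorem.

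Neither of your transport devices can work here. Every homogeneous $n$-zero of $I$ lies in degree $0$, because a factor of positive degree is already in $I$. In degree $0$ we have $I_0=0$, all homogeneous units have degree $0$, and multiplying a factor by a homogeneous element of positive degree puts it into $I$. So your corrected reading, ``at least one of (1) and (2) holds,'' fails too, although your observation that ``exactly one'' is wrong (e.g.\ $I=0$ in a graded domain) is correct.

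The paper's proof sketch contains exactly the flaw you anticipated. It takes $a_1\in I_g$ for a degree $g$ with $I_g^{n+1}\neq 0$ and applies the weak hypothesis to $x_1\cdots x_n(a_1+x_{n+1})$. But $a_1+x_{n+1}$ is homogeneous only if $\deg a_1=\deg x_{n+1}$. In the example above, with $x_i=u_i$ and $a_1=t$, the dichotomy the sketch derives is false.

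Your fallback is the correct graded statement, and your induction does prove it. With $x_i\in I_{\deg a_i}$, each $a_i+x_i$ is homogeneous and the mixed terms vanish by induction. Modulo $I$, every $n$-fold subproduct of the modified factors reduces to the corresponding subproduct of the $a_i$. This gives $I_{\deg a_1}\cdots I_{\deg a_{n+1}}=0$ for every homogeneous $n$-zero.

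Your unit-shifting repair works precisely when $h(R)$ contains a unit of every degree. Replacing each $a_i$ by $a_iu_i$, with $u_i$ a homogeneous unit, preserves the $n$-zero property. That yields $I_{g_1}\cdots I_{g_{n+1}}=0$ for all $g_1,\dots,g_{n+1}$, and hence $I^{n+1}=0$. Being strongly graded does not by itself provide such units, so state the extra hypothesis as the existence of homogeneous units in every degree rather than as strong gradedness.
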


The full proof proceeds by induction on $n$. Since each inductive step repeats the same argument with additional notation, we present only the essential mechanism and omit the routine bookkeeping.

\begin{proof}[Proof Sketch]
 Let $I_{g}^{n+1} \neq (0)$ for some $g \in G$. Let $x_{1}, x_{2}, \dots, x_{n+1} \in h(R)$ be such that $x_{1}x_{2}\dots x_{n+1} \in I$. If $x_{1}x_{2}\dots x_{n+1} \neq 0$, and since $I$ is graded weakly $n$-absorbing, we have $x_{1}x_{2}\dots \reallywidehat{x_{i}}\dots x_{n}x_{n+1} \in I$. Thus $I$ is a graded $n$-absorbing ideal.

Now consider the case when $x_{1}x_{2}\dots x_{n+1} = 0$. Without loss of generality, let $x_{n+1}$ be excluded. If $(x_{1}x_{2}\dots x_{n})I_{g} \neq (0)$ then there exists an element $a_{1} \in I_{g}$ such that $x_{1}x_{2}\dots x_{n}a_{1} \neq 0$. Since
\begin{align*}
    0 \neq x_{1}x_{2}\dots x_{n}a_{1} & = x_{1}x_{2}\dots x_{n}a_{1} + x_{1}x_{2}\dots x_{n}x_{n+1} \\
    & = (x_{1}x_{2}\dots x_{n})(a_{1}+x_{n+1}),
\end{align*}
we must have $(x_{1}x_{2}\dots x_{n})(a_{1}+x_{n+1}) \in I$. Thus, we must have that either 
\begin{enumerate}
    \item $x_{1}x_{2}\dots x_{n} \in I$, or
    \item $(x_{1}x_{2}\dots \reallywidehat{x_{i}} \dots x_{n-1}x_{n})(a_{1}+x_{n+1}) \in I$, for some $i = 1, 2, \dots, n$.
\end{enumerate}
If the first case holds, then $I$ is a graded $n$-absorbing as desired. Otherwise, if 
\begin{equation*}
    (x_{1}x_{2}\dots \reallywidehat{x_{i}} \dots x_{n-1}x_{n})(a_{1}+x_{n+1}) \in I,
\end{equation*}
then it follows that
\begin{align*}
    (x_{1}x_{2}\dots \reallywidehat{x_{i}} \dots x_{n-1}x_{n})(a_{1}+x_{n+1}) = \ &(x_{1}x_{2}\dots \reallywidehat{x_{i}} \dots x_{n-1}x_{n})a_{1} \\
    & + (x_{1}x_{2}\dots \reallywidehat{x_{i}} \dots x_{n-1}x_{n})x_{n+1}. 
\end{align*}
Thus $x_{1}x_{2} \dots \reallywidehat{x_{i}} \dots x_{n}x_{n+1} \in I$ as desired, and so $I$ is $n$-absorbing graded ideal. Now we may assume that $(x_{1}x_{2} \dots \reallywidehat{x_{i}} \dots x_{n}x_{n+1})I_{g} = (0).$

Supposing that $(x_{1}x_{2} \dots \reallywidehat{x_{1}} \dots x_{n}x_{n+1})I_{g} = (0)$ and, without loss of generality, that $(x_{1}x_{2}\dots x_{n-2}x_{n-1})I_{g}^{2} \neq 0$, then there exist elements $a_{1},a_{2} \in I_{g}$ such that $(x_{1}\dots x_{n-1})a_{1}a_{2} \neq 0.$

Arguing as before, we obtain
\begin{align*}
    0 & \neq (x_{1}x_{2}\dots x_{n-2}x_{n-1})a_{1}a_{2} \\
    & = x_{1}x_{2}\dots x_{n-2}x_{n-1}a_{1}a_{2} + (x_{1}x_{2}\dots x_{n}\reallywidehat{x_{n+1}}) + (x_{1}x_{2}\dots \reallywidehat{x_{n}}x_{n+1}) \\
    & = (x_{1}x_{2}\dots x_{n-2}x_{n-1})(a_{1}+x_{n})(a_{2}+x_{n+1)}.
\end{align*}
Hence, it is the case that either
\begin{enumerate}
    \item $(x_{1}x_{2}\dots x_{n-1})(a_{1}+x_{n}) \in I$, or
    \item $(x_{1}x_{2}\dots x_{n-1})(a_{2}+x_{n+1}) \in I$, or
    \item $(x_{1}x_{2}\dots \reallywidehat{x_{i}} \dots \reallywidehat{x_{j}} \dots x_{n-1})(a_{1}+x_{n})(a_{2}+x_{n+1}) \in I$.
\end{enumerate}
In case 1, after distributing we can see that $x_{1}x_{2}\dots x_{n-1}x_{n} \in I$, and thus $I$ is a graded $n$-absorbing ideal. Likewise, in case 2, after distributing we can see that $x_{1}x_{2}\dots x_{n-1}x_{n+1} \in I$, and again, $I$ is a graded $n$-absorbing ideal. Finally, in case 3, after distributing we can see that
\begin{align*}
    & (x_{1}x_{2}\dots \reallywidehat{x_{i}} \dots \reallywidehat{x_{j}} \dots x_{n-1})(a_{1}+x_{n})(a_{2}+x_{n+1}) \\
    & = (x_{1}x_{2}\dots \reallywidehat{x_{i}} \dots \reallywidehat{x_{j}} \dots x_{n-1})(a_{1}a_{2}+ a_{1}x_{n+1}+a_{2}x_{n}+x_{n}x_{n+1}) \\
    & = (x_{1}x_{2}\dots \reallywidehat{x_{i}} \dots \reallywidehat{x_{j}} \dots x_{n-1})a_{1}a_{2} + (x_{1}x_{2}\dots \reallywidehat{x_{i}} \dots \reallywidehat{x_{j}} \dots x_{n-1})a_{1}x_{n+1} \\
    & + (x_{1}x_{2}\dots \reallywidehat{x_{i}} \dots \reallywidehat{x_{j}} \dots x_{n-1})a_{2}x_{n} + (x_{1}x_{2}\dots \reallywidehat{x_{i}} \dots \reallywidehat{x_{j}} \dots x_{n-1})x_{n}x_{n+1}.
\end{align*}
Since $(x_{1}x_{2}\dots \reallywidehat{x_{i}} \dots \reallywidehat{x_{j}} \dots x_{n-1})x_{n}x_{n+1} \in I$, we again conclude that $I$ is a graded $n$-absorbing ideal. Now we may assume $(x_{1}x_{2}\dots \reallywidehat{x_{i}} \dots \reallywidehat{x_{j}} \dots x_{n}x_{n+1})I_{g}^{2} = (0)$.

Continuing in this manner, each successive step either establishes the graded $n$-absorbing property or forces an additional power of $I_{g}$ to vanish. After at most $n+1$ iterations, one concludes either that $I$ is graded $n$-absorbing or that $I_{g}^{n+1}=0$. The induction formalizes this iterative argument.

\end{proof}

\noindent Theorem \ref{Thm:PrimaryResult1} immediately implies that if a graded ring has no nonzero nilpotent homogeneous elements, then every graded weakly $n$-abosring ideal is graded $n$-absorbing. In particular, this applies to graded polynomial rings over fields. Consequently, throughout the principal monomial setting developed later, it is sufficient to work exclusively with graded $n$-absorbing ideals.

Recall that the graded radical $I$ of $G(R)$, denoted by $\operatorname{Grad}(I)$, is the set of all $x \in R$ such that for each $g \in G$, there exists $n_{g} > 0$ with the property that $x_{g}^{n_{g}} \in I$. 

\begin{corollary}\label{cor:primaryresult1}
Let $\displaystyle I = \bigoplus_{g \in G} I_{g}$ be a graded weakly $n$-absorbing ideal which is not a graded $n$-absorbing ideal of $G(R)$. Then $\operatorname{Grad}(I)=\operatorname{Grad}(0)$.
\end{corollary}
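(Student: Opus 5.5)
By Theorem \ref{Thm:PrimaryResult1}, since $I$ is graded weakly $n$-absorbing but not graded $n$-absorbing, we are in the second alternative: $I_{g}^{n+1}=0$ for every $g\in G$. The plan is to show both inclusions of $\operatorname{Grad}(I)=\operatorname{Grad}(0)$ directly from the definition of the graded radical recalled just before the statement. A homogeneous element $x$ lies in the graded radical of an ideal $J$ exactly when some power of $x$ lies in $J$. A general element lies there when each of its homogeneous components does.

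The inclusion $\operatorname{Grad}(0)\subseteq \operatorname{Grad}(I)$ is immediate. If $x\in \operatorname{Grad}(0)$, then for each $g$ some power $x_g^{n_g}$ equals $0$. Since $0\in I$, this gives $x\in\operatorname{Grad}(I)$.

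For the reverse inclusion, take $x\in\operatorname{Grad}(I)$ and fix $g\in G$. Then $y:=x_g$ is homogeneous, and $y^{k}\in I$ for some $k>0$. Since $I$ is graded and $y^k$ is homogeneous of degree $kg$, we have $y^{k}\in I_{kg}$. Now apply $I_{kg}^{n+1}=0$ from Theorem \ref{Thm:PrimaryResult1}. Interpreting $I_{kg}^{n+1}$ as the set of products of $n+1$ elements of $I_{kg}$, the product $(y^{k})^{n+1}$ is such a product. Hence $y^{k(n+1)}=0$, so $y\in\operatorname{Grad}(0)$ with exponent $k(n+1)$. As $g$ was arbitrary, $x\in\operatorname{Grad}(0)$.

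The only delicate point is reading $I_g^{n+1}=0$ correctly. Whether it means the ideal generated by $(n+1)$-fold products or just the set of such products, it forces $a^{n+1}=0$ for each individual $a\in I_g$, and that is all the argument uses. There is also a degree bookkeeping point: the relevant component is $I_{kg}$, not $I_g$. This causes no difficulty, because Theorem \ref{Thm:PrimaryResult1} gives vanishing of $(n+1)$-st powers for \emph{every} homogeneous component.
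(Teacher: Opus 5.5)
Your proof is correct and is essentially the paper's argument: both use the vanishing $I_g^{n+1}=0$ from Theorem~\ref{Thm:PrimaryResult1} together with the componentwise definition of $\operatorname{Grad}$. The only difference is organizational. The paper first shows $I\subseteq\operatorname{Grad}(0)$ and then applies $\operatorname{Grad}$ via $\operatorname{Grad}(\operatorname{Grad}(0))=\operatorname{Grad}(0)$, whereas you do that idempotence step directly by placing $x_g^{k}$ in $I_{kg}$ and raising it to the $(n+1)$-st power.
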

\begin{proof}
It is sufficient to show that $\operatorname{Grad}(I) \subseteq \operatorname{Grad}(0)$ since the reverse inclusion is immediate. Let $x \in I$. $I$ is a graded ideal, so we can write
\[
x=\sum_{g\in G}x_g,
\]
where $x_{g}\in I_g$.  Since $I$ is not graded $n$-absorbing, then by Theorem \ref{Thm:PrimaryResult1}, $I_{g}^{n+1} = (0)$ for all $g \in G$. Thus it follows that since $I$ is graded, each homogeneous component $x_g \in I_g$, so $x_g^{n+1} \in I_g^{n+1} = (0)$.

Thus every homogeneous component $x_g$ belongs to
$\operatorname{Grad}(0)$. Since $\operatorname{Grad}(0)$ is a graded ideal,
it follows that
\[
x\in\operatorname{Grad}(0).
\]
Hence
\[
I\subseteq\operatorname{Grad}(0).
\]
Applying the graded radical operator yields
\[
\operatorname{Grad}(I)
\subseteq
\operatorname{Grad}(\operatorname{Grad}(0))
=
\operatorname{Grad}(0),
\]
completing the proof.

\end{proof} 

Theorem \ref{Thm:PrimaryResult1} and Corollary \ref{cor:primaryresult1} show that the distinction between graded weakly $n$-absorbing ideals and graded $n$-absorbing ideals is entirely determined by the nilpotent behavior of the homogeneous components. 

Indeed, if a graded weakly $n$-absorbing ideal fails to be graded $n$-absorbing, then every homogeneous component satisfies $I_{g}^{n+1}=0$, and consequently its graded radical coincides with the graded radical of the zero ideal. Thus, the distinction between the two notions is completely governed by the nilpotent behavior of the homogeneous components.

\begin{example}[A graded weakly $n$-absorbing ideal that is not $n$-absorbing]
Let $R=F[x_{1},x_{2},\dots,x_{n+1}]/(x_{1}x_{2}\cdots x_{n+1})$ with the standard grading and take $I = (0)$. The zero ideal is homogeneous, so it is a graded ideal. Since there are no nonzero elements in $I$, the weakly $n$-absorbing condition is satisfied vacuously. However, $I$ is not $n$-absorbing because $x_{1}x_{2}\cdots x_{n+1} = 0 \in I$, while every $n$-fold product obtained by omitting one variable is nonzero in $R$.
    
\end{example}

\begin{example}[Graded weakly $n$-absorbing is graded $n$-absorbing ]
Let 
\[
R=F[x_{1},x_{2},\dots,x_{m}]
\]
 and consider the principal monomial ideal $I = (x_{1}x_{2}\cdots x_{n})$. We first observe that $I$ is a graded weakly $n$-absorbing ideal of $R$. 

Let $f_{1},f_{2},\dots,f_{n+1} \in h(R)$ satisfy
\[
f_{1}f_{2}\cdots f_{n+1} \in I.
\]
Then 
\[
x_{1}x_{2}\cdots x_{n} \mid f_{1}f_{2}\cdots f_{n+1}.
\]
Since each variable $x_{j}$ must divide at least one of the homogeneous factors $f_{i}$, the $n$-variables $x_{1},x_{2},\dots,x_{n}$ are distributed among the $n+1$ factors. Since the monomial $x_{1}x_{2}\cdots x_{n}$ contains exactly $n$-distinct variables while there are $n+1$ homogeneous factors, at least one factor contributes none of these required variables. Removing that factor preserves divisibility by $x_{1}x_{2}\cdots x_{n}$, so
\[
f_{1}\cdots \reallywidehat{f_{i}} \cdots f_{n+1} \in I.
\]
Therefore, $I$ is a graded $n$-absorbing ideal. 
\end{example}

The preceding examples illustrate the two alternatives of Theorem \ref{Thm:PrimaryResult1}. The first example shows that graded weakly $n$-absorbing ideals need not be graded $n$-absorbing in the presence of nontrivial zero products. In contrast, the second example demonstrates that this pathology disappears in polynomial rings over fields, where principal monomial ideals satisfy both notions simultaneously. Since the combinatorial model developed later is constructed in this setting, we may work entirely within graded $n$-absorbing ideals. 

\subsection{n-Absorbing Homogeneous Components}

The graded $n$-absorbing condition is determined by the interaction between the multiplicative structure of homogeneous elements and the additive decomposition of the grading. In this section, we introduce the notion of an $n$-absorbing homogeneous component and establish how componentwise $n$-absorbing behavior influences graded weakly $n$-absorbing ideals.

The motivation for this definition comes from the graded $2$-absorbing subgroup framework introduced by Naghani and Moghimi \cite{naghani2016graded}. However, the original formulation does not fully account for the behavior of degrees under multiplication. Indeed, removing one homogeneous factor from a product decreases its degree, so the resulting product generally cannot lie in the same homogeneous component. 

\begin{example}[Counterexample to Definition 2 of \cite{naghani2016graded}]
    Let $R=F[x,y]$ and take $a=x$, $b=y$, and $c=x$. Then $a,b,c \in h(R)$ and $abc=x^{2}y \in R_{3}$. In their original version of Definition 2, we would require one of $ab = xy$, $ac=x^{2}$, or $bc = xy$ to lie in $I_{3}$. But $xy, x^{2} \in R_{2}$, rather than $R_{3}$. Since $I_{3} \subseteq R_{3}$, neither $xy$ nor $x^{2}$ can belong to $I_{3}$. 
\end{example}

Hence the original formulation of Definition 2 cannot be satisfied in this case and the argument used in the proof of Proposition 1 of \cite{naghani2016graded} relies on the same incompatibility.

To remedy this issue, we reformulate the definition so that the absorbing condition is compatible with the grading while preserving the intended local-to-global correspondence between the additive structure and graded $n$-absorbing ideals. 

\begin{definition}\label{def:nabs-component}
Let $R=\bigoplus_{g\in G}R_{g}$ be a graded ring and let $I = \bigoplus_{g \in G} I_{g}$ be a graded ideal of $R$. For $g \in G$, we say that the homogeneous component $I_{g}$ is an \textbf{\textit{n}-absorbing homogeneous component} of $I$ if $I_{g} \neq R_{g}$ and whenever $x_{1},x_{2},\ldots,x_{n+1}\in h(R)$ for $x_{1}x_{2}\cdots x_{n+1} \in I_{g}$, then $x_1x_2\cdots\reallywidehat{x_i}\cdots x_{n+1}\in I$ for some $i \in \{1, 2, \dots, n+1\}$.
\end{definition}

Using our new construction, we now present the new version of Proposition 1 of \cite{naghani2016graded} with updated proof.

\begin{proposition}\label{prop:components}
Let $I=\bigoplus_{g\in G}I_{g}$ be a graded ideal of $R$. If every homogeneous component $I_{g}$ of $I$ is an $n$-absorbing homogeneous component, then $I$ is a graded $n$-absorbing ideal of $R$.
\end{proposition}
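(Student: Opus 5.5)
The argument is essentially a direct unwinding of Definition~\ref{def:nabs-component}. Two things must be checked. First, $I$ is proper. Second, the absorbing condition holds for homogeneous products.

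For the absorbing condition, take $a_1,\dots,a_{n+1}\in h(R)$ with $a_1a_2\cdots a_{n+1}\in I$. Say $a_i\in R_{g_i}$. Then the product $p=a_1\cdots a_{n+1}$ is homogeneous, lying in $R_{g}$ with $g=g_1+\cdots+g_{n+1}$, since $R_{g}R_{h}\subseteq R_{g+h}$. Because $I$ is graded, $I=\bigoplus_g(I\cap R_g)$. So a homogeneous element of $I$ of degree $g$ lies in $I\cap R_g=I_g$. Hence $p\in I_g$. (If $p=0$, then $p$ lies in every $I_h$, and we may use any single component.) Now apply the hypothesis that $I_g$ is an $n$-absorbing homogeneous component. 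It gives some $i$ with $a_1\cdots\widehat{a_i}\cdots a_{n+1}\in I$. Note that the conclusion of Definition~\ref{def:nabs-component} lands in $I$ rather than in $I_g$. This is exactly the correction that makes the argument go through despite the degree drop illustrated in the counterexample to \cite{naghani2016graded}. So the graded $n$-absorbing condition holds.

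Properness is the only step needing care, and I expect it to be the main (minor) obstacle. The hypothesis $I_g\neq R_g$ for all $g$ gives $I\neq R$ as long as some $R_g\neq 0$. Indeed, if $I=R$ then $I_g=I\cap R_g=R_g$ for every $g$, contradicting the hypothesis at any single $g$. The degenerate zero ring is excluded by the standing assumption that rings have identity and ideals are proper. Even more simply, $I_e\neq R_e$ already forces $1\notin I$, since $1\in R_e$. Thus $I$ is a proper graded ideal satisfying the absorbing condition, so $I$ is graded $n$-absorbing.
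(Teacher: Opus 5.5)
Your proposal is correct and follows essentially the same route as the paper: the homogeneous product lies in some component $I_g$, and the component hypothesis, whose conclusion lands in $I$ rather than $I_g$, yields an $n$-fold subproduct in $I$. You also verify that $I$ is proper (from $I_g \neq R_g$) and treat the zero product, where the paper's ``unique $g$'' is not literally unique; the paper's proof leaves both points implicit.
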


\begin{proof}
Let $x_1,x_2,\ldots,x_{n+1}\in h(R)$ be such that $x_{1}x_{2}\cdots x_{n+1}\in I.$ Since each $x_i$ is homogeneous, the product $x_{1}x_{2}\cdots x_{n+1}$ is also homogeneous. Hence there exists a unique $g\in G$ such that $x_1x_2\cdots x_{n+1}\in I_g.$ By hypothesis, $I_{g}$ is an $n$-absorbing homogeneous component of $I$. Therefore, for some $i \in \{ 1, 2, \dots, n+1\}$, we have that $x_1x_2\cdots\reallywidehat{x_i}\cdots x_{n+1}\in I.$ Thus, whenever the product of $n+1$ homogeneous elements belongs to $I$, the product obtained by omitting one of the factors also belongs to $I$. Hence $I$ is a graded $n$-absorbing ideal of $R$.
\end{proof}

\noindent This proposition tells us that if each homogeneous component of $I_{g}$ satisfies the $n$-absorbing homogeneous component condition, then the entire graded ideal automatically inherits the graded $n$-absorbing property. That is, instead of verifying the graded $n$-absorbing condition on the whole ideal $I$, we can instead verify it component-by-component.

\begin{example}
Let $R=F[x_1,\dots,x_n]$ be the standard graded polynomial ring and consider the
principal monomial ideal
\[
I=(x_1x_2\cdots x_n).
\]
For each $d\geq n$, let $I_{d}=I\cap R_{d}$ denote the degree-$d$ homogeneous
component of $I$.

We claim that each homogeneous component $I_d$ is an $n$-absorbing homogeneous component of $I$. Suppose that
\[
f_{1},f_{2},\ldots,f_{n+1}\in h(R)
\]
satisfy
\[
f_{1}f_{2}\cdots f_{n+1}\in I_{d}.
\]
Then
\[
x_1x_2\cdots x_n\mid f_1f_2\cdots f_{n+1}.
\]
Hence, for every $j\in\{1,\ldots,n\}$, the variable $x_j$ divides at least one
of the homogeneous factors $f_i$. Since there are only $n$ variables but
$n+1$ factors, at least one factor can be removed without losing divisibility
by $x_1x_2\cdots x_n$. Therefore, for some $i$,
\[
f_1\cdots\widehat{f_i}\cdots f_{n+1}\in I.
\]
Thus each homogeneous component $I_{d}$ satisfies the $n$-absorbing homogeneous component condition.
\end{example}

\subsection{Quotient Stability and Absorbing Hierarchies}

We extend structural results known for graded 2-absorbing ideals to the graded $n$-absorbing setting. In particular, we investigate quotient stability and the relationship between  graded $n$-absorbing ideals with different absorbing indices.

\subsubsection{Quotients of Graded Weakly n-Absorbing Ideals}
\begin{proposition} \label{prop:quotients}
    Let $R = \bigoplus_{g \in G} R_{g}$ be a graded commutative ring, and let $J \subseteq I$ be proper graded ideals of $R$. Then the following hold:
\begin{enumerate}
    \item If $I$ is a graded weakly $n$-absorbing ideal of $R$, then $I/J$ is a graded weakly $n$-absorbing ideal of $R/J$. 
    \item If $J$ is a graded weakly $n$-absorbing ideal of $R$ and $I/J$ is a graded weakly $n$-absorbing ideal of $R/J$, then $I$ is a graded weakly $n$-absorbing ideal of $R$.
\end{enumerate} 
\end{proposition}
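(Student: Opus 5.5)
The plan is to argue directly from the definitions. The only structural input is the standard fact that, since $J$ is a graded ideal, $R/J$ carries the induced grading
\[
(R/J)_g=(R_g+J)/J, \qquad g\in G.
\]
Under this grading, every homogeneous element of $R/J$ of degree $g$ is the image $\bar a=a+J$ of some $a\in R_g$, so it has a homogeneous lift. Moreover, $I/J$ is a graded ideal of $R/J$, and $I/J\neq R/J$ exactly because $I\neq R$. I will check these facts first, because everything else rests on them.

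For (1), let $\bar a_1,\dots,\bar a_{n+1}\in h(R/J)$ satisfy $0\neq \bar a_1\cdots\bar a_{n+1}\in I/J$, and choose homogeneous lifts $a_i\in h(R)$.
\begin{itemize}
\item Since $J\subseteq I$, the condition $a_1\cdots a_{n+1}+J\in I/J$ gives $a_1\cdots a_{n+1}\in I$.
\item Since the image of the product is nonzero, $a_1\cdots a_{n+1}\notin J$, and in particular $a_1\cdots a_{n+1}\neq 0$.
\item Because $I$ is graded weakly $n$-absorbing, some $a_1\cdots\widehat{a_i}\cdots a_{n+1}\in I$.
\item Passing to the quotient gives $\bar a_1\cdots\widehat{\bar a_i}\cdots\bar a_{n+1}\in I/J$.
\end{itemize}
This proves (1).

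For (2), first note that $I$ is proper because $I/J$ is proper. Let $a_1,\dots,a_{n+1}\in h(R)$ with $0\neq a_1\cdots a_{n+1}\in I$, and split into two cases.
\begin{itemize}
\item If $a_1\cdots a_{n+1}\in J$, then the product is a nonzero element of $J$. Since $J$ is graded weakly $n$-absorbing, some $n$-fold subproduct lies in $J\subseteq I$.
\item If $a_1\cdots a_{n+1}\notin J$, then the images $\bar a_i$ are homogeneous in $R/J$, and their product is a nonzero element of $I/J$. The weakly $n$-absorbing property of $I/J$ gives $\bar a_1\cdots\widehat{\bar a_i}\cdots\bar a_{n+1}\in I/J$. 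Since $J\subseteq I$, the preimage of $I/J$ in $R$ is exactly $I$, so $a_1\cdots\widehat{a_i}\cdots a_{n+1}\in I$.
\end{itemize}
This proves (2).

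I expect no genuine obstacle; the proof is essentially bookkeeping. The one point needing care is the use of the induced grading: homogeneous elements of $R/J$ must have homogeneous preimages in $R$, which requires $J$ to be graded. The other point is the nonzero-product hypothesis, which has to be transported in both directions. In (1), "nonzero in $R/J$" gives "nonzero in $R$" for free. In (2), a product can be nonzero in $R$ yet vanish in $R/J$, which is exactly why the case split through the hypothesis on $J$ is needed.
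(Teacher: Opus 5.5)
Your proposal is correct and follows the paper's proof essentially step for step. You use the induced grading $(R/J)_g=(R_g+J)/J$ to lift homogeneous elements, argue part (1) directly, and split part (2) on whether the product lies in $J$. You are also slightly more explicit than the paper in two places: you note that a product not in $J$ is in particular nonzero in $R$, and you verify that $I/J$ is a proper graded ideal.
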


\begin{proof}

\

\begin{enumerate}
    \item Suppose that $I$ is a proper graded weakly $n$-absorbing ideal of $R$. Since $J$ is graded, the quotient $R/J$ inherits a $G$-grading given by $(R/J)_{g} = (R_{g}+J)/J$. Hence every homogeneous element of $R/J$ can be represented by a homogeneous element of $R$. Let $x_{1}+J, x_{2}+J,\dots,x_{n+1}+J$ be homogeneous elements of $R/J$ with representatives $x_{i} \in h(R)$ satisfying  $0\neq (x_1+J)(x_2+J)\cdots(x_{n+1}+J)\in I/J$.  Since the product is nonzero in $R/J$ we have $x_{1}x_{2}\cdots x_{n+1} \notin J$.  Also, $x_{1}x_{2}\cdots x_{n+1} \in I$. Since $I$ is graded weakly $n$-absorbing, there exists some index $i$ such that $x_1\cdots\widehat{x_i}\cdots x_{n+1}\in I$. Therefore, $(x_1+J)\cdots\widehat{(x_i+J)}\cdots(x_{n+1}+J)\in I/J,$ and thus $I/J$ is a graded weakly $n$-absorbing ideal of $R/J$.

    \item Suppose that $J$ is a graded weakly $n$-absorbing ideal of $R$ and $I/J$ is a graded weakly $n$-absorbing ideal of $R/J$. Let $x_{1},x_{2},\dots,x_{n+1} \in h(R)$ such that $0\neq x_1x_2\cdots x_{n+1}\in I$. If $x_{1}x_{2}\cdots x_{n+1} \in J$, then, since $J$ is graded weakly $n$-absorbing, there exists some index $i$ such that $x_1\cdots\widehat{x_i}\cdots x_{n+1}\in J\subseteq I.$ Now suppose that $x_{1}x_{2}\cdots x_{n+1}\notin J.$ Then $0\neq (x_{1}+J)(x_{2}+J)\cdots(x_{n+1}+J)\in I/J.$ Since $I/J$ is graded weakly $n$-absorbing, there exists some index $i$ such that $(x_1+J)\cdots\widehat{(x_i+J)}\cdots(x_{n+1}+J)\in I/J.$ Therefore $x_1\cdots\widehat{x_i}\cdots x_{n+1}\in I,$ and thus $I$ is a graded weakly $n$-absorbing ideal of $R$.
\end{enumerate}
\end{proof}

This proposition establishes quotient stability for graded weakly $n$-absorbing ideals under graded quotient constructions. 

\subsubsection{Monotonicity of Absorbing Indices} In \cite{becker_results_2015}, Becker proved that if $I$ is an $n$-absorbing ideal of a commutative ring $R$, then $I$ is an $m$-absorbing ideal of $R$ for all $m \geq n$. The following theorem shows that Becker's monotonicity result extends to graded $n$-absorbing ideals.

\begin{theorem} \label{thm:mn}
    Let $I$ be a graded $n$-absorbing ideal of $R$. Then $I$ is a graded $m$-absorbing ideal of $R$ for all $m \geq n$.
\end{theorem}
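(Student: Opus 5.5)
It suffices to show that a graded $n$-absorbing ideal is graded $(n+1)$-absorbing; induction on $m$ then gives every $m \geq n$, since the graded $(n+1)$-absorbing ideal feeds into the next step. Properness, $I \neq R$, is part of both definitions and carries over unchanged, so only the absorption condition needs checking.

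For the single step, let $I$ be graded $n$-absorbing and take homogeneous elements $a_1,\dots,a_{n+2}\in h(R)$ with $a_1a_2\cdots a_{n+2}\in I$. The key observation is that the product of two homogeneous elements is again homogeneous: if $a_{n+1}\in R_g$ and $a_{n+2}\in R_h$, then $b:=a_{n+1}a_{n+2}\in R_gR_h\subseteq R_{g+h}$, so $b\in h(R)$. Then $a_1\cdots a_n b\in I$ is a product of $n+1$ homogeneous elements, and the graded $n$-absorbing property applies. Two cases follow.

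\begin{enumerate}
\item If the omitted factor is $b$, then $a_1\cdots a_n\in I$. Multiplying by $a_{n+1}$, and using that $I$ is an ideal, gives $a_1\cdots a_{n+1}\in I$. This is the product with $a_{n+2}$ omitted.
\item If the omitted factor is some $a_i$ with $i\le n$, then $a_1\cdots\widehat{a_i}\cdots a_n\,a_{n+1}a_{n+2}\in I$. This is exactly the $(n+1)$-fold product with $a_i$ omitted.
\end{enumerate}

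In either case some product of $n+1$ of the $a_j$'s, with one factor omitted, lies in $I$. Hence $I$ is graded $(n+1)$-absorbing.

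There is no serious obstacle here. The only point needing care is confirming that grouping two factors keeps us inside $h(R)$, which is what lets the hypothesis (stated only for homogeneous inputs) be applied. This uses the grading axiom $R_gR_h\subseteq R_{g+h}$. The rest is the same argument as Becker's ungraded proof, with the ideal property absorbing the extra factor in the first case.
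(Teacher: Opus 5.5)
Your proof is correct and is essentially the paper's argument. The paper treats general $m \geq n$ in one pass: it groups $y = x_{n+1}\cdots x_{m+1}$ into a single homogeneous factor and splits into the same two cases, whereas you group only two factors and then induct on $m$, which comes to the same thing.
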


\begin{proof}

    Let $m \geq n$ and suppose that
    \[
    x_{1}x_{2}\cdots x_{m+1} \in I
    \]
    for $x_{1},x_{2},\dots,x_{m+1} \in h(R)$. Set
    \[
    y=x_{n+1}x_{n+2}\cdots x_{m+1}.
    \]
    Since the product of homogeneous elements are homogeneous, we have $y \in h(R).$ Therefore
    \[
    x_{1}x_{2}\cdots x_{n} y \in I.
    \]
    Because $I$ is graded $n$-absorbing, there exists some factor among $x_{1},x_{2},\dots, x_{n},y$ whose removal leaves a product contained in $I$.

    If one of the factors $x_{1},x_{2},\dots,x_{n}$ is removed, then we immediately obtain a product of $m$ homogeneous elements contained in $I$. 

    If the factor $y$ is removed, then 
    \[
    x_{1}x_{2}\cdots x_{n} \in I.
    \]
    Multiplying by the remaining homogeneous factors $x_{n+2},\dots,x_{m+1}$ gives
    \[
    x_{1}x_{2}\cdots x_{n}x_{n+2}\cdots x_{m+1} \in I,
    \]
    which is again a product of $m$ homogeneous elements contained in $I$.

    Therefore, whenever a product of $m+1$ homogeneous elements lies in $I$, there exists a product of $m$ homogeneous elements contained in $I$. Hence $I$ is a graded $m$-absorbing ideal of $R$.    
\end{proof}

The monotonicity property shows that the classes of graded $n$-absorbing ideals are not unrelated collections indexed by $n$. Rather, the classes form a nested hierarchy:
\begin{equation*}
   \mathcal{A}_{1} \subseteq \mathcal{A}_{2} \subseteq \dots \subseteq \mathcal{A}_{n} \subseteq \dots,
\end{equation*}
where $\mathcal{A}_{n}$ denotes the collection of graded $n$-absorbing ideals. Consequently, every graded $n$ absorbing ideal remains graded $m$ absorbing for every $m \geq n$. Thus the classes of graded $n$-absorbing ideals form an ascending hierarchy indexed by $n$, where increasing the absorbing index enlarges the class of ideals satisfying the condition. In the next section, this algebraic hierarchy admits a geometric interpretation for principal monomial ideals, where the parameter $n$ becomes a bound on exponent vectors defining a lattice simplex.

\subsection{Invariance and Transfer Properties} 

Having established several intrinsic structural properties of graded $n$-absorbing ideals, we now examine how these properties behave under graded ring homomorphisms. We first show that graded weakly $n$-absorbing ideals are invariant under graded ring isomorphisms and then establish preservation results for graded $n$-absorbing primary ideals under inverse images and suitable homomorphic images.

\subsubsection{Graded Isomorphisms}
 The following proposition shows that the graded weakly $n$-absorbing property is invariant under graded ring isomorphisms.

\begin{proposition} \label{prop:isoinvariance}
    Let $ R=\bigoplus_{g\in G}R_{g}$ and $S=\bigoplus_{g\in G}S_{g}$ be graded rings, and let $\varphi : R \rightarrow S$ be a graded ring isomorphism satisfying $\varphi(R_g)=S_g$ for every $g \in G$. If $I \subseteq R$ is a graded weakly $n$-absorbing ideal, then $\varphi(I)$ is a graded weakly $n$-absorbing ideal of $S$.
\end{proposition}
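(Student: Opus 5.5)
The plan is to verify the definition of graded weakly $n$-absorbing for $\varphi(I)$ directly, transporting everything back to $R$ through $\varphi^{-1}$. The argument has three parts: the routine facts about $\varphi(I)$, the pullback of the absorbing condition, and a check of the nonzero hypothesis.

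First, I would record that $\varphi(I)$ is a proper graded ideal of $S$. It is an ideal because $\varphi$ is a surjective ring homomorphism. It is proper because $\varphi$ is injective and $1\notin I$, so $1=\varphi(1)\notin\varphi(I)$. It is graded because, using $I=\bigoplus_g I_g$ and $\varphi(R_g)=S_g$, we get
\[
\varphi(I)=\bigoplus_{g}\varphi(I_g), \qquad \varphi(I_g)=\varphi(I)\cap S_g .
\]
The last equality uses injectivity: if $\varphi(a)\in S_g$ with $a\in I$, then $a\in R_g$, so $a\in I_g$.

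Second, I would take $y_1,\dots,y_{n+1}\in h(S)$ with $0\neq y_1\cdots y_{n+1}\in\varphi(I)$. Since $\varphi(R_g)=S_g$ and $\varphi$ is bijective, each $y_i=\varphi(x_i)$ for a unique $x_i\in h(R)$, and $x_i$ has the same degree as $y_i$. Multiplicativity gives $\varphi(x_1\cdots x_{n+1})=y_1\cdots y_{n+1}\in\varphi(I)$. Injectivity then gives $x_1\cdots x_{n+1}\in I$, and this product is nonzero because $\varphi(0)=0$.

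Third, since $I$ is graded weakly $n$-absorbing, some $x_1\cdots\widehat{x_i}\cdots x_{n+1}$ lies in $I$. Applying $\varphi$ shows that $y_1\cdots\widehat{y_i}\cdots y_{n+1}\in\varphi(I)$, which completes the verification.

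There is no real obstacle. The only step requiring care is the use of injectivity in the second part, first to pull membership in $\varphi(I)$ back to membership in $I$, and second to preserve the nonzero hypothesis. Both reduce to the fact that $\varphi^{-1}(\varphi(I))=I$ for a bijection.
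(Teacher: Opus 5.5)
Your proposal is correct and matches the paper's argument: pull the homogeneous factors back along $\varphi^{-1}$, which preserves degrees; use injectivity to get $x_1\cdots x_{n+1}\in I$ and $\varphi(0)=0$ to see that this product is nonzero; apply the weakly $n$-absorbing property of $I$; and push the resulting $n$-fold product forward by $\varphi$. Your extra verification that $\varphi(I)$ is a proper graded ideal of $S$ is a worthwhile detail the paper leaves implicit.
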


\begin{proof}
Let $y_{1},y_{2},\dots,y_{n+1} \in h(S)$ such that $0 \neq y_{1}y_{2}\cdots y_{n+1} \in \varphi(I)$. Since $\varphi$ is a graded ring isomorphism, its inverse $\varphi^{-1}: S \rightarrow R$ is also a graded ring isomorphism. Hence, $x_i=\varphi^{-1}(y_i)\in h(R)$ for each $i=1,2,\dots,n+1$. Since $y_{1} \cdots y_{n+1} \in \varphi(I)$, applying $\varphi^{-1}$ gives  $x_1x_2\cdots x_{n+1}\in I.$ Moreover, since \(\varphi\) is injective and $y_{1}y_{2}\cdots y_{n+1}\neq 0,$ it follows that $x_{1}x_{2}\cdots x_{n+1} \neq 0$.

Since $I$ is a graded weakly $n$-absorbing ideal of  $R$, there exists some  $i\in\{1,2,\dots,n+1\}$ such that
$x_1x_2\cdots\widehat{x_i}\cdots x_{n+1}\in I.$ Applying $\varphi$, we obtain
\[
y_1y_2\cdots\widehat{y_i}\cdots y_{n+1}
=
\varphi(x_1x_2\cdots\widehat{x_i}\cdots x_{n+1})
\in \varphi(I).
\]
Therefore, \(\varphi(I)\) is a graded weakly $n$-absorbing ideal of $S$.
\end{proof}

\subsubsection{Graded Homomorphisms and Primary Ideals}

We extend the preceding notions of $n$-absorbing primary and weakly $n$-absorbing primary ideals to the setting of graded rings as follows.

\begin{definition}
    Let $G(R)$ be a $G$-graded ring, and let $I$ be a graded ideal of $G(R)$. Then $I$ is a \textbf{graded n-absorbing primary ideal of $G(R)$}, if $I \neq R$, and whenever $x_1, x_2, \dots, x_{n+1} \in h(R)$ and $x_{1}x_{2}\cdots x_{n+1} \in I$, then either $x_{1}x_{2}\cdots x_{n} \in I $, or $x_{1}\cdots \reallywidehat{x_{i}} \cdots x_{n+1} \in \sqrt{I}$ for some $i \in [1,n]$.
    
\end{definition}  

\begin{definition}
    Let $G(R)$ be a $G$-graded ring, and let $I$ be a graded ideal of $G(R)$. Then $I$ is a \textbf{graded weakly \textit{n}-absorbing primary ideal} of $G(R)$, if $I \neq R$, and whenever $x_1, x_2, \dots, x_{n+1} \in h(R)$ and $ 0 \neq x_{1}x_{2}\cdots x_{n+1} \in I$, then either $x_{1}x_{2}\cdots x_{n} \in I $ , or $x_{1}\cdots \reallywidehat{x_{i}} \cdots x_{n+1} \in \sqrt{I}$ for some $i \in [1, n]$.
    
\end{definition} 

Proposition \ref{prop:isoinvariance} establishes invariance under graded isomorphisms. The next theorem describes the behavior of graded $n$-absorbing primary ideals under graded ring homomorphisms. 

\begin{theorem}
    Let $f:R\rightarrow R'$ be a $G$-graded ring homomorphism of $G$-graded commutative rings. Then 
    \begin{enumerate}
        \item If $I^{\prime}$ is a graded $n$-absorbing primary ideal of $R^{\prime}$, then $f^{-1}(I^{\prime})$ is a graded $n$-absorbing primary ideal of $R$. 
        \item If $f$ is a surjective and $I$ is a graded $n$-absorbing primary ideal of $R$ containing $\ker(f)$, then $f(I)$ is a graded $n$-absorbing primary ideal of $R^{\prime}$. 
    \end{enumerate}
\end{theorem}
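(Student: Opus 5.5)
The plan is to verify both parts directly from the definition of a graded $n$-absorbing primary ideal, using two standard facts. First, a $G$-graded homomorphism $f$ satisfies $f(R_g)\subseteq R'_g$, so it sends homogeneous elements to homogeneous elements. Second, radicals behave well: $f^{-1}(\sqrt{I'})=\sqrt{f^{-1}(I')}$ always holds, and when $f$ is surjective with $\ker(f)\subseteq I$ we have $f(\sqrt{I})=\sqrt{f(I)}$. I will check these radical identities at the start. For the preimage identity, $x^k\in f^{-1}(I')$ if and only if $f(x)^k\in I'$. For the image identity, write any $y\in R'$ as $y=f(x)$. If $y^k\in f(I)$, then $x^k-i\in\ker f\subseteq I$ for some $i\in I$, hence $x^k\in I$.

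For part (1), let $J=f^{-1}(I')$. It is an ideal, and it is proper because $f(1)=1\notin I'$. It is graded: if $x=\sum x_g\in J$, then $f(x)=\sum f(x_g)$ is the homogeneous decomposition of $f(x)$ in the graded ideal $I'$, so each $f(x_g)\in I'$, and therefore each $x_g\in J$. Now take $x_1,\dots,x_{n+1}\in h(R)$ with $x_1\cdots x_{n+1}\in J$. Then $f(x_1),\dots,f(x_{n+1})\in h(R')$ and their product lies in $I'$. So either $f(x_1\cdots x_n)\in I'$, giving $x_1\cdots x_n\in J$, or $f(x_1\cdots\widehat{x_i}\cdots x_{n+1})\in\sqrt{I'}$ for some $i\in[1,n]$, giving $x_1\cdots\widehat{x_i}\cdots x_{n+1}\in f^{-1}(\sqrt{I'})=\sqrt{J}$.

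For part (2), $f(I)$ is an ideal because $f$ is surjective. It is proper: otherwise $1=f(i)$ for some $i\in I$, so $1-i\in\ker f\subseteq I$ and $1\in I$. It is graded because $f(I)$ is generated by the images $f(x_g)$ of the homogeneous components of elements of $I$, which are homogeneous. Next, take $y_1,\dots,y_{n+1}\in h(R')$ with $y_1\cdots y_{n+1}\in f(I)$. The key lifting step is to write $y_j\in R'_{g_j}$ and use $f(R)=\bigoplus_g f(R_g)=R'$, which gives $f(R_g)=R'_g$; hence each $y_j=f(x_j)$ with $x_j\in h(R)$. Then $f(x_1\cdots x_{n+1})\in f(I)$, so $x_1\cdots x_{n+1}\in I+\ker f=I$. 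The hypothesis on $I$ now gives either $x_1\cdots x_n\in I$, so $y_1\cdots y_n\in f(I)$, or $x_1\cdots\widehat{x_i}\cdots x_{n+1}\in\sqrt I$ for some $i\in[1,n]$, so the corresponding product of the $y_j$ lies in $f(\sqrt I)=\sqrt{f(I)}$.

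I expect the main obstacle to be the homogeneous lifting in part (2), that is, justifying $f(R_g)=R'_g$ from surjectivity and gradedness, together with the identity $f(\sqrt I)=\sqrt{f(I)}$. Both use the hypothesis $\ker f\subseteq I$ or the direct-sum decomposition, and both should be stated carefully. The rest is routine transport of the definition.
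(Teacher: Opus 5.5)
Your proposal is correct and follows essentially the same route as the paper. In (1) you transport the condition through $f$ using $f^{-1}(\sqrt{I'})=\sqrt{f^{-1}(I')}$, and in (2) you lift to homogeneous preimages, use $f^{-1}(f(I))=I+\ker(f)=I$, and push the conclusion forward via $f(\sqrt{I})\subseteq\sqrt{f(I)}$; your extra checks (properness and gradedness of $f^{-1}(I')$ and $f(I)$, and the justification $f(R_g)=R'_g$ for the homogeneous lift) fill in details the paper's proof takes for granted.
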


    \begin{proof}

    \
    
    \begin{enumerate}
        \item Let $I^{\prime}$ be a graded $n$-absorbing primary ideal of $R^{\prime}$. Let $x_1, x_2, \dots, x_{n+1} \in h(R)$ such that $x_1\dots x_{n+1} \in f^{-1}(I^{\prime})$. Then, $f(x_1)\dots f(x_{n+1}) \in I^{\prime}.$ Now, $I^{\prime}$ is a graded $n$-absorbing primary ideal of $R^{\prime}$, so either 
      \begin{enumerate}
          \item $f(x_1)\dots f(x_n) \in I^{\prime}$, or
          \item $f(x_1)\cdots \reallywidehat{f(x_i)} \cdots f(x_{n+1}) \in \sqrt{I^{\prime}}$ for some $i \in [1,n]$. 
      \end{enumerate}
      In the first case, $x_1 \cdots x_n \in f^{-1}(I^{\prime})$, while in the second case, $x_1 \cdots \reallywidehat{x_i} \cdots x_{n+1} \in f^{-1}(\sqrt{I^{\prime}}) = \sqrt{f^{-1}(I^{\prime})}$. Therefore $f^{-1}(I^{\prime})$ is a graded $n$-absorbing primary ideal of $R$. 

     \item Suppose
\[
x_1',\dots,x_{n+1}' \in h(R')
\]
such that
\[
x_1'\cdots x_{n+1}' \in f(I).
\]

Since $f$ is a surjective graded homomorphism and $I$ is a graded
$n$-absorbing primary ideal of $R$, there exist
$x_1,\dots,x_{n+1}\in h(R)$ such that
\[
f(x_i)=x_i'
\quad \text{for all } i=1,\dots,n+1.
\]

Then
\begin{align*}
f(x_1\cdots x_{n+1})
    &= f(x_1)\cdots f(x_{n+1}) \\
    &= x_1'\cdots x_{n+1}'
    \in f(I).
\end{align*}

Hence,
\[
x_1\cdots x_{n+1}
    \in f^{-1}(f(I))
    = I+\ker(f)
    = I,
\]
since $\ker(f)\subseteq I$.

Because $I$ is a graded $n$-absorbing primary ideal, either
\[
x_1\cdots x_n\in I,
\]
or
\[
x_1\cdots\widehat{x_i}\cdots x_{n+1}
    \in \sqrt{I}
\]
for some $i\in\{1,\dots,n\}$.

In the first case,
\begin{align*}
f(x_1\cdots x_n)
    &= f(x_1)\cdots f(x_n) \\
    &= x_1'\cdots x_n'
    \in f(I).
\end{align*}

In the second case,
\begin{align*}
f(x_1\cdots \widehat{x_i}\cdots x_{n+1})
    &= f(x_1)\cdots \widehat{f(x_i)}
       \cdots f(x_{n+1}) \\
    &= x_1'\cdots \widehat{x_i'}
       \cdots x_{n+1}' \\
    &\in f(\sqrt{I})
    \subseteq \sqrt{f(I)}.
\end{align*}

Thus $f(I)$ is a graded $n$-absorbing primary ideal of $R'$.
    \end{enumerate}
      
    \end{proof}

This result shows that graded $n$-absorbing primary ideals are stable under the standard constructions connecting graded rings. In particular, quotient graded rings inherit graded $n$-absorbing primary ideals from ideals containing the defining homogeneous kernel, while graded homomorphisms allow these structures to be transported to larger graded settings via inverse images.

Collectively, these results demonstrate that the structural properties underlying the theory of graded 2-absorbing ideals extend naturally to arbitrary $n$. Having established this general framework, we now specialize to principal monomial ideals, where the theory admits a remarkably concrete combinatorial realization.

\section{The Combinatorial Model for Principal Monomial Ideals}

Anderson and Badawi pose three conjectures on $n$-absorbing ideals in their 2011 paper \cite{anderson_n_2011}. The first, relating to the Noether exponent of $\sqrt{I}$, was resolved in \cite{choi2019n}. The second conjecture asserts that every $n$-absorbing ideal is strongly $n$-absorbing, and was recently proved by Secord \cite{secord24}. Conjecture III remains open in general and has been verified only for restricted classes of rings \cite{anderson_n_2011,choi2022n,laradji2017n,sihem2017anderson}. It states:

\

\noindent \textbf{Conjecture III: }\textit{
If $I$ is an $n$-absorbing ideal of a commutative ring $R$, then
$I[X]$ is an $n$-absorbing ideal of $R[X]$.}

\

We first verify Conjecture III in the setting of principal monomial ideals. This provides a polynomial extension invariance result for the absorbing index of this class of ideals. Having established this invariance, we turn to the internal structure of principal monomial ideals and develop a combinatorial model describing their graded absorbing behavior.

The monotonicity results from the previous section suggest that graded $n$-absorbing ideals should admit a natural organization according to their absorbing index. For a principal monomial ideal generated by
\[
x^\alpha=x_1^{a_1}\cdots x_m^{a_m},
\]
the exponent vector
\[
\alpha=(a_1,\dots,a_m)\in\mathbb{N}^{m},
\]
completely determines the generator, while the absorbing index depends only on its total degree
\[
\vert \alpha \vert=a_{1}+\cdots+a_{m}.
\]
Consequently, the graded $n$-absorbing principal monomial ideals correspond precisely to the lattice points contained in the nested family
\[
\Delta_{1}^{m} \subseteq \Delta_{2}^{m} \subseteq \dots \subseteq \Delta_{n}^{m} \subseteq \dots
\]
where
\[
\Delta_{n}^{m}=\{\alpha\in\mathbb{N}^m:|\alpha|\leq n\},
\]
where each $\Delta_{n}^{m}$ consists of all exponent vectors of total degree at most $n$.

\subsection{Polynomial Extension Invariance}
 Following notation from \cite{choi2019n}, we denote the smallest integer $n$ such that ideal $I$ is $n$-absorbing, 
\[\omega(I)=\textnormal{min}\{n \in \mathbb{N} | I \textnormal{ is $n$-absorbing}\}. \] 

\noindent We can equivalently pose Conjecture III as $\omega_{R[X]}(I[X]) = \omega_{R}(I).$ We restrict to principal monomial ideals and show this conjecture holds.

\begin{theorem} \label{thm:conjectureiii}
    Let $F$ be a field and $R = F[x_1, \dots, x_m]$. For every proper, principal monomial ideal $I = (x^{\alpha})$ of $R$, $$\omega_{R[X]}(I[X]) = \omega_{R}(I).$$
\end{theorem}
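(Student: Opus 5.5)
The plan is to compute both sides explicitly and show each equals $|\alpha|$. The key observation is that $R[X]=F[x_1,\dots,x_m,X]$ is again a polynomial ring over a field, and $I[X]=x^{\alpha}R[X]$ is again a principal monomial ideal there, with exponent vector $(\alpha,0)\in\mathbb{N}^{m+1}$ of the same total degree $|\alpha|$. It therefore suffices to prove, for an arbitrary polynomial ring $S=F[y_1,\dots,y_k]$ and a proper principal monomial ideal $(y^{\beta})$ with $\beta\neq 0$, that $\omega_S((y^\beta))=|\beta|$. Applying this once with $S=R$ and once with $S=R[X]$ gives $\omega_{R[X]}(I[X])=|\alpha|=\omega_R(I)$.

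\emph{Lower bound.} Suppose $|\beta|=d$ and write $y^\beta=z_1z_2\cdots z_d$ as a product of $d$ variables, repeated according to multiplicity. If $n<d$, group these into $n+1$ nonempty monomial factors. Every product obtained by omitting one factor is a proper divisor of $y^\beta$. It has degree less than $d$ and is nonzero, so it is not in $(y^\beta)$. Hence $(y^\beta)$ is not $n$-absorbing, and $\omega\geq d$.

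\emph{Upper bound.} I must show $(y^\beta)$ is $d$-absorbing for \emph{arbitrary}, not merely homogeneous, elements. I would use unique factorization in $S$. Write $y^\beta=p_1\cdots p_d$ with primes $p_j\in\{y_1,\dots,y_k\}$, counted with multiplicity. Suppose $f_1\cdots f_{d+1}\in(y^\beta)$.
\begin{itemize}
\item[(a)] If some $f_i=0$, omit any other factor; the remaining product is $0\in(y^\beta)$.
\item[(b)] Otherwise, for each variable $y_j$ let $v_j(f_i)$ be its multiplicity as a prime factor of $f_i$. Divisibility gives $\sum_i v_j(f_i)\ge\beta_j$ for every $j$.
\item[(c)] For each $j$, pick a minimal set of indices $i$ whose contributions $v_j(f_i)$ already sum to at least $\beta_j$. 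Each chosen index contributes at least $1$, so this set has size at most $\beta_j$.
\item[(d)] The union of these sets over all $j$ has size at most $\sum_j\beta_j=d<d+1$. Hence some index $i_0$ lies in none of them.
\item[(e)] Omitting $f_{i_0}$ keeps every inequality $\sum_{i\neq i_0}v_j(f_i)\ge\beta_j$, so $y^\beta$ divides $\prod_{i\neq i_0}f_i$.
\end{itemize}
This gives $\omega\le d$. (Alternatively one could cite the known result that $(p_1\cdots p_d)$ is $d$-absorbing in a UFD \cite{anderson_n_2011}.)

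The main obstacle is exactly this upper bound. It must hold for non-homogeneous elements, so the combinatorial simplex argument for homogeneous monomials does not directly apply. Unique factorization, via valuations at the prime elements $y_j$, is what resolves it, and the pigeonhole in (d) has to be done per variable with multiplicities as in (c).

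A minor point is properness. Since $I$ is proper, $\alpha\neq0$, and $I[X]$ is then proper as well. Finally, monotonicity (Becker's result, the ungraded analogue of Theorem \ref{thm:mn}) ensures that $\omega$ is a genuine threshold, so the lower and upper bounds together pin down its value.
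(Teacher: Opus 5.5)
Your proposal is correct and follows the paper's proof: both observe that $I[X]=x^{\alpha}R[X]$ is a proper principal monomial ideal of $F[x_1,\dots,x_m,X]$ with the same total degree $|\alpha|$, and both evaluate each side of the equality as $|\alpha|$. The only difference is that the paper cites Corollary 3.7 of Choi and Walker for $\omega((x^{\alpha}))=|\alpha|$, whereas you prove it directly; in step (c), the bound on the size of the chosen index set for $y_j$ needs minimality and not just positivity (dropping any one chosen index leaves a sum that is at most $\beta_j-1$ yet at least the set size minus one), which is a one-line fix.
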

\begin{proof}
    Let $I = (x^{\alpha})$, and define $X$ such that $X \notin \{x_1, \dots, x_m\}$. Then the extension $I[X]$ is the ideal of $R[X] = F[x_1, \dots, x_m, X]$ generated by $(x^{\alpha})$. That is, $I[X] = (x^{\alpha}) \cdot R[X] $, which is still a principal monomial ideal (but in the polynomial ring $F[x_1, \dots, x_m, X]$ over the field $F$ in $m+1$ variables, generated by the monomial $x^{\alpha}\cdot X^0$). By Corollary 3.7 in Choi and Walker \cite{choi2019n} applied to $R[X] = F[x_1, \dots, x_m, X]$, we have \[ \omega_{R[X]}(I[X]) = \text{deg}_{R[X]}(x^{\alpha}) = |\alpha| + 0 = |\alpha|.\] Similarly we see that \[ \omega_{R}(I) = \text{deg}_R(x^{\alpha}) = |\alpha|.\] Thus we have that $\omega_{R[X]}(I[X]) = |\alpha|=\omega_{R}(I).$
\end{proof}

\begin{corollary}
Let $F$ be a field, let $R=F[x_1,\ldots,x_m]$, and let $I=(x^\alpha)$ be a proper principal monomial ideal of $R$. Then, for every integer $t\geq 1$, $I$ is $n$-absorbing if, and only if, $I[Y_{1},\dots,Y_{t}]$ is $n$-absorbing. In particular
\[
\omega_{R[Y_1,\ldots,Y_t]}(I[Y_1,\ldots,Y_t])
=
\omega_R(I).
\]
\end{corollary}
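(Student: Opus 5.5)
The corollary follows by iterating Theorem \ref{thm:conjectureiii}, or more directly by applying Corollary 3.7 of Choi and Walker \cite{choi2019n} once to a polynomial ring in $m+t$ variables. I would take the direct route. Set $S=R[Y_1,\ldots,Y_t]=F[x_1,\ldots,x_m,Y_1,\ldots,Y_t]$, where the $Y_j$ are new indeterminates distinct from the $x_i$.

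First I check that the extended ideal $I[Y_1,\ldots,Y_t]$ equals $x^\alpha S$. Any polynomial in the $Y_j$ with coefficients in $(x^\alpha)$ is divisible by $x^\alpha$ in $S$, and conversely. So $I[Y_1,\ldots,Y_t]$ is again a principal monomial ideal, generated by the monomial $x^\alpha Y_1^0\cdots Y_t^0$. Its exponent vector is $(\alpha,0,\ldots,0)\in\mathbb{N}^{m+t}$, with total degree $|\alpha|$.

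Properness is preserved. Since $I$ is proper, $x^\alpha$ is not a unit, so $|\alpha|\geq 1$. Hence $x^\alpha S\neq S$, because the units of $S$ are the nonzero constants.

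Next I apply Corollary 3.7 of \cite{choi2019n} in both rings, exactly as in the proof of Theorem \ref{thm:conjectureiii}. This gives
\[
\omega_S(I[Y_1,\ldots,Y_t])=|\alpha|=\omega_R(I),
\]
which is the displayed equality.

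For the ``if and only if'' statement, I use monotonicity. By Becker's result \cite{becker_results_2015}, an ideal $J$ is $n$-absorbing exactly when $n\geq\omega(J)$: an $\omega(J)$-absorbing ideal is $n$-absorbing for every larger $n$, and $J$ is not $n$-absorbing for any $n<\omega(J)$ by minimality. Therefore $I$ is $n$-absorbing iff $n\geq|\alpha|$ iff $I[Y_1,\ldots,Y_t]$ is $n$-absorbing.

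As a cross-check, the same result follows by induction on $t$ using Theorem \ref{thm:conjectureiii}. Take $R_{t-1}=R[Y_1,\ldots,Y_{t-1}]$, which is a polynomial ring over $F$. Then $I[Y_1,\ldots,Y_t]=(I[Y_1,\ldots,Y_{t-1}])[Y_t]$, and the ideal $I[Y_1,\ldots,Y_{t-1}]$ is a proper principal monomial ideal of $R_{t-1}$, so the theorem applies at each step.

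The only point needing care is the identification of $I[Y_1,\ldots,Y_t]$ with the principal monomial ideal $x^\alpha S$, since that is what allows the Choi–Walker degree formula to be applied. Everything after that is bookkeeping with the monotonicity of absorbing indices.
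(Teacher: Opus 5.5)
Your proof is correct and is essentially the paper's argument. The paper iterates Theorem~\ref{thm:conjectureiii}, whose own proof is the same Choi--Walker degree computation that you carry out once in $F[x_1,\ldots,x_m,Y_1,\ldots,Y_t]$, and then uses that an ideal is $n$-absorbing iff $\omega\le n$. Your appeal to Becker's monotonicity supplies the justification for that last step, which the paper states without comment.
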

\begin{proof}
    By repeated application of Theorem \ref{thm:conjectureiii},
    \[
    \omega_{R[Y_{1},\dots,Y_{t}]}(I[Y_{1},\dots,Y_{t}]) = \omega_{R}(I).
    \]
    Since an ideal is $n$-absorbing if and only if
    \[
    \omega(I) \leq n,
    \]
    it follows that $I$ is $n$-absorbing if and only if $I[Y_{1},\dots,Y_{t}]$ is $n$-absorbing.
\end{proof}

Thus, polynomial extension does not alter the absorbing index of a principal monomial ideal. This invariance allows us to focus on the intrinsic combinatorial structure of principal monomial ideals, which we now describe through their exponent vectors. 

\subsection{Principal Monomial Ideals as Lattice Points} 

 Let $R = F[x_{1},x_{2},\dots,x_{m}]$ be a polynomial ring, where $F$ is  a field, and let $I = (x_{1}^{a_{1}}x_{2}^{a_{2}} \dots x_{m}^{a_{m}})$ be a monomial ideal. Define $\vert \alpha \vert = a_{1}+a_{2}+\dots+a_{m}$ as the total degree of the monomial generator $x^{\alpha}$. Additionally, $\vert \alpha \vert$ corresponds to the total number of irreducible factors counted with multiplicity. We first develop a canonical bijection of the divisibility postet  of the monoid of monomials in $\mathbb{Z}[x,y]$ to the lattice $(\mathbb{N}^{2},\leq)$. We then generalize this result to the full $\mathbb{N}^{m}$ lattice.

 Consider the graded ring $R = \mathbb{Z}[x,y]$ with the standard grading unless indicated otherwise. We begin by studying the relationship of the principal monomial ideals $I = (x^{a}y^{b})$, $a,b \in \mathbb{N}$, of $R$. As illustrated in Figure \ref{fig:HasseIso1}, we can create an isomorphism between the Hasse diagram of monomials and the standard lattice graph by imposing the rules:
\begin{enumerate}
    \item Moving right in the $\mathbb{N}^{2}$ lattice increases the powers of $x$ in the Hasse diagram.
    \item Moving upward in the $\mathbb{N}^{2}$ lattice increases the powers of $y$ in the Hasse diagram.
\end{enumerate}

 \begin{figure}[ht]
\centering

\resizebox{\linewidth}{!}{%
\begin{tikzpicture}[
    node distance=1.8cm and 2.2cm,
    every node/.style={
        draw,
        rounded corners,
        minimum width=1.2cm,
        minimum height=0.7cm,
        align=center
    }
]

%------------------------------------------------
% Left diagram: lattice points in N^2
%------------------------------------------------

\node (a00) at (0,0) {$(0,0)$};

\node (a10) at (-1.8,-1.8) {$(1,0)$};
\node (a01) at (1.8,-1.8) {$(0,1)$};

\node (a11) at (0,-3.6) {$(1,1)$};

\draw (a00)--(a10);
\draw (a00)--(a01);
\draw (a10)--(a11);
\draw (a01)--(a11);

\node[draw=none] at (0,1) {$\mathbb{N}^2$ lattice};

%------------------------------------------------
% Right diagram: monomial ideals
%------------------------------------------------

\node (i1) at (8,0) {$(1)$};

\node (ix) at (6.2,-1.8) {$(x)$};
\node (iy) at (9.8,-1.8) {$(y)$};

\node (ixy) at (8,-3.6) {$(xy)$};

\draw (i1)--(ix);
\draw (i1)--(iy);
\draw (ix)--(ixy);
\draw (iy)--(ixy);

\node[draw=none] at (8,1) {Principal monomial ideals};

%------------------------------------------------
% Correspondence arrows
%------------------------------------------------

\draw[dashed,->,bend left=8]  (a00) to (i1);

\draw[dashed,->,bend left=18] (a10) to (ix);

\draw[dashed,->,bend right=18] (a01) to (iy);

\draw[dashed,->,bend right=8] (a11) to (ixy);

\node[draw=none,font=\small] at (4,-4.8)
{$(a,b)\longleftrightarrow x^a y^b$};

\end{tikzpicture}

} % end resizebox
\caption{The canonical isomorphism between $(\mathbb{N}^{2},\leq)$ where $(a,b) \leq (c,d)$ if and only if $a \leq c$ and $b \leq d$ and the divisbility poset $(M,\vert)$ where $M=\{x^{a}y^{b} : (a,b)\in\mathbb{N}^{2}\}$ is the monoid of monomials in $\mathbb{Z}[x,y]$ .}
\label{fig:HasseIso1}

\end{figure}
This leads to the following result relating the Hasse diagrams of monomial ideals of $\mathbb{Z}[x,y]$ ordered by divsibility and the $\mathbb{N}^{2}$ lattice.

\begin{proposition}
    Let $M=\{x^{a}y^{b} : (a,b)\in\mathbb{N}^{2}\}$ be the monoid of monomials in $\mathbb{Z}[x,y]$ ordered by divisibility. Then the divisbility poset $(M,\vert)$ is canonically isomoprhic to the lattice $(\mathbb{N}^{2},\leq)$, where $(a,b) \leq (c,d)$ if and only if $a \leq c$ and $b \leq d$. Under this identification, the Hasse diagram of monomials is exactly the standard lattice graph on $\mathbb{N}^{2}$ with cover relations $(a,b) \prec (a+1,b) $ and $(a,b) \prec (a,b+1) $ corresponding to multiplication by $x$ and $y$ respectively.
\end{proposition}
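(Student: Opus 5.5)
The natural map to use is
\[
\phi:\mathbb{N}^{2}\to M,\qquad \phi(a,b)=x^{a}y^{b}.
\]
I will show that $\phi$ is a bijection, that it is order-preserving and order-reflecting, and then read off the cover relations.

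\textbf{Bijection.} Surjectivity holds by the definition of $M$. For injectivity, suppose $x^{a}y^{b}=x^{c}y^{d}$ in $\mathbb{Z}[x,y]$. Distinct monomials are linearly independent over $\mathbb{Z}$, since $\mathbb{Z}[x,y]$ is the free $\mathbb{Z}$-module on the monomials. Hence $(a,b)=(c,d)$.

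\textbf{Order isomorphism.} This is the step that needs the most care, because divisibility is taken in $\mathbb{Z}[x,y]$ and not in $F[x,y]$.

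If $a\le c$ and $b\le d$, then $x^{c}y^{d}=x^{a}y^{b}\cdot x^{c-a}y^{d-b}$, so $x^{a}y^{b}\mid x^{c}y^{d}$.

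Conversely, suppose $x^{c}y^{d}=x^{a}y^{b}g$ for some $g\in\mathbb{Z}[x,y]$. Since $\mathbb{Z}$ is a domain, degrees add under multiplication in each variable separately. Comparing $x$-degrees and $y$-degrees forces $g$ to have a single term $u\,x^{c-a}y^{d-b}$ with $c-a\ge 0$ and $d-b\ge 0$, where $u\in\mathbb{Z}$ is nonzero. Comparing coefficients then gives $u=1$. Alternatively, one can invoke unique factorization in the UFD $\mathbb{Z}[x,y]$, with $x$ and $y$ as non-associate primes, and compare multiplicities. Either way $(a,b)\le(c,d)$. So $\phi$ is an isomorphism of posets, and $(M,\mid)$ inherits the lattice structure of $(\mathbb{N}^{2},\le)$, with meet $=\gcd$ and join $=\operatorname{lcm}$ given by coordinatewise min and max.

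\textbf{Hasse diagram.} A poset isomorphism carries covers to covers, so it suffices to determine the covers in $(\mathbb{N}^{2},\le)$. Suppose $(a,b)<(c,d)$. Then
\[
(c-a)+(d-b)\ge 1,
\]
and if this sum is at least $2$, we can interpose an intermediate point: $(a+1,b)$ if $c>a$, or $(a,b+1)$ otherwise. So $(a,b)\prec(c,d)$ exactly when $(c,d)-(a,b)\in\{e_{1},e_{2}\}$. Under $\phi$, adding $e_{1}$ or $e_{2}$ corresponds to multiplying by $x$ or $y$. Therefore the Hasse diagram of $(M,\mid)$ is the standard lattice graph on $\mathbb{N}^{2}$, which is also the underlying graph of $\operatorname{Cay}(\mathbb{N}^{2},\{e_{1},e_{2}\})$.

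I expect the only genuine obstacle to be the coefficient issue in the converse direction of the order isomorphism. The remaining steps are routine.
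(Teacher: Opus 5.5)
Your proof is correct and follows the same route as the paper: the map $(a,b)\mapsto x^{a}y^{b}$, the criterion $x^{a}y^{b}\mid x^{c}y^{d}\iff a\le c,\ b\le d$, and covers identified with the steps $e_1,e_2$. You also supply the details the paper's two-line proof only asserts, namely injectivity, the converse of the divisibility criterion over $\mathbb{Z}$, and the interpolation argument ruling out non-unit steps as covers. One small tightening: comparing top $x$- and $y$-degrees alone does not force $g$ to be a single term, so either rely on your UFD argument or note that multiplication by $x^{a}y^{b}$ shifts the terms of $g$ bijectively, so $x^{a}y^{b}g$ has exactly as many terms as $g$.
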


\begin{proof}
    The correspondence $(a,b) \rightarrow x^{a}y^{b}$ is bijective, and $x^{a}y^{b} \mid x^{c}y^{d}$ if and only if $a \leq c$ and $b \leq d$. The cover relations are therefore exactly multiplication by $x$ or $y$, corresponding to the edges of the lattice graph.
\end{proof}

Progressing from the $\mathbb{N}^{2}$ lattice to the $\mathbb{N}^{m}$ lattice for principal monomial ideals $I=(x^{\alpha})$ for polynomial rings $R=F[x_{1},x_{2},\dots,x_{m}]$ follows a natural pattern. Given a vertex $(a_{1},a_{2},\dots_{a_{m}}) \in \mathbb{N}^{m}$, we create a correspondence with a principal monomial ideal $(x_{1}^{a_{1}}x_{2}^{a_{2}} \cdots x_{m}^{a_{m}}) = (x^{\alpha}$). We now state and prove the full lattice point correspondence result in $\mathbb{N}^{m}$.

\begin{theorem}\label{thm:nmlatticehasse}
Let $R=F[x_1,x_2,\dots,x_m]$ and let
\[
M=\{x^\alpha:\alpha\in\mathbb N^{m}\}
\]
be the monoid of monomials. The map
\[
\alpha\longmapsto x^{\alpha}
\]
is a canonical poset isomorphism
\[
(\mathbb N^m,\leq)\cong(M,\mid),
\]
where $\alpha\leq\beta$ if and only if $\alpha_i\leq\beta_i$ for every coordinate. Moreover, the map
\[
x^\alpha\longmapsto(x^\alpha)
\]
identifies $(M,\mid)$ with the poset of principal monomial ideals ordered by reverse inclusion. Consequently, the Hasse diagram of principal monomial ideals ordered by reverse inclusion is isomorphic to the lattice graph on $\mathbb N^m$.
\end{theorem}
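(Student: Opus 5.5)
The proof follows the same pattern as the $\mathbb{N}^{2}$ proposition above, in three steps: the exponent map is an order isomorphism, principal ideals reverse divisibility, and isomorphic posets have isomorphic Hasse diagrams.

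\textbf{Step 1: bijectivity.} Monomials $x^{\alpha}$ with distinct exponent vectors are distinct elements of $R$, since the monomials form an $F$-basis of $R$. So $\alpha\mapsto x^{\alpha}$ is injective, and it is surjective onto $M$ by definition.

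\textbf{Step 2: order preservation.} We show $x^{\alpha}\mid x^{\beta}$ if and only if $\alpha\le\beta$ coordinatewise. If $\alpha\le\beta$, then $\beta-\alpha\in\mathbb N^{m}$ and $x^{\beta}=x^{\alpha}x^{\beta-\alpha}$. Conversely, suppose $x^{\beta}=x^{\alpha}f$ for some $f\in R$. Then $f$ must itself be a monomial (times a nonzero scalar): in the domain $R$, the product of $x^{\alpha}$ with a polynomial having at least two terms has at least two terms, because multiplication by $x^{\alpha}$ shifts the exponents of the terms injectively. Comparing exponents, $f=x^{\gamma}$ with $\gamma=\beta-\alpha\ge 0$. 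Equivalently, one can read off the degree in each $x_i$ separately.

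\textbf{Step 3: passage to ideals.} In the domain $R$, $(x^{\alpha})\supseteq(x^{\beta})$ holds exactly when $x^{\alpha}\mid x^{\beta}$. We also need $(x^{\alpha})=(x^{\beta})$ to force $\alpha=\beta$. Mutual divisibility gives $\alpha\le\beta\le\alpha$ by Step 2, and in any case the only units of $R$ are nonzero scalars. Hence $x^{\alpha}\mapsto(x^{\alpha})$ is a bijection onto the principal monomial ideals, and it carries divisibility to reverse inclusion. Composing with Steps 1 and 2 gives $(\mathbb N^m,\le)\cong$ (principal monomial ideals, $\supseteq$).

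\textbf{Step 4: Hasse diagrams.} A poset isomorphism preserves cover relations, so it induces a graph isomorphism of Hasse diagrams. It remains to identify the covers in $(\mathbb N^{m},\le)$. If $\alpha<\beta$, then $|\beta-\alpha|\ge1$. If $|\beta-\alpha|\ge2$, choose $i$ with $\beta_i>\alpha_i$; then $\alpha<\alpha+e_i<\beta$, so $\beta$ does not cover $\alpha$. Conversely, $\alpha+e_i$ covers $\alpha$ because nothing lies strictly between points differing in total degree by $1$. So $\beta$ covers $\alpha$ exactly when $\beta=\alpha+e_i$ for some $i$, and these are precisely the edges of the lattice graph, which is $\operatorname{Cay}(\mathbb N^m,\{e_1,\dots,e_m\})$. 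Under the isomorphism these edges correspond to multiplication by a single variable $x_i$.

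The main place needing care is the converse in Step 2, namely that a monomial divides a monomial only through a monomial cofactor. I would handle it with the support and term-count argument in the domain $R$. Everything else is formal.
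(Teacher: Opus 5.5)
Your proof is correct and follows the same route as the paper. Both arguments identify $\alpha\mapsto(x^\alpha)$ as a bijection, use $x^\alpha\mid x^\beta \iff \alpha\le\beta$ to obtain an order isomorphism with reverse inclusion, and then show that the covers are exactly $\alpha\prec\alpha+e_i$. You also supply details the paper leaves as ``clear'': the monomial-cofactor argument for the converse of the divisibility criterion, the injectivity of $x^\alpha\mapsto(x^\alpha)$, and the explicit intermediate point $\alpha+e_i$, which is more precise than the paper's ``$\beta=\alpha+\gamma$.''
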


\begin{proof}
     Let $R = F[x_{1},x_{2},\dots,x_{m}]$ be equipped with the standard $\mathbb{N}^{m}$ grading where $\deg(x_{i})=e_{i}$, that is, the grading obtained where you assign each variable its corresponding standard basis vector. Thus, we can express $R = \bigoplus_{\alpha \in \mathbb{N}^{m}} R_{\alpha}$ with $\deg(x_{i})=e_{i}$. Let $I = (x^{\alpha})$ represent the principal monomial ideal generated by $x_{1}^{a_{1}}x_{2}^{a_{2}}\dots x_{m}^{a_{m}}$, where $\alpha = (a_{1},a_{2},\dots,a_{m}) \in \mathbb{N}^{m}$. 

    Define a mapping $\Phi: \mathbb{N}^{m} \to \{\textnormal{Principal monomial ideals of \textit{R}}\}$ by $\Phi (\alpha) = (x^{\alpha}).$ Clearly $\Phi$ is a bijection. Thus, we can relate every algebraic statement about principal monomial ideals into an equivalent statement about exponent vectors in $\mathbb{N}^{m}$. For every $\alpha,\beta \in \mathbb{N}^{m}$, $x^{\alpha} \vert x^{\beta}$ if and only if $a_{i} \leq b_{i}$ for all $i=1,2,\dots,m$ where $\beta = (b_{1},b_{2},\dots,b_{m})$. Hence $(x^{\beta}) \subseteq (x^{\alpha})$ in the set of principal monomial ideals of $R$ if and only if $\alpha \leq \beta$ coordinatewise. Thus the poset of principal monomial ideals ordered by reverse inclusion is isomorphic to the standard product order on $\mathbb{N}^{m}$.

     Since the Hasse diagram records cover relations, it follows that $(x^{\beta}) \subsetneq (x^{\alpha})$ is a cover if and only if there is no principal monomial between them. In coordinates, this means $\beta = \alpha + e_{i}$ for some standard basis vector $e_{i}$. We can see this to be true since
    \begin{enumerate}
        \item[\textbullet] if $\vert \beta - \alpha \vert \geq 2$, then some intermediate vector $\gamma$ exists where $\beta = \alpha + \gamma$;
        \item[\textbullet] if $\vert \beta - \alpha \vert = 1$, that is $\beta - \alpha = e_{i}$, then no intermediate vector exists. 
    \end{enumerate}
    Therefore the edges of the Hasse diagram are exactly identified with the relation $\alpha \leftrightarrow \alpha + e_{i}$. This is exactly the lattice graph on $\mathbb{N}^{m}$.

\end{proof}

We can further extend this framework into a complete combinatorial model for graded $n$-absorbing ideals in the monomial setting by relating the absorbing properties with simplexes in $\mathbb{N}^{m}$. This will be accomplished by utilizing the exponent vector $\alpha$. 

\subsection{The Simplex Characterization of Graded n-Absorbing Ideals} %Centerpiece of our paper

The exponent lattice admits two complementary interpretations. The Hasse diagram records the monomial divisibility structure, while the simplex representation identifies the region of exponent vectors corresponding to graded $n$-absorbing principal monomial ideals. Returning to $R= k[x,y]$, we can expand Figure \ref{fig:HasseIso1} to make the following observation about our existing bijection. For the principal monomial ideals, $(x^{a}y^{b})$ is graded $n$-absorbing if and only if $a+b \leq n$. For example, graded 2-absorbing ideals correspond to a lattice of points in $\mathbb{N}^{2}$ satisfying $a+b\leq2$. Geometrically, this forms the triangle in $\mathbb{N}^{2}$ as illustrated in Figure \ref{fig:HasseIso2}. See Figure \ref{fig:principalmonomialideals3} for the visualization of graded $n$-absorbing principal monomial ideals for increasing values of $n$.

\begin{figure}[ht]
\centering

\resizebox{\textwidth}{!}{%
\begin{tikzpicture}[
    every node/.style={
        draw,
        circle,
        minimum size=6.5mm,
        inner sep=1pt,
        font=\scriptsize
    },
    line/.style={thick},
    corr/.style={dashed, gray, semithick}
]

\definecolor{ab2}{RGB}{245,210,120}
\definecolor{grade2}{RGB}{120,190,255}
\definecolor{gradep}{RGB} {90,191,116}
\definecolor{base}{RGB}{240,240,240}

%%%%%%%%%%%%%%%%%%%%%%%%%%%%%%%%%%%%%%%%%%%%
% LEFT: N^2 lattice (correct product order)
%%%%%%%%%%%%%%%%%%%%%%%%%%%%%%%%%%%%%%%%%%%%

\begin{scope}[xshift=0cm,yshift=0cm]

\node[fill=ab2] (00) at (0,0) {$(0,0)$};
\node[fill=ab2] (10) at (1.6,-1.2) {$(1,0)$};
\node[fill=ab2] (01) at (-1.6,-1.2) {$(0,1)$};

\node[fill=ab2] (20) at (3.2,-2.4) {$(2,0)$};
\node[fill=ab2] (11) at (0,-2.4) {$(1,1)$};
\node[fill=ab2] (02) at (-3.2,-2.4) {$(0,2)$};

\node (21) at (1.6,-3.6) {$(2,1)$};
\node (12) at (-1.6,-3.6) {$(1,2)$};

\node (22) at (0,-4.8) {$(2,2)$};

\draw[line] (00)--(10);
\draw[line] (00)--(01);

\draw[line] (10)--(20);
\draw[line] (10)--(11);

\draw[line] (01)--(11);
\draw[line] (01)--(02);

\draw[line] (20)--(21);
\draw[line] (11)--(21);

\draw[line] (11)--(12);
\draw[line] (02)--(12);

\draw[line] (21)--(22);
\draw[line] (12)--(22);

\end{scope}

%%%%%%%%%%%%%%%%%%%%%%%%%%%%%%%%%%%%%%%%%%%%
% RIGHT: monomial divisibility poset
%%%%%%%%%%%%%%%%%%%%%%%%%%%%%%%%%%%%%%%%%%%%

\begin{scope}[xshift=9cm,yshift=0cm]

\node[fill=base] (r) at (0,0) {$(1)$};

\node[fill=gradep] (x) at (-1.6,-1.2) {$(x)$};
\node[fill=gradep] (y) at (1.6,-1.2) {$(y)$};

\node[fill=grade2] (x2) at (-3.2,-2.4) {$(x^2)$};
\node[fill=grade2] (xy) at (0,-2.4) {$(xy)$};
\node[fill=grade2] (y2) at (3.2,-2.4) {$(y^2)$};

\node (x2y) at (-1.6,-3.6) {$(x^2y)$};
\node (xy2) at (1.6,-3.6) {$(xy^2)$};

\node (x2y2) at (0,-4.8) {$(x^2y^2)$};

\draw[line] (r)--(x);
\draw[line] (r)--(y);

\draw[line] (x)--(x2);
\draw[line] (x)--(xy);

\draw[line] (y)--(xy);
\draw[line] (y)--(y2);

\draw[line] (x2)--(x2y);
\draw[line] (xy)--(x2y);

\draw[line] (xy)--(xy2);
\draw[line] (y2)--(xy2);

\draw[line] (x2y)--(x2y2);
\draw[line] (xy2)--(x2y2);

\end{scope}

%%%%%%%%%%%%%%%%%%%%%%%%%%%%%%%%%%%%%%%%%%%%
% CORRESPONDENCE
%%%%%%%%%%%%%%%%%%%%%%%%%%%%%%%%%%%%%%%%%%%%

\draw[corr,bend left=5] (00) to (r);

\draw[corr,bend left=8] (10) to (x);
\draw[corr,bend right=8] (01) to (y);

\draw[corr,bend left=10] (20) to (x2);
\draw[corr] (11) -- (xy);
\draw[corr,bend right=10] (02) to (y2);

\draw[corr,bend left=8] (21) to (x2y);
\draw[corr,bend right=8] (12) to (xy2);

\draw[corr,bend right=5] (22) to (x2y2);

%%%%%%%%%%%%%%%%%%%%%%%%%%%%%%%%%%%%%%%%%%%%
% LABELS
%%%%%%%%%%%%%%%%%%%%%%%%%%%%%%%%%%%%%%%%%%%%

\node[draw=none,font=\small] at (0,1) {$\mathbb{N}^2$ lattice};
\node[draw=none,font=\small] at (9,1) {Monomial divisibility poset};

%\node[draw=none,font=\small] at (4.5,-6.2)
%{$(a,b)\longleftrightarrow x^a y^b$};

\end{tikzpicture}%
}

\caption{The correspondence between exponent vectors in $\mathbb{N}^{2}$ and principal monomial ideals in the standard graded polynomial ring $k[x,y]$. On the left, the lattice points satisfying $a+b\leq 2$ form the discrete simplex $\Delta_{2}^{2}$. Under the bijection $(a,b)\leftrightarrow (x^{a}y^{b})$, these points correspond to the principal monomial ideals that are graded $2$-absorbing. The ideals shaded light blue are graded $2$-absorbing but not graded prime, while the ideals shaded dark green are graded prime (and therefore trivially graded $2$-absorbing). The origin corresponds to the unit ideal $(1)$, which is excluded from Theorem \ref{thm3-bijection} since only proper ideals are considered.}
\label{fig:HasseIso2}
\end{figure}

The fundamental observation is that the exponent vector $\alpha = (a_{1},a_{2},\dots,a_{m})$ completely determines the ideal and thus classifying the graded $n$-absorbing ideals in this setting becomes a combinatorial problem on lattice points in $\mathbb{N}^{m}$. By Theorem \ref{thm:nmlatticehasse}, we identify principal monomial ideals with lattice points of $\mathbb{N}^{m}$. We now characterize which lattice points correspond to the graded $n$-absorbing ideals.

\begin{theorem}
  Let $F$ be a field and let $R=F[x_{1},x_{2},\dots,x_{m}]$ with the standard grading. Let $I = (x^{\alpha})$ be a proper principal monomial ideal. Then the following are equivalent:
    \begin{enumerate}
        \item $I$ is graded $n$-absorbing.
        \item The exponent vector satisfies $\vert \alpha \vert \leq n$.
        \item The vector $\alpha$ lies in the discrete simplex
        \[
        \Delta_{n}^{m} = \{\alpha \in \mathbb{N}^{m}: \vert \alpha \vert \leq n\}.
        \]
    \end{enumerate}
    \label{thm3-bijection}
\end{theorem}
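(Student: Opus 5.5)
The equivalence (2)$\Leftrightarrow$(3) is immediate from the definition of $\Delta_n^m$, so the substance is (1)$\Leftrightarrow$(2). We will prove the two directions separately and directly. This keeps the argument self-contained in the graded setting. As a cross-check, (2)$\Rightarrow$(1) also follows from Choi--Walker's computation $\omega_R(I)=|\alpha|$ (used in Theorem \ref{thm:conjectureiii}) together with Lemma \ref{lem:ntogradedn} and Theorem \ref{thm:mn}.

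For (2)$\Rightarrow$(1), suppose $|\alpha|=d\le n$ and let $f_1,\dots,f_{n+1}\in h(R)$ with $x^\alpha\mid f_1\cdots f_{n+1}$. Write $x^\alpha=p_1p_2\cdots p_d$ as a product of $d$ variables, counted with multiplicity. Since $R$ is a UFD and each $p_j$ is prime, we would like to assign each $p_j$ to a factor $f_{i(j)}$ it divides, and then discard an unused factor. With multiplicities this needs care, so we argue inductively instead. Take a variable $x_k$ dividing $x^\alpha$ and some $f_i$ with $x_k\mid f_i$. Cancel one copy of $x_k$ from both, obtaining $x^{\alpha-e_k}\mid f_1\cdots (f_i/x_k)\cdots f_{n+1}$, where $f_i/x_k$ is still homogeneous. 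Repeating this $d$ times selects a multiset of at most $d\le n$ indices, so some index $i_0$ is never selected. The product of the selected factors is then divisible by $x^\alpha$, and hence so is $\prod_{i\ne i_0}f_i$. For the bookkeeping, we will phrase it as the statement that for each $i$ there is $\gamma_i\in\mathbb N^m$ with $x^{\gamma_i}\mid f_i$, $\sum_i\gamma_i\ge\alpha$, and $\sum_i|\gamma_i|\le d$. Then at most $d$ of the $\gamma_i$ are nonzero, and omitting an $f_{i_0}$ with $\gamma_{i_0}=0$ leaves a product in $I$. Properness is clear since $I$ is a proper principal ideal.

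For (1)$\Rightarrow$(2), we argue by contrapositive. Suppose $|\alpha|=d>n$, write $x^\alpha=p_1\cdots p_d$ with the $p_j$ variables, and group them into exactly $n+1$ nonempty monomials. Set $f_i=p_i$ for $i\le n$ and $f_{n+1}=p_{n+1}\cdots p_d$. These are homogeneous monomials whose product is $x^\alpha\in I$. Omitting any $f_i$ leaves a monomial of total degree strictly less than $d$, which cannot be divisible by $x^\alpha$. Since $(x^\alpha)$ contains a monomial exactly when $x^\alpha$ divides it, no omitted product lies in $I$, so $I$ is not graded $n$-absorbing.

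The main obstacle is the multiplicity bookkeeping in (2)$\Rightarrow$(1): a naive argument that each variable divides some factor fails when $\alpha$ has repeated exponents. The fix is to extract the exponent vectors $\gamma_i$ via prime factorization in the UFD $R$, using that divisibility of a product by a monomial $x^\alpha$ can be distributed coordinatewise across factors. Concretely, for each variable $x_k$, the $x_k$-adic valuations of the $f_i$ add up to at least $a_k$, and we choose $\gamma_{i,k}\le v_{x_k}(f_i)$ with $\sum_i\gamma_{i,k}=a_k$. The same valuation argument shows $x^\alpha\mid\prod_{i\ne i_0}f_i$ after omitting an index with $\gamma_{i_0}=0$. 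Homogeneity of the $f_i$ is not needed for this direction, which is consistent with the coincidence with ordinary $n$-absorption noted earlier.
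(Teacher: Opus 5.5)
Your proof is correct. Your (1)$\Rightarrow$(2) is the same as the paper's: write $x^\alpha$ as $n$ single variables times one leftover monomial, and note that every $n$-fold subproduct has total degree below $|\alpha|$. Your (2)$\Leftrightarrow$(3) is also identical to the paper's.

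The difference is in (2)$\Rightarrow$(1). The paper gives no direct argument there. It cites Choi--Walker's Corollary 3.7, $\omega((x^\alpha))=|\alpha|$, concludes that $(x^\alpha)$ is $n$-absorbing (tacitly using monotonicity in $n$), and then passes to the graded notion via Lemma \ref{lem:ntogradedn}. That is essentially your cross-check.

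Your main argument instead distributes the exponent vector across the factors using the additive valuations $v_{x_k}$ in the UFD $R$. You choose $\gamma_{i,k}\le v_{x_k}(f_i)$ with $\sum_i\gamma_{i,k}=a_k$, so at most $|\alpha|\le n$ of the $\gamma_i$ are nonzero, and you omit a factor with $\gamma_{i_0}=0$. This is sound, including the degenerate case where some $f_i=0$: either take the valuation to be $\infty$, or note that every $n$-fold subproduct omitting another factor still contains $0$.

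What your route buys:
\begin{itemize}
\item It is self-contained.
\item It never uses homogeneity, so it proves ordinary $n$-absorption directly.
\item Combined with your (1)$\Rightarrow$(2), whose witnesses are monomials and hence homogeneous, it recovers $\omega((x^\alpha))=\omega_{gr}((x^\alpha))=|\alpha|$ from scratch.
\item Consequently, the later appeals to Choi--Walker in Theorems \ref{thm:conjectureiii} and \ref{thm:princgradedstrngabs} could be made independent of that citation.
\end{itemize}
The paper's route is shorter, but it outsources the only nontrivial direction to the literature.
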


\begin{figure}[ht]
    \centering
    \includegraphics[width=\textwidth]{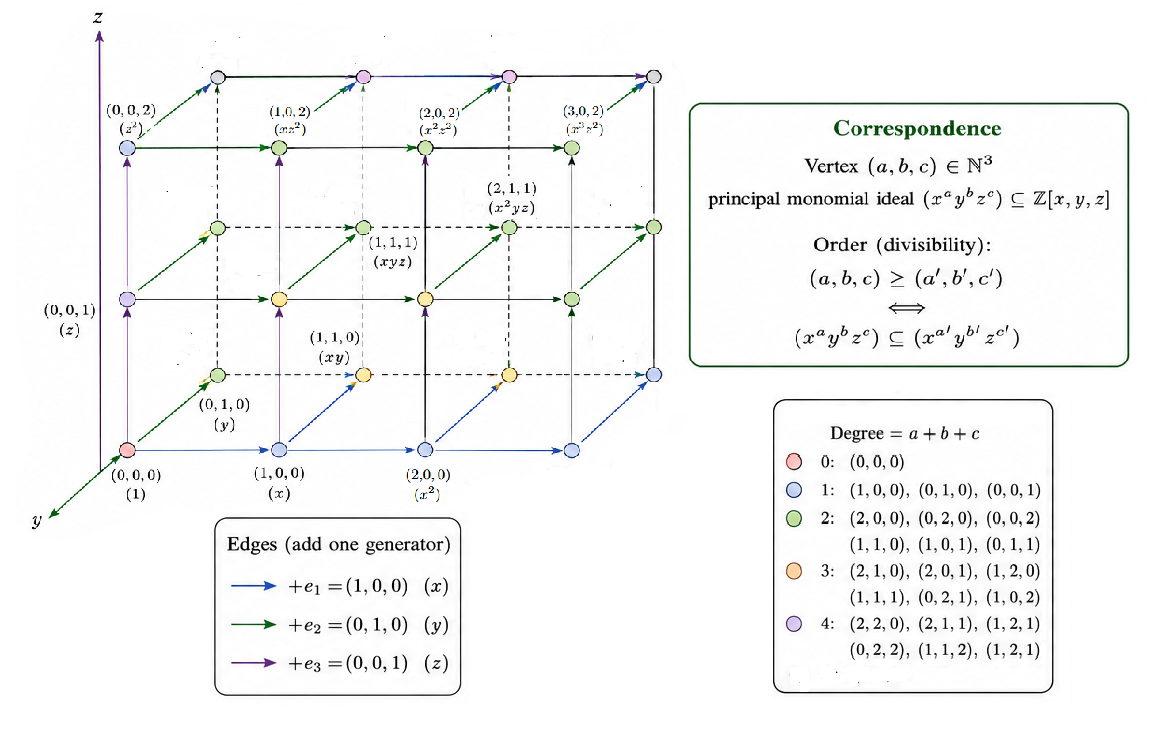}
    \caption{Principal monomial ideals in $\mathbb{Z}[x,y,z]$ and their corresponding exponent vectors. Aside from the origin, which corresponds to the unit ideal (1), the graded $n$-absorbing principal monomial ideals are precisely those whose exponent vectors lie in the discrete simplex $\Delta_{n}^{3} = \{\alpha \in \mathbb{N}^{3}: \vert \alpha \vert \leq n$\}.}
    \label{fig:principalmonomialideals3}
\end{figure}

\begin{proof}
By Theorem \ref{thm:nmlatticehasse}, every principal monomial ideal $I = (x)^{\alpha}$ is identified uniquely with its exponent vector $\alpha \in \mathbb{N}^{m}$, where divisibility corresponds to the coordinatewise partial order. Thus it suffices to determine which exponent vectors correspond to graded $n$-absorbing principal monomial ideals.

Proof of $(1) \implies (2)$: Suppose $I$ is graded $n$-absorbing, and assume, for purposes of contradiction, that $\vert \alpha \vert = k > n$. Then we have 
    \begin{align*}
        x^{\alpha} &= x_{1}^{a_{1}}x_{2}^{a_{2}}\dots x_{m}^{a_{m}} \\
        &= \underbrace{x_1\cdots x_1}_{a_{1}}
           \underbrace{x_2\cdots x_2}_{a_{2}}
            \cdots
            \underbrace{x_m\cdots x_m}_{a_{m}}.
    \end{align*}
    Define $n+1$ homogeneous elements by taking the first $n$ individual variable factors of $x^{\alpha}$ as $u_1, \dots, u_n$ and let $u_{n+1}$ be the products of the remaining $k-n$ factors. Then $u_1u_2\cdots u_{n+1} = x^{\alpha} \in (x^{\alpha})$. We show no $n$-fold subproduct lies in the ideal $(x^{\alpha})$. 
    For $j \leq n:$ the subproduct $u_1u_2\cdots \hat{u_j}\cdots u_{n+1}$ is a monomial of total degree $k-1 < k = |\alpha|$, and thus is not divisible by $x^{\alpha}$, and therefore not in $(x^{\alpha})$. 
    For $j = n+1$: the subproduct $u_1u_2\cdots u_n$ is a monomial of total degree $n<k = |\alpha|$, thus it is also not in $(x^{\alpha})$. In both cases, the $n$-fold subproduct is not in $(x^{\alpha})$, so the ideal is not graded $n$-absorbing. Hence $I$ cannot be graded $n$-absorbing, a contradiction. Thus $\vert \alpha \vert \leq n$. 

Proof of $(2) \implies (1)$: Suppose $\vert \alpha \vert \leq n$. By Corollary 3.7 of Choi and Walker \cite{choi2019n}, $\omega((x^{\alpha})) = \vert \alpha \vert$. Hence $\omega((x^{\alpha})) \leq n$, so $(x^{\alpha})$ is $n$-absorbing ideal of $R$. By Lemma \ref{lem:ntogradedn}, it follows that $(x^{\alpha})$ is graded $n$-absorbing. 

Proof of $(2) \iff (3)$: This equivalence follows immediately from the definition of the discrete
simplex
\[
\Delta_n^m=\{\alpha\in\mathbb N^m:|\alpha|\leq n\}.
\]
Indeed, the condition $\vert\alpha\vert\leq n$ is precisely the condition
that the exponent vector $\alpha$ lies in $\Delta_n^m$.

\end{proof}

In general, we can relax the assumption in Theorem \ref{thm3-bijection} on $F$ being a field to $k$ being a UFD and having an equivalent structure that follows. To see why some restriction on the coefficient ring is necessary, consider $k = \mathbb{Z}_{6}$. Then $2 \cdot 3 = 0$ and in $k[x]$ we have $(2x)(3x)=0$. Thus factoring no longer controls absorbing behavior since zero divisors introduce further annihilation. 

\subsection{The Cayley Graph Interpretation} 

The exponent vector correspondence developed above identifies the set of principal monomial ideals with the lattice points of $\mathbb{N}^{m}$. This correspondence also preserves the local structure of the divisibility poset. In particular, the cover relations among principal monomial ideals are precisely the elementary lattice steps obtained by adding a standard basis vector. This observation allows us to identify the Hasse diagram with a Cayley graph. Under the correspondence $\alpha \leftrightarrow x^{\alpha}$, the edge represents multiplication by $x_{i}$.

\begin{theorem}\label{thm:1skeleton}
Let $F$ be a field and let $R=F[x_1,\dots,x_m]$. The Hasse diagram of the poset of principal monomial ideals ordered by reverse inclusion is canonically isomorphic to the $1$-skeleton of the Cayley graph
\[
\operatorname{Cay}(\mathbb{N}^{m},\{e_1,\dots,e_m\}).
\]

\end{theorem}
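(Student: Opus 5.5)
The plan is to reduce Theorem \ref{thm:1skeleton} to Theorem \ref{thm:nmlatticehasse}. That result already identifies the Hasse diagram of principal monomial ideals, ordered by reverse inclusion, with the lattice graph on $\mathbb{N}^m$. What remains is to check that this lattice graph is exactly the underlying undirected graph of $\operatorname{Cay}(\mathbb{N}^m,\{e_1,\dots,e_m\})$. Here the ``$1$-skeleton'' is read as the undirected simple graph obtained by forgetting orientations, as agreed in the Cayley graph subsection.

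First, I fix the vertex map. I use the bijection $\Phi:\alpha\mapsto (x^\alpha)$ from the proof of Theorem \ref{thm:nmlatticehasse}. Both graphs then have vertex set identified with $\mathbb{N}^m$: the Cayley graph by definition, and the Hasse diagram via $\Phi$. The word ``canonical'' in the statement refers to the fact that no choices are made beyond $\Phi$.

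Second, I match the edges. In the Cayley graph there is a directed edge $\alpha\to\alpha+e_i$ for each $\alpha\in\mathbb{N}^m$ and each $i$. After forgetting orientation, $\{\alpha,\beta\}$ is an edge if and only if $\beta-\alpha=\pm e_i$ for some $i$. In the Hasse diagram, $\{(x^\alpha),(x^\beta)\}$ is an edge if and only if one ideal covers the other. By Theorem \ref{thm:nmlatticehasse}, this happens exactly when $\beta=\alpha+e_i$ or $\alpha=\beta+e_i$. Hence $\Phi$ carries the edge sets bijectively onto each other.

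The main point needing care is the cover characterization. The argument in Theorem \ref{thm:nmlatticehasse} is terse, so I would reprove it cleanly in this form: if $\alpha<\beta$ coordinatewise and $|\beta-\alpha|\ge 2$, choose $i$ with $\beta_i>\alpha_i$. Then $\gamma=\alpha+e_i$ satisfies $\alpha<\gamma<\beta$, so $\beta$ does not cover $\alpha$. Conversely, if $|\beta-\alpha|=1$, no lattice point lies strictly between $\alpha$ and $\beta$, because total degree is strictly monotone along $<$.

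Two further checks finish the proof. The Cayley graph has no loops, since $e_i\neq 0$. It also has no multiple edges, since $\alpha+e_i=\alpha+e_j$ forces $i=j$. The undirected Cayley graph is therefore simple, so the edge bijection gives a graph isomorphism. Finally, I would note that the edge $\alpha\leftrightarrow\alpha+e_i$ corresponds to multiplication by $x_i$.
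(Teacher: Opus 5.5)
Your proof is correct and follows essentially the same route as the paper. Both arguments identify the vertices via $\Phi(\alpha)=(x^\alpha)$ and show that the covers are exactly $\beta=\alpha+e_i$: when $|\beta-\alpha|\ge 2$ an intermediate lattice point exists, and when $|\beta-\alpha|=1$ strict monotonicity of total degree rules one out. These covers then match the undirected Cayley edges. Your explicit witness $\gamma=\alpha+e_i$ (chosen with $\beta_i>\alpha_i$) and your check that the undirected Cayley graph is simple make precise two points the paper leaves implicit.
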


\begin{proof}
Define
\[
\Phi(\alpha)=(x^\alpha).
\]
By the previous subsection, $\Phi$ is a bijection between $\mathbb N^m$ and the principal monomial ideals and preserves the coordinatewise order.

Suppose
\[
(x^\beta)\subsetneq(x^\alpha).
\]
Then $\alpha\leq\beta$. This inclusion is a cover exactly when there is no intermediate exponent vector between $\alpha$ and $\beta$.

If $|\beta-\alpha|\geq2$, then there exists a nonzero proper vector $\delta$ with
\[
0<\delta<\beta-\alpha,
\]
giving an intermediate lattice point
\[
\alpha<\alpha+\delta<\beta.
\]
Hence the inclusion is not a cover.

Therefore a cover must satisfy
\[
|\beta-\alpha|=1,
\]
which implies
\[
\beta-\alpha=e_i
\]
for some standard basis vector.

Conversely, if
\[
\beta=\alpha+e_i,
\]
then no lattice point lies strictly between $\alpha$ and $\beta$, so the corresponding inclusion is a cover.

Thus the edges in the Hasse diagram are exactly
\[
\alpha\leftrightarrow\alpha+e_i.
\]
These are precisely the edges of
\[
\operatorname{Cay}(\mathbb N^m,\{e_1,\dots,e_m\}),
\]
so the Hasse diagram is canonically isomorphic to the $1$-skeleton of the Cayley graph.
\end{proof}

Combining this graph-theoretic interpretation with Theorem \ref{thm3-bijection}, the graded $n$-absorbing principal monomial ideals are precisely the vertices contained in the discrete simplex $\Delta_{n}^{m}$. Hence their divisibility structure is captured by the induced subgraph of the Cayley graph restricted to this simplex.

\begin{figure}[ht]
    \centering
    \includegraphics[width=\textwidth]{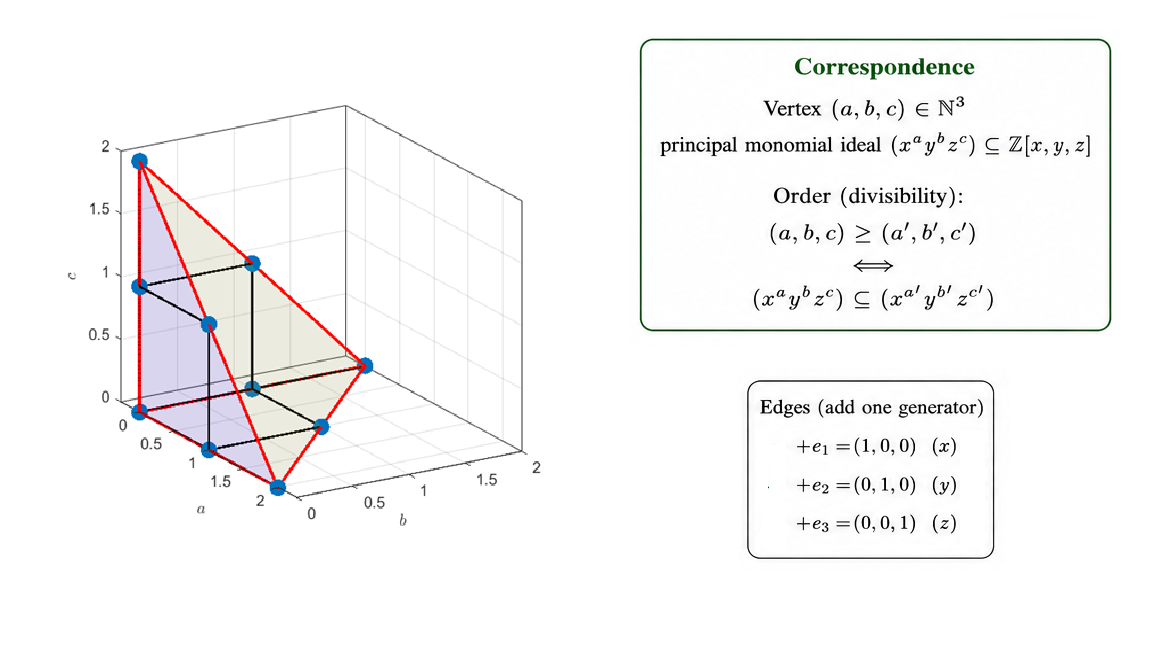}
    \caption{The Hasse diagram of principal monomial ideals in $\mathbb{Z}[x,y,z]$ is isomorphic to the $1$-skeleton of the Cayley graph $\operatorname{Cay}(\mathbb{N}^{3},\{e_{1},e_{2},e_{3}\})$. The highlighted vertices form the induced subgraph on the simplex $\Delta_{2}^{3}=\{(a,b,c)\in\mathbb{N}^{3}:a+b+c\leq2\}$, corresponding to the graded $2$-absorbing principal monomial ideals. The shaded faces are included only to visualize the surrounding tetrahedral region and do not represent edges of the $1$-skeleton.}
    \label{fig:1skeleton}
\end{figure}

\subsection{Consequences of the Model}

The simplex characterization provides more than a classification of graded $n$-absorbing principal monomial ideals; it gives a combinatorial parameter space in which several algebraic properties can be interpreted geometrically. In particular, the exponent vector $\alpha$ simultaneously records the absorbing behavior of the ideal, its position within the monomial lattice, and algebraic information arising from the generator $x^\alpha$.

The consequences developed below demonstrate the strength of this correspondence. First, combining the simplex characterization with known results on the relationship between $n$-absorbing and strongly $n$-absorbing ideals yields an equivalence between these notions in the principal monomial setting. Next, we examine the graded maximal ideal and show that its powers arise naturally as the collection of all monomial ideals lying on a fixed degree layer of the simplex. This perspective allows us to enumerate graded $n$-absorbing principal monomial ideals and connect the resulting counting functions to Hilbert series. Finally, we show that the same exponent-vector framework captures homological and dimension-theoretic information, linking the combinatorial position of a lattice point to invariants of the corresponding quotient ring.

\subsubsection{Equivalence with Graded Strongly n-Absorbing Ideals}

Within the class of principal monomial ideals, the distinction between graded and $n$-absorbance vanishes. The simplex characterization of Theorem \ref{thm3-bijection} shows that the graded $n$-absorbing condition is determined entirely by the exponent vector of the monomial generator $(x^{\alpha})$ is graded $n$-absorbing if, and only if, $\vert \alpha \vert \leq n$. Moreover, results of Choi and Walker \cite{choi2019n} and Secord \cite{secord24} imply that the same numerical condition governs $n$-absorbing and strongly $n$-absorbing behavior. Consequently, the discrete simplex
\[
\Delta_n^m=\{\alpha\in\mathbb{N}^m:|\alpha|\leq n\}
\]
serves as a common parameter space for all three notions of absorbance in the principal monomial setting. We summarize this equivalence below.

\begin{theorem}\label{thm:princgradedstrngabs}
 
  Let $F$ be a field, let $R = F[x_{1},x_{2},\dots,x_{m}]$ and let $I=(x^\alpha)$ be a proper principal monomial ideal of $R$. Then the following are equivalent:
    \begin{enumerate}
        \item $I$ is $n$-absorbing.
        \item $I$ is graded $n$-absorbing.
        \item $I$ is graded strongly $n$-absorbing.
        \item $\vert \alpha \vert \leq n$
        \item The vector $\alpha$ lies in the discrete simplex
        \[
        \Delta_{n}^{m} = \{\alpha \in \mathbb{N}^{m}: \vert \alpha \vert \leq n\}.
        \]
    \end{enumerate}
\end{theorem}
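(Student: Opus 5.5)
We will organize the equivalences around condition (4), $|\alpha|\le n$, which serves as the hub. The equivalences (2)$\iff$(4)$\iff$(5) are exactly Theorem \ref{thm3-bijection}, so nothing new is needed there. For (1)$\iff$(4) we use Corollary 3.7 of Choi and Walker \cite{choi2019n}, which gives $\omega((x^\alpha))=|\alpha|$. Since an ideal is $n$-absorbing precisely when $\omega(I)\le n$, this yields (1)$\iff$(4) directly. Lemma \ref{lem:ntogradedn} independently recovers (1)$\Rightarrow$(2).

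It remains to attach (3). For (3)$\Rightarrow$(2), suppose $I$ is graded strongly $n$-absorbing and $a_1\cdots a_{n+1}\in I$ with each $a_i\in h(R)$.
\begin{itemize}
\item The principal ideals $I_i=(a_i)$ are graded, because their generators are homogeneous.
\item Their product satisfies $I_1\cdots I_{n+1}=(a_1\cdots a_{n+1})\subseteq I$.
\item Strong absorption therefore gives $I_1\cdots\widehat{I_k}\cdots I_{n+1}\subseteq I$ for some $k$.
\item In particular $a_1\cdots\widehat{a_k}\cdots a_{n+1}\in I$, which is (2).
\end{itemize}

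For (1)$\Rightarrow$(3) we invoke Secord's theorem \cite{secord24}, which states that every $n$-absorbing ideal is strongly $n$-absorbing. This says that whenever $J_1\cdots J_{n+1}\subseteq I$ for arbitrary ideals $J_i$ of $R$, some $n$-fold subproduct lies in $I$. Graded ideals are special ideals, so restricting this conclusion to graded $J_i$ gives that $I$ is graded strongly $n$-absorbing. The ideal $I=(x^\alpha)$ is itself graded and proper, so the definition applies.

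The chain (3)$\Rightarrow$(2)$\Rightarrow$(4)$\Rightarrow$(1)$\Rightarrow$(3), together with (4)$\iff$(5), closes the cycle.

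The main obstacle is the step (1)$\Rightarrow$(3), since it relies on the external result of Secord. If we wanted a self-contained argument instead, we would prove directly that $|\alpha|\le n$ forces graded strong absorption. Suppose $J_1\cdots J_{n+1}\subseteq(x^\alpha)$ and every $n$-fold subproduct fails to lie in $(x^\alpha)$. Then for each $k$ we can pick homogeneous elements $b_{i}^{(k)}\in J_i$, for $i\ne k$, whose product is not divisible by $x^\alpha$; these can be taken homogeneous because the $J_i$ are graded. We would then combine them into single elements of each $J_i$ whose full product still lies outside $(x^\alpha)$, contradicting the containment. The combining step is the usual sum-of-elements trick from the $n$-absorbing literature, and it is delicate because $(x^\alpha)$ is not prime. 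For that reason we would defer to \cite{secord24}, which is the route the surrounding discussion of the paper indicates.
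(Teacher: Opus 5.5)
Your proof is correct and follows the paper's route. Condition (4) is the hub: Theorem \ref{thm3-bijection} gives (2)$\iff$(4)$\iff$(5), Choi--Walker's $\omega((x^\alpha))=|\alpha|$ gives (1)$\iff$(4), and Secord's theorem, restricted to graded ideals, gives (1)$\Rightarrow$(3). Your closing step (3)$\Rightarrow$(2), via the graded principal ideals $(a_i)$ generated by homogeneous elements, is more careful than the paper's, which claims (3)$\Rightarrow$(1) ``by definition'' even though graded strong absorption only tests graded ideals; the speculative self-contained sketch in your last paragraph is not needed and can be dropped.
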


This equivalence is a consequence of the rigid combinatorial structure of principal monomial ideals and does not hold for arbitrary graded ideals. In general, graded and $n$-absorbance are distinct notions because the graded condition restricts the absorbing test to homogeneous elements.

\begin{proof}
The equivalence between (4) and (5) is immediate from the definition of
\(\Delta_n^m\). By Theorem \ref{thm3-bijection}, conditions (2), (4), and
(5) are equivalent.

By a result of Choi and Walker \cite{choi2019n}, a principal monomial ideal
\(I=(x^\alpha)\) satisfies
\[
\omega(I)=|\alpha|.
\]
Hence \(I\) is \(n\)-absorbing if and only if
\[
|\alpha|\leq n,
\]
which shows that (1), (4), and (5) are equivalent.

Finally, Secord's theorem \cite{secord24} establishes that every
\(n\)-absorbing ideal is strongly \(n\)-absorbing. Therefore, in this
setting,
\[
(1)\implies(3).
\]
Since strongly \(n\)-absorbing ideals are \(n\)-absorbing by definition, we
also have
\[
(3)\implies(1).
\]
Thus (1) and (3) are equivalent, completing the proof.
\end{proof}

\subsubsection{Powers of the Graded Maximal Ideal as Simplex Layers}

The simplex model also identifies the powers of the graded maximal ideal as the natural aggregation of principal monomial ideals of fixed absorbing index. We show that for graded maximal ideal $\mathcal{M}$ of $R$, taking powers of $\mathcal{M}$, denoted $\mathcal{M}^k$, directly corresponds to determining the graded absorption property of the ideal, which further identifies $\mathcal{M}^k$ as the sum of every principal monomial ideal of exact absorbing index $k$. 

\noindent Define
\[\omega_{gr}(I) = \textnormal{min}\{n \in \mathbb{N} : I \textnormal{ is graded $n$-absorbing}\}\] where we let $\omega_{gr}(I) = \infty$ if no such $n$ exists. We note that in general, $\omega_{gr}(I) \leq \omega(I)$ since the graded condition relies on only homogeneous elements. 
\begin{theorem}
   Let $F$ be a field, $R=F[x_1, \dots, x_m]$, and $\mathcal{M} = (x_1, \dots, x_m)$ be the graded maximal ideal of $R$. Then for every $k \geq 1:$
   \begin{enumerate}
       \item $\mathcal{M}^k$ is graded $k$-absorbing. For $k \geq 2$, $\mathcal{M}^k$ is not graded $(k-1)$-absorbing. For $k=1$, $\mathcal{M} $ is a graded prime ideal (and thus graded $1$-absorbing with $\omega_{gr}(\mathcal{M})=1$). 
       \item $\omega_{gr}{(\mathcal{M}^k)} = k$
      
   \end{enumerate}
\end{theorem}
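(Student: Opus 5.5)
The plan has three parts: show $\mathcal{M}^k$ is graded $k$-absorbing, show it is not graded $(k-1)$-absorbing for $k\geq 2$, and then read off $\omega_{gr}(\mathcal{M}^k)=k$.

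\emph{Upper bound.} A homogeneous element $f$ of degree $d$ lies in $\mathcal{M}^k$ exactly when $f=0$ or $d\geq k$. This holds because $\mathcal{M}^k=\bigoplus_{d\geq k}R_d$, so every monomial of degree at least $k$ lies in $\mathcal{M}^k$ and no nonzero form of degree less than $k$ does. Now let $f_1,\dots,f_{k+1}\in h(R)$ with $f_1\cdots f_{k+1}\in\mathcal{M}^k$. If some $f_i=0$, every $k$-fold subproduct containing that factor is $0\in\mathcal{M}^k$, so we are done. Otherwise we use that $R$ is a domain: the product is a nonzero form of degree $\sum\deg f_i\geq k$.
\begin{itemize}
\item If some $f_j$ has degree $0$, it is a nonzero constant, hence a unit. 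Omitting it leaves a nonzero form of the same degree $\geq k$, which lies in $\mathcal{M}^k$.
\item If every $\deg f_i\geq 1$, omitting any single factor leaves a product of $k$ forms, each of degree at least $1$. Its degree is at least $k$, so it lies in $\mathcal{M}^k$.
\end{itemize}
In either case some $k$-fold subproduct lies in $\mathcal{M}^k$, so $\mathcal{M}^k$ is graded $k$-absorbing. It is proper since $1\notin\mathcal{M}^k$.

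\emph{Lower bound for $k\geq 2$.} Take $u_1=\cdots=u_k=x_1$, which are homogeneous. Their product $x_1^k$ lies in $\mathcal{M}^k$, but each $(k-1)$-fold subproduct is $x_1^{k-1}$, a nonzero form of degree $k-1<k$, so it does not lie in $\mathcal{M}^k$. This mirrors the $(1)\Rightarrow(2)$ argument of Theorem \ref{thm3-bijection}, with $x_1^k$ playing the role of $x^\alpha$. Hence $\mathcal{M}^k$ is not graded $(k-1)$-absorbing.

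\emph{The case $k=1$ and part (2).} For $k=1$, the quotient $R/\mathcal{M}\cong F$ is a field, so $\mathcal{M}$ is prime and hence graded prime, i.e.\ graded $1$-absorbing. The upper-bound argument above also gives this directly. For part (2), combine the two bounds with Theorem \ref{thm:mn}: graded $k$-absorbing implies graded $j$-absorbing for all $j\geq k$. Failure at $k-1$ together with monotonicity forces failure at every $j<k$, since graded $j$-absorbing would imply graded $(k-1)$-absorbing. Therefore $\omega_{gr}(\mathcal{M}^k)=k$; for $k=1$ the minimum is $1$ because indices start at $1$.

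The main obstacle is the upper bound. There we must justify that membership of a homogeneous element in $\mathcal{M}^k$ is decided by degree alone, and handle zero and constant factors carefully; the domain property of $R$ is exactly what makes this work.
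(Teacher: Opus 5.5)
Your proof is correct and follows the paper's argument: the same case split between a degree-zero (unit) factor and all factors of positive degree for the upper bound, and the same witness $x_1,\dots,x_1$ with product $x_1^k$ for the lower bound. You are slightly more careful than the paper in two places: you handle zero factors, which the paper excludes by treating only nonzero $f_i$, and you explicitly invoke Theorem \ref{thm:mn} to pass from failure at index $k-1$ to $\omega_{gr}(\mathcal{M}^k)\geq k$, a step the paper leaves implicit.
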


\begin{proof}
    The ideal $\mathcal{M}^k$ is generated by all monomials of total degree equal to $k$. To see that $\mathcal{M}^k$ is graded $k$-absorbing, let $f_1, \dots, f_{k+1} \in h(R)$ be the nonzero homogeneous elements with $f_1\cdots f_{k+1} \in \mathcal{M}^k$. Each $f_i$ is a nonzero homogeneous polynomial, so either there exists some $i$ for which deg$(f_i)=0$ or deg$(f_i) \geq 1$ for all $i$. 

     If there exists $ f_i$ such that deg$(f_i)=0$, then $f_i \in F^{\times}$ is a unit. Thus the $k$-fold subproduct $\prod_{j\neq i} f_j = (f_1 \dots f_{k+1})/f_i \in \mathcal{M}^k$ and $\mathcal{M}^k$ is graded $k$-absorbing. 

   If for all $i$, deg$(f_i)\geq 1$, then each $f_i \in \mathcal{M}$. For fixed index $i$, the $k$-fold subproduct $\prod_{j\neq i}f_j$ has degree greater than or equal to $k$ (since $k$ terms each contribute degree $\geq 1)$, so $\prod_{j\neq i}f_j \in \mathcal{M}^k$ and we have that $\mathcal{M}^k$ is graded $k$-absorbing. 

    To see $\omega_{gr}{(\mathcal{M}^k)} = k$, first it is clear that when $k=1$, $\mathcal{M}^k$ is graded $1$-absorbing. %\mathcal{M}^k is maximal, and R/M is a field so \mathcal{M} is prime and thus graded 1-absorbing. 
    For $k\geq 2$, consider the $k$ homogeneous elements $f_1 = f_2 = \dots = f_k = x_1 \in h(R)$. Then deg$(f_1 \dots f_k) = k$, so $x_1^k \in \mathcal{M}^k$. If we take any $(k-1)$-fold subproduct of $f_i$'s, that will equal $x_1^{k-1}$, which has degree $k-1$, and thus $x_1^{k-1} \notin \mathcal{M}^k$. Thus $\mathcal{M}^k$ is not graded $(k-1)$-absorbing, so $\omega_{gr}(\mathcal{M}^k) \geq k$. Combined with (1), we see that $\omega_{gr}(\mathcal{M}^k)=k$. 
\end{proof}

These properties of $\mathcal{M}^k$ effect a bridge between the single ideal $\mathcal{M}^k$ and the full population of principal monomial ideals across all absorbing indices, which we show can be counted by a natural connection to the Hilbert series.

\subsubsection{Enumeration and Hilbert Series}

\begin{proposition}
    The generating function counting proper graded $k$-absorbing principal monomial ideals in $F[x_1, \dots, x_m]$ by the parameter $k$ is
    $$\sum_{k=1}^{\infty}\left(\binom{k+m}{m}-1\right)t^k = \frac{1}{(1-t)^{m+1}}-\frac{1}{1-t}$$ where $\binom{k+m}{m}-1$ counts the proper ideals. In particular, the count including the unit ideal satisfies $$\sum_{k=0}^{\infty}\binom{k+m}{m}t^k = \frac{1}{(1-t)^{m+1}}$$ which is the Hilbert series in $m+1$ variables. 
\end{proposition}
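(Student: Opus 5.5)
Via Theorem \ref{thm3-bijection}, the counting reduces to counting lattice points. A principal monomial ideal $(x^\alpha)$ is graded $k$-absorbing exactly when $\alpha\in\Delta_k^m$. By Theorem \ref{thm:nmlatticehasse}, distinct $\alpha$ give distinct ideals, so the number of graded $k$-absorbing principal monomial ideals (including the unit ideal) is $|\Delta_k^m|$. The proper ones are obtained by discarding $\alpha=0$, which corresponds to $(1)$.

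The first step is the standard stars-and-bars count. The number of $\alpha\in\mathbb{N}^m$ with $|\alpha|=d$ is $\binom{d+m-1}{m-1}$. To handle the inequality $|\alpha|\le k$, introduce a slack coordinate $a_{m+1}=k-|\alpha|\ge 0$. This turns the count into the number of $(a_1,\dots,a_{m+1})\in\mathbb{N}^{m+1}$ with sum exactly $k$, giving $|\Delta_k^m|=\binom{k+m}{m}$. (Alternatively, sum $\sum_{d=0}^k\binom{d+m-1}{m-1}$ using the hockey-stick identity.) Hence the proper count is $\binom{k+m}{m}-1$.

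The second step is the generating function. I would use the expansion $\frac{1}{(1-t)^{m+1}}=\sum_{k\ge 0}\binom{k+m}{m}t^k$, which follows either from the negative binomial series or from the product $\prod_{i=1}^{m+1}\frac{1}{1-t}$ counting $(m+1)$-tuples in $\mathbb{N}^{m+1}$ by total sum. The latter is the same slack-variable bijection above, which makes the identification with the Hilbert series of $F[x_1,\dots,x_{m+1}]$ transparent. Subtracting $\frac{1}{1-t}=\sum_{k\ge0}t^k$ removes one from each coefficient. The $k=0$ term of the difference is $\binom{m}{m}-1=0$, so the sum can start at $k=1$, as in the statement.

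No step is a genuine obstacle. The only point needing care is bookkeeping: the summation index starts at $k=1$ in the first formula and at $k=0$ in the second, and one must check that the $k=0$ term vanishes so the two expressions agree. One should also remark that the generating function counts the \emph{cumulative} sets $\Delta_k^m$, consistent with the nesting from Theorem \ref{thm:mn}, rather than ideals of exact index $k$.
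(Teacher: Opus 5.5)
Your proposal is correct and follows the same route as the paper. Theorem~\ref{thm3-bijection} reduces the count to lattice points of $\Delta_k^m$, which number $\binom{k+m}{m}$, and discarding $\alpha=0$ (equivalently, subtracting $\frac{1}{1-t}$) gives the stated series; your slack-variable count and your check that the $k=0$ coefficient vanishes fill in what the paper calls clear and a straightforward calculation. One cosmetic point: the definition of graded $n$-absorbing requires $I\neq R$, so the unit ideal is not itself graded $k$-absorbing, and the count including the unit ideal should be read as $|\Delta_k^m|$ rather than as a count of graded $k$-absorbing ideals.
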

\begin{proof}
    By Theorem \ref{thm3-bijection} the proper graded $k$-absorbing principal monomial ideals correspond bijectively to the nonzero lattice of points in the simplex $\{\alpha \in \mathbb{N}^m : |\alpha| \leq k\}$. It is clear then that the total count including $\alpha=0$ is $|\{\alpha \in \mathbb{N}^m : |\alpha| \leq k\}| = \binom{k+m}{m}$. It is a straightforward calculation to see that subtracting the unit ideal contribution gives $\frac{1}{(1-t)^{m+1}}-\frac{1}{1-t}$.   
\end{proof}
From this proposition, we can deduce a count for the number of proper graded $n$-absorbing principal monomial ideals in $R$. 
\begin{corollary}
    Let $F$ be a field, and $R=F[x_1, \dots, x_m]$ Considering only the proper ideals (that is, $\alpha \neq 0$) we have the following: 
    \begin{enumerate}
        \item The number of proper graded $n$-absorbing principal monomial ideals in $R$ is $$\binom{n+m}{m}-1$$
        \item The number of principal monomial ideals with $\omega_{gr} = n$ is $$\binom{n+m-1}{m-1}$$
        \item The generating function over $n \geq 0$ and $m \geq 1$ counting the unit ideal (that is, including $\alpha = 0$) is $$\frac{s}{(1-t)(1-t-s)}$$
    \end{enumerate}
\end{corollary}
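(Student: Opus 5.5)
The plan is to reduce all three parts to counting lattice points, using Theorem \ref{thm3-bijection} together with the bijection $\Phi(\alpha)=(x^\alpha)$ of Theorem \ref{thm:nmlatticehasse}. No new algebra should be needed. For part (1), Theorem \ref{thm3-bijection} says that the proper graded $n$-absorbing principal monomial ideals correspond bijectively to the nonzero points of $\Delta_n^m$. I will count $\Delta_n^m$ by stars and bars. Adding a slack coordinate $a_{m+1}=n-|\alpha|\geq 0$ gives a bijection with solutions of $a_1+\cdots+a_{m+1}=n$ in $\mathbb{N}^{m+1}$, so $|\Delta_n^m|=\binom{n+m}{m}$. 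Removing $\alpha=0$, which corresponds to the unit ideal, gives $\binom{n+m}{m}-1$. This agrees with the coefficient already computed in the preceding Proposition.

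For part (2), I first translate the condition $\omega_{gr}(I)=n$ into a condition on $\alpha$, taking $n\geq 1$ since we only count proper ideals. By the monotonicity of Theorem \ref{thm:mn}, or directly from Theorem \ref{thm3-bijection}, the ideal $(x^\alpha)$ is graded $j$-absorbing exactly when $|\alpha|\leq j$. Hence the least such $j$ is $|\alpha|$, so $\omega_{gr}((x^\alpha))=|\alpha|$. The ideals with $\omega_{gr}=n$ are therefore exactly those with $|\alpha|=n$, that is, the points of the layer $\Delta_n^m\setminus\Delta_{n-1}^m$. Stars and bars counts these as $\binom{n+m-1}{m-1}$. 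As a consistency check, $\binom{n+m}{m}-\binom{n+m-1}{m}=\binom{n+m-1}{m-1}$ by Pascal's rule.

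For part (3), I will read the generating function as $\sum_{m\geq1}\sum_{n\geq0}\binom{n+m}{m}t^n s^m$, which counts the unit ideal as well. I then sum in two stages as formal power series. The inner sum is the Hilbert series identity from the preceding Proposition:
\[
\sum_{n\geq0}\binom{n+m}{m}t^n=\frac{1}{(1-t)^{m+1}}.
\]
The outer sum is a geometric series in $s/(1-t)$:
\[
\sum_{m\geq1}\frac{s^m}{(1-t)^{m+1}}=\frac{1}{1-t}\cdot\frac{s/(1-t)}{1-s/(1-t)}=\frac{s}{(1-t)(1-t-s)}.
\]

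The only step that needs care is making sure the conventions match. In part (2) I must make explicit that $\omega_{gr}$ equals $|\alpha|$; this is where Theorem \ref{thm3-bijection} is used. In part (3) the indexing must be $n\geq0$ and $m\geq1$ with the unit ideal included, so that the closed form comes out exactly as stated. Everything else is routine binomial bookkeeping.
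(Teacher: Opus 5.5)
Your proof is correct and takes essentially the paper's route. The paper gives no separate argument: it deduces the corollary from the preceding Proposition, which counts $|\Delta_n^m|=\binom{n+m}{m}$ via Theorem~\ref{thm3-bijection} and gives the Hilbert series $1/(1-t)^{m+1}$. You supply the steps the paper leaves implicit, namely that $\omega_{gr}((x^\alpha))=|\alpha|$ for $\alpha\neq 0$ and the geometric-series sum over $m\geq 1$ that yields $\frac{s}{(1-t)(1-t-s)}$.
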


This counting argument shows that $\mathcal{M}^k$ can be identified as the largest ideal built from principal pieces that are minimally $k$-absorbing, sitting above all $\binom{k+m-1}{m-1}$ of them in the divisibility poset of Theorem \ref{thm:nmlatticehasse}; $\mathcal{M}^k$ is the sum of all principal ideals $(x^{\alpha})$ with $|\alpha|=k$, thus $\mathcal{M}^k$ is the smallest ideal that contains all such ideals simultaneously. 

\begin{corollary}
    For every $k \geq 1$, $\mathcal{M}^k = \sum\limits_{|\alpha|=k} (x^{\alpha})$, a sum of exactly $\binom{k+m-1}{m-1}$ principal monomial ideals, each of exact graded absorbing index $k$. 
\end{corollary}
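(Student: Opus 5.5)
The plan has three parts: establish the ideal equality $\mathcal{M}^k=\sum_{|\alpha|=k}(x^\alpha)$, count the summands, and then record the exact graded absorbing index of each summand.

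\emph{The equality.} By definition $\mathcal{M}^k$ is generated by all products $x_{i_1}\cdots x_{i_k}$ of $k$ variables. Each such product is a monomial $x^\alpha$ with $|\alpha|=k$. Conversely, every monomial with $|\alpha|=k$ can be written as such a product. Hence both ideals have the same generating set, namely all monomials of total degree $k$, and the equality follows at once. Since an ideal generated by a set is the sum of the principal ideals of its generators, this gives the displayed sum.

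\emph{The count.} Theorem~\ref{thm:nmlatticehasse} identifies principal monomial ideals bijectively with exponent vectors, so distinct $\alpha$ give distinct summands. The number of summands is therefore the number of $\alpha\in\mathbb{N}^m$ with $|\alpha|=k$. A stars-and-bars count gives $\binom{k+m-1}{m-1}$. Alternatively, this is the difference $\binom{k+m}{m}-\binom{k-1+m}{m}$ of consecutive simplex sizes from the preceding enumeration proposition, which agrees with part (2) of the preceding corollary.

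\emph{The exact index.} For each $\alpha$ with $|\alpha|=k\ge 1$, Theorem~\ref{thm3-bijection} shows that $(x^\alpha)$ is graded $k$-absorbing. The same theorem, applied with $n=k-1$, shows that $(x^\alpha)$ is not graded $(k-1)$-absorbing, because $|\alpha|>k-1$. When $k=1$ the lower bound is automatic, since $\omega_{gr}\ge 1$ by definition for proper ideals; here $k\ge1$ guarantees $(x^\alpha)$ is proper. Together these give $\omega_{gr}((x^\alpha))=k$.

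None of these steps is a real obstacle. The one point that needs care is the index claim, where Theorem~\ref{thm3-bijection} has to be invoked at both $n=k$ and $n=k-1$, and the edge case $k=1$ has to be handled separately.
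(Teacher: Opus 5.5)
Your proposal is correct and follows essentially the same route as the paper. The paper justifies the corollary informally by combining three facts: $\mathcal{M}^k$ is generated by all monomials of total degree $k$ (noted in the proof of the preceding theorem), the count $\binom{k+m-1}{m-1}$ from part (2) of the preceding corollary, and Theorem~\ref{thm3-bijection} for the exact index.

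One small point should be tightened. Minimality of $\omega_{gr}$ needs non-absorbance for every $n\le k-1$, not only $n=k-1$. You can get this by applying Theorem~\ref{thm3-bijection} at each such $n$, or from your $n=k-1$ case together with the monotonicity of Theorem~\ref{thm:mn}.
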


\textbf{Example 1:} Consider $F[x]$, $(m=1)$. Then the number of proper graded $n$-absorbing principal monomial ideals is $n$: $(x), (x^2), \dots, (x^n)$, and the number of principal monomial ideals where $\omega_{gr} = n$ is just 1, namely, $(x^n)$. 

\textbf{Example 2:} Consider $F[x,y], (m=2)$. Then the number of proper graded $n$-absorbing principal monomial ideals is $\binom{n+2}{2}-1 = \frac{(n+1)(n+2)}{2}-1$. For $n=2$, there are five such ideals: $(x), (y), (x^2), (xy), (y^2)$ and these correspond to the five nonzero lattice points in $\{a+b \leq 2, (a,b) \neq (0,0)\} \subset \mathbb{N}^2$. 

\textbf{Example 3:} Consider $F[x,y,z], (m=3)$. For $n=2$, we see that there are $9$ proper, graded $2$-absorbing principal monomial ideals: $(x)$, $(y)$, $(z)$, $(x^2)$, $(y^2)$, $(z^2)$, $(xy)$, $(xz)$, $(yz)$. There are exactly three ideals where $\omega_{gr} = 1$: $(x), (y), (z)$ (which matches the count $\binom{3}{2}=3$). There are six ideals where $\omega_{gr}=2$: $(x^2)$, $(y^2)$, $(z^2)$, $(xy)$, $(xz)$, $(yz)$ (which matches the count $\binom{4}{2} = 6$).

\subsubsection{Algebraic Invariants Associated to the Exponent Vector}
The simplex characterization identifies graded $n$-absorbing principal monomial ideals through the numerical condition $\vert \alpha \vert$. Since the same exponent vector determines the degree of the monomial generator, it is natural to ask whether this combinatorial description also records algebraic invariants of the corresponding quotient. For principal monomial ideals, the answer is particularly simple: the total degree of $\alpha$ determines the unique nontrivial shift in the minimal graded free resolution of $R/I$

The inclusion of homological invariants is motivated by the fact that the simplex model does not merely classify absorbing behavior; it identifies exponent vectors as a common parameter space for both ideal-theoretic and homological properties of principal monomial ideals.

\begin{proposition} \label{prop:homologicalinv}
Let \( I=(x_1^{a_1}\cdots x_m^{a_m}) \subseteq R =F[x_1,\dots,x_m] \) and let $\alpha=(a_1,\dots,a_m).$  Then \[ \dim(R/I) = m-1 \] and the minimal graded free resolution of \(R/I\) is \[ 0 \to R(-\vert \alpha \vert ) \to R \to R/I \to 0. \] Consequently, \[ \beta_{0,0}(R/I)=1, \qquad \beta_{1,|\alpha|}(R/I)=1, \] and all other graded Betti numbers vanish. 
\end{proposition}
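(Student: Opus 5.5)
The plan is to treat the generator $f=x^{\alpha}$ as a single homogeneous nonzerodivisor of degree $|\alpha|$ and read off both the resolution and the dimension from that fact. I would first record the tacit hypothesis that $I$ is proper, i.e.\ $\alpha\neq 0$; when $\alpha=0$ we have $R/I=0$ and neither formula makes sense. So throughout $|\alpha|\geq 1$, and $f$ is a nonzero homogeneous element of $R_{|\alpha|}$ lying in $\mathfrak{m}=(x_1,\dots,x_m)$.

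\emph{Resolution.} Define $d_1\colon R(-|\alpha|)\to R$ by $1\mapsto f$, where $1$ is the free generator of degree $|\alpha|$. This map is homogeneous of degree $0$ and its image is $I$, so the sequence
\[
0\to R(-|\alpha|)\xrightarrow{\;d_1\;} R\to R/I\to 0
\]
is exact at $R$ and at $R/I$. Exactness at $R(-|\alpha|)$ means $d_1$ is injective, which holds because $R$ is an integral domain and $f\neq 0$. Minimality requires $\operatorname{im}(d_1)=fR\subseteq\mathfrak{m}R$, which holds since $|\alpha|\geq 1$. Reading off the free modules gives $F_0=R=R(0)$ and $F_1=R(-|\alpha|)$. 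Hence $\beta_{0,0}=\beta_{1,|\alpha|}=1$ and every other graded Betti number is $0$, because minimal graded resolutions are unique up to isomorphism and so the Betti numbers are well defined.

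\emph{Dimension.} Since $f$ is neither zero nor a unit, Krull's principal ideal theorem gives that every minimal prime $\mathfrak p$ of $I$ has height exactly $1$; height $0$ is excluded because $R$ is a domain. As $R=F[x_1,\dots,x_m]$ is a finitely generated domain over a field, $\dim R/\mathfrak p=m-\operatorname{ht}\mathfrak p=m-1$. Taking the maximum over the finitely many minimal primes gives $\dim R/I=m-1$. For a more concrete route, the minimal primes are exactly the $(x_i)$ with $a_i>0$, and $R/(x_i)$ is a polynomial ring in $m-1$ variables.

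The main obstacle is the dimension formula $\dim R/\mathfrak p=\operatorname{ht}$-complement. This relies on the standard fact that affine domains are catenary and equidimensional, so I will cite it from \cite{eisenbud2013commutative} rather than prove it. The explicit description of the minimal primes $(x_i)$ sidesteps this and is what I would actually write out. Everything else is routine.
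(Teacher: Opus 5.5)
Your proposal is correct and follows essentially the same route as the paper. The resolution is the multiplication-by-$x^{\alpha}$ map $R(-|\alpha|)\to R$, which is injective because $R$ is a domain and minimal because $|\alpha|\geq 1$, and the dimension comes from Krull's principal ideal theorem together with the height formula for affine domains. Making $\alpha\neq 0$ explicit (the paper assumes it tacitly by calling $I$ a nonzero proper ideal) and passing through the minimal primes $(x_i)$ with $a_i>0$, instead of the paper's one-line $\dim R/I=\dim R-\operatorname{ht}(I)$, are welcome tightenings rather than a different argument.
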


\begin{proof}
Let $I = (x_{1}^{a_{1}} x_{2}^{a_{2}}\cdots x_{m}^{a_{m}}) = (x^{\alpha}) \subseteq R = F[x_{1},x_{2},\dots,x_{m}]$, where $\alpha = (a_{1},a_{2},\dots,a_{m}$). Since $I$ is a nonzero proper principal ideal of the polynomial ring, Krull's Principal Ideal Theorem implies that every minimal prime over $I$ has height 1 \cite{atiyah2018introduction}. Since $\dim(R)=m$, it follows that \cite{atiyah2018introduction}

\begin{equation*}
    \dim(R/I)=\dim(R)-\operatorname{ht}(I) = m-1. 
\end{equation*} 

Now let $d = \vert \alpha \vert = a_{1}+a_{2}+\dots+a_{m}$. Since $I=(x^{\alpha})$ is generated by the homogeneous monomial $x^{\alpha}$ of degree $d$, multiplication by $x^{\alpha}$ defines an injective graded homomorphism $\varphi$ where

\begin{equation*}
    \varphi:R(-d)\xrightarrow{\cdot x^\alpha}R.
\end{equation*}

Because $R$ is an integral domain, $\ker(\varphi) = 0$. Moreover, $\operatorname{Im}(\varphi) = x^{\alpha}R = (x^{\alpha}) = I$. Thus we obtain the exact sequence  

\begin{equation} 
0
\longrightarrow
R(-d)
\xrightarrow{\varphi}
R
\xrightarrow{\pi}
R/I
\longrightarrow
0,
\end{equation}
where $\pi$ denotes the natural quotient map \cite{eisenbud2013commutative, peeva2010graded}. Since $R(-d)$ and $R$ are free $R$-modules, this exact sequence is a graded free resolution of $R/I$ \cite{peeva2010graded}. Moreover, the matrix of the differential is the $1 \times 1$ matrix $[x^{\alpha}]$. Since $\deg(x^{\alpha}) = d > 0$, we have $x^{\alpha} \in (x_{1},\dots,x_{m})$. Thus every entry of every differential belongs to the homogeneous maximal ideal \cite{peeva2010graded}. By the standard criterion for minimal graded free resolutions over $F[x_{1},\dots,x_{m}]$, the resolution is minimal \cite{eisenbud2013commutative,peeva2010graded}. Therefore the minimal graded free resolution of $R/I$ is

\begin{equation*}
    0
\longrightarrow
R(-|\alpha|)
\longrightarrow
R
\longrightarrow
R/I
\longrightarrow
0.
\end{equation*}

It follows that the only nonzero graded Betti numbers are

\begin{equation*}
    \beta_{0,0}(R/I)=1
\qquad\text{and}\qquad
\beta_{1,|\alpha|}(R/I)=1,
\end{equation*}
while
\begin{equation*}
    \beta_{i,j}(R/I)=0
\end{equation*}
for all other pairs $(i,j)$ \cite{peeva2010graded}.
\end{proof}

The connection to the theory of graded $n$-absorbing ideals arises through Theorem \ref{thm3-bijection}, which identifies graded $n$-absorbing principal monomial ideals with the lattice points of the simplex
\[
\Delta_n^m=\{\alpha\in\mathbb N^m:|\alpha|\leq n\}.
\]
For example, consider the principal monomial ideal
\[
I=(x^{2}y)\subseteq F[x,y].
\]
The corresponding exponent vector is $\alpha=(2,1)$, so
\[
\vert \alpha \vert =3.
\]
Thus $\alpha\in\Delta_{3}^{2}$, and by Theorem \ref{thm3-bijection}, $I$ is a graded $3$-absorbing ideal. At the same time, Proposition \ref{prop:homologicalinv} shows that the total degree $\vert\alpha\vert=3$ determines the nontrivial shift in the minimal graded free resolution:
\[
0\longrightarrow R(-3)\longrightarrow R\longrightarrow R/I
\longrightarrow 0,
\]
with
\[
\beta_{0,0}(R/I)=1
\qquad\text{and}\qquad
\beta_{1,3}(R/I)=1.
\]
Thus, in this example, the same exponent vector that determines the $n$-absorbing behavior through the simplex condition also determines the nontrivial graded shift and Betti number of the quotient.

\section{Examples and Geometric Interpretation}

The simplex characterization allows graded $n$-absorbing principal monomial ideals to be studied through the geometry of their exponent vectors. Beyond providing a criterion for determining whether a principal monomial ideal is graded $n$-absorbing, this viewpoint reveals how the position of a lattice point encodes additional algebraic information. We conclude this section with examples illustrating the hierarchy of graded $n$-absorbance, the sharpness of the simplex boundary, and the recovery of homological invariants from the associated lattice point.

\

%3 absorbing which is not 2 absorbing as per JPAA request

\begin{example}[A graded 3-absorbing ideal which is not graded 2-absorbing]

Consider the principal monomial ideal $I = (x^{2}y) \subseteq F[x,y,z]$. The corresponding exponent vector is $(2,1,0) \in \mathbb{N}^{3}$. Since $\vert \alpha \vert = 2+1 = 3$, by Theorem \ref{thm3-bijection} it follows that $I$ is a graded 3-absorbing ideal. Furthermore, since $\vert \alpha \vert = 3 > 2$, the same characterization shows that $I$ cannot be a graded $2$-absorbing ideal. Hence, $I=(x^{2}y)$ provides an explicit example of a graded $3$-absorbing ideal which is not graded $2$-absorbing. In particular, this demonstrates that the framework developed here extends naturally beyond the graded $2$-absorbing case and captures genuinely higher-order absorbance phenomena. 

\end{example}

%Geometric Interpretation

\begin{example}[Crossing the simplex boundary]
This example illustrates the sharp geometric boundary between graded 3-absorbing and graded 4-absorbing principal monomial ideals. Consider the collection $\mathcal{S} \subseteq \mathbb{N}^{3}$ of exponent vectors given by
\[
\begin{aligned}
\mathcal{S}=\{&(3,0,0),(0,3,0),(0,0,3),(2,1,0),(1,2,0),\\
&(2,0,1),(1,0,2),(0,2,1),(0,1,2),(1,1,1)\}.
\end{aligned}
\]
These are precisely the lattice points $\alpha = (a,b,c) \in \mathbb{N}^{3}$ satisfying $ \vert \alpha \vert = a+b+c=3$, and therefore correspond to principal monomial ideals of the form $(x^{a}y^{b}z^{c})$ that are graded 3-absorbing by Theorem \ref{thm3-bijection}. Geometrically, these vectors are precisely the lattice points lying on the boundary layer $a+b+c=3$ of the simplex $\Delta_{3}^{3} = \{\alpha \in \mathbb{N}^{3} : \vert \alpha \vert \leq 3 \}$ as illustrated in Figure \ref{fig:hasse_simplex}.

\begin{figure}[ht]
\centering

\begin{subfigure}[t]{0.49\textwidth}
    \centering
    \includegraphics[width=\linewidth]{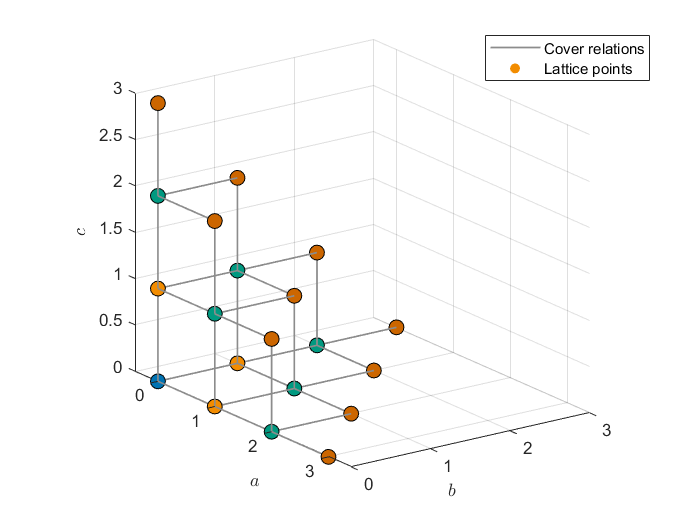}
    \caption{Hasse diagram of the truncated monomial lattice.}
\end{subfigure}
\hfill
\begin{subfigure}[t]{0.49\textwidth}
    \centering
    \includegraphics[width=\linewidth]{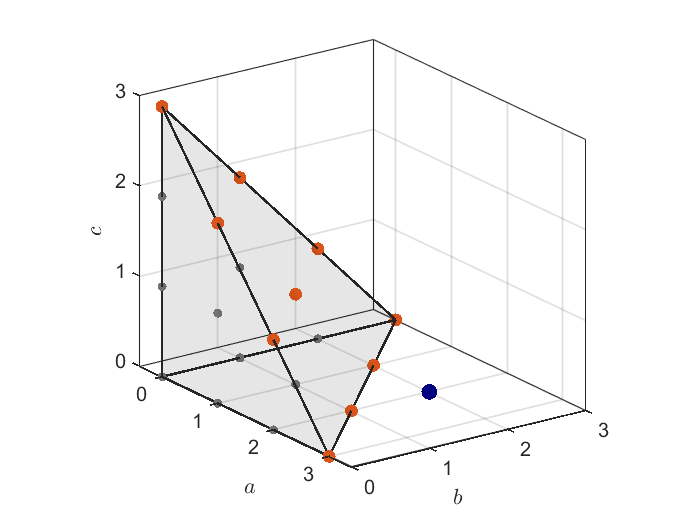}
    \caption{Lattice simplex representation.}
\end{subfigure}

\caption{Two complementary representations of principal monomial ideals in $F[x,y,z]$. Panel (A) depicts the cover relations in the truncated monomial lattice, while Panel (B) shows the corresponding lattice points in the simplex $\Delta_{3}^{3} = \{\alpha\in\mathbb{N}^3:|\alpha|\le 3\}$.}
\label{fig:hasse_simplex}
\end{figure}

Now consider the exponent vector $\beta = (2,2,0)$ corresponding to the principal monomial ideal $(x^{2}y^{2}) \subseteq F[x,y,z]$. Since $\vert \beta \vert = 2+2+0=4 > 3$, by Theorem \ref{thm3-bijection}, it is a graded 4-absorbing but not graded 3-absorbing ideal. Consequently the point $(2,2,0) \notin \Delta_{3}^{3}$ as illustrated in Figure \ref{fig:hasse_simplex} (B). This illustrates the sharp geometric boundary established by Theorem \ref{thm3-bijection};  principal monomial ideals are graded $n$-absorbing precisely when their exponent vectors lie in the discrete simplex $\Delta_{n}^{m}$. 

\end{example}

.
\begin{example}[Recovering algebraic and homological data through the simplex model]

Consider the principal monomial ideal $I=(x^{2}yz) \subseteq F[x,y,z]$. The corresponding exponent vector $\alpha = (2,1,1) \in \mathbb{N}^{3}$. By Theorem \ref{thm3-bijection}, the graded $n$-absorbing behavior of $I$ is determined entirely by the total degree of this exponent vector. Since $\vert \alpha \vert = 2+1+1 =4$, the ideal $I$ is graded 4-absorbing. Moreover, since $\vert \alpha \vert = 4 > 3$, we have that $\alpha \notin \Delta_{3}^{3}$ and therefore $I$ is not graded 3-absorbing. 

Geometrically, the exponent vector $\alpha = (2,1,1)$ corresponds to a lattice point in $\mathbb{N}^{3}$ lying on the boundary layer $\vert \alpha \vert =4$ of the discrete simplex 
\[
\Delta_{4}^{3} = \{\alpha \in \mathbb{N}^{3}: \vert \alpha \vert \leq 4\}.
\]
Thus, the simplex model recovers the exact absorbance level of the ideal. That is, the smallest simplex containing the exponent vector $\alpha = (2,1,1)$ is precisely $\Delta_{4}^{3}$ and hence $4 = \min \{k: \alpha \in \Delta_{k}^{3}\}$. This illustrates the nested hierarchy established in Theorem \ref{thm:mn}; increasing the absorbing parameter enlarges the corresponding simplex and incorporates additional expont vectors.

The exponent vector also provides information beyond the graded $n$-absorbing classification. In particular, the same combinatorial data that determines the location of the ideal within the nested hierarchy also determines algebraic invariants of the quotient ring. While graded $n$-absorbance is govered by the inequality $\vert \alpha \vert \leq n$, the total degree $\vert \alpha \vert$ determines the unique nontrivial degree shift appearing in the minimal graded free resolution of $R/I$. By Proposition \ref{prop:homologicalinv}, $\dim(F[x,y,z]/(x^{2}yz)) = 3-1 = 2$ and the minimal graded free resolution is 
\[
0\longrightarrow F[x,y,z](-4)
\xrightarrow{\cdot x^{2}yz} F[x,y,z]
\longrightarrow F[x,y,z]/(x^{2}yz)
\longrightarrow 0.
\]
Consequently, the only nonzero graded Betti numbers are 
\[
\beta_{0,0}=1,\qquad \beta_{1,4}=1.
\]

Therefore, the exponent vector $\alpha=(2,1,1)$ simultaneously records three types of information: its location in the simplex filtration determines the graded $n$-absorbing level, its coordinates determine the combinatorial position of the corresponding monomial in the lattice, and its total degree determines the graded shift in the minimal free resolution.

\end{example}

The examples above illustrate three complementary aspects of the simplex characterization as summarized in Table \ref{tab:simplex_examples} given below.

\begin{table}[ht]
\centering
\begin{tabular}{c|c|c|c|c}
\hline
\textbf{Example} &
\textbf{Ideal} &
\textbf{Exponent Vector} &
\textbf{Simplex Interpretation} &
\textbf{Homological Data} \\
\hline

1 &
$(x^{2}y)$ &
$(2,1,0)$ &
\makecell{
$|\alpha|=3$\\
$\alpha\in\Delta_3^3\setminus\Delta_2^3$\\
graded $3$-absorbing\\
not graded $2$-absorbing
}
&
\makecell{
$\dim(R/I)=2$\\
$\beta_{0,0}=1$\\
$\beta_{1,3}=1$
}
\\
\hline

2 &
$(x^{2}y^{2})$ &
$(2,2,0)$ &
\makecell{
$|\alpha|=4$\\
$\alpha\in\Delta_4^3\setminus\Delta_{3}^{3}$\\
graded $4$-absorbing
}
&
\makecell{
$\dim(R/I)=2$\\
$\beta_{0,0}=1$\\
$\beta_{1,4}=1$
}
\\
\hline

3 &
$(x^{2}yz)$ &
$(2,1,1)$ &
\makecell{
$|\alpha|=4$\\
$\alpha\in\Delta_4^3\setminus\Delta_3^3$\\
minimal simplex level $4$\\
graded $4$-absorbing
}
&
\makecell{
$\dim(R/I)=2$\\
$\beta_{0,0}=1$\\
$\beta_{1,4}=1$
}
\\

\hline
\end{tabular}
\caption{Summary of examples illustrating the simplex characterization and associated algebraic invariants of graded $n$-absorbing principal monomial ideals.}
\label{tab:simplex_examples}
\end{table}
\

These examples illustrate the central philosophy of this paper. Once principal monomial ideals are identified with lattice points, determining whether a principal monomial ideal is graded $n$-absorbing becomes a geometric problem: by Theorem \ref{thm3-bijection}, the ideal is graded $n$-absorbing precisely when its corresponding exponent vector lies in the discrete simplex $\Delta_{n}^{m}$. The same exponent vector also encodes additional algebraic information; its total degree determines both the minimal graded $n$-absorbing level and the graded shift in the minimal free resolution, while its coordinates record the combinatorial position of the monomial within the ambient lattice.

\section{Conclusion and Suggestions for Future Work}
The principal contribution of this paper is not merely the classification of graded $n$-absorbing principal monomial ideals, but rather the observation that graded $n$-absorbance admits a natural combinatorial realization in the principal monomial setting. Here, this realization takes the form of lattice points inside simplices together with their associated Hasse diagrams, providing a geometric interpretation of an algebraic property that had previously been studied primarily through ideal-theoretic methods. Consequently, the natural next stage of this research program is not simply to classify additional classes of ideals, but rather to identify the appropriate combinatorial models that govern graded $n$-absorbing behavior in increasingly general settings.

The first direction is to extend this framework beyond principal monomial ideals. Following the work of Choi \cite{choi2022n, choi2024n}, the next natural setting is that of finitely generated monomial ideals. Although this appears to be only a modest generalization, it represents a substantial increase in complexity. Throughout the present work, every principal monomial ideal is represented by a single exponent vector $\alpha\in\mathbb{N}^{m}$, allowing graded $n$-absorbance to be characterized by the simple condition $|\alpha|\leq n$. Consequently, the simplex naturally emerges as the parameter space for the theory. Once several generators are permitted, this description immediately breaks down. If
\[
I=(x^{\alpha_1},x^{\alpha_2},\dots,x^{\alpha_r}),
\]
where $\alpha_{i} \in \mathbb{N}^{m}$. The ideal can no longer be represented by a single lattice point but instead by a finite collection of exponent vectors whose interaction determines the algebraic structure. For example, the ideal
\[
(xy,xz,yz)
\]
is generated entirely by degree-two monomials, yet its ideal-theoretic properties are governed by the overlap of its generators rather than by their common degree. Thus there is no immediate analogue of the condition $|\alpha|\leq n$, suggesting that the simplex is no longer sufficient as a combinatorial parameter space.

A particularly promising intermediate step is the study of squarefree monomial ideals. By the Stanley-Reisner correspondence, squarefree monomial ideals are naturally associated with simplicial complexes \cite{francisco2014survey,garsia1984group}. Consequently, one may seek combinatorial characterizations of graded $n$-absorbing ideals in terms of distinguished structures within the associated simplicial complexes, such as faces, vertex covers, or other combinatorial invariants. Such a correspondence would provide a natural bridge between the simplex geometry developed in this paper and the richer combinatorics of arbitrary monomial ideals.

Beyond the squarefree setting, finitely generated monomial ideals suggest that the appropriate combinatorial model should no longer consist of individual lattice points but of geometric objects encoding collections of exponent vectors. Possible candidates include finite antichains, order ideals, hypergraphs, Newton polyhedra, and polyhedral cell complexes. Likewise, the Hasse diagram interpretation developed in this paper becomes substantially more subtle. While multiplication by a variable naturally corresponds to moving along a single edge for principal monomial ideals, no canonical elementary move exists once multiple generators are present. Different notions of adjacency lead to genuinely different graph structures, making the identification of the correct combinatorial model itself an interesting problem. Since many algebraic properties of monomial ideals are already encoded by Newton polyhedra and polyhedral cell complexes, these presently appear to be among the most promising candidates.

A second major direction is to move beyond monomial ideals altogether by considering principal binomial ideals, and ultimately arbitrary binomial ideals. This extension is fundamentally different in nature because binomial ideals encode not only combinatorial information through exponent differences but also algebraic information arising from relations among terms. For example,
\[
(x^{n}-y^{n})
\qquad\text{and}\qquad
(x^{n-1}y-y^{n})
\]
have the same degree but exhibit very different factorization behavior. Since graded $n$-absorbing ideals are defined through products of elements, relations among terms and cancellation phenomena become essential features, and the degree-counting arguments employed throughout the present work can no longer suffice.

Accordingly, a richer geometric framework is required. If $L\subseteq\mathbb{Z}^{m}$ is a lattice, the associated lattice ideal
\[
I_{L}=(x^{u}-x^{v}: u,v \in \mathbb{N}^{m}, \ u-v\in L)
\]
is governed by lattice relations in $\mathbb{Z}^{m}$ rather than exponent vectors in $\mathbb{N}^{m}$. Consequently, the natural geometry shifts from simplices to quotient lattices and toric structures, and the Hasse diagram interpretation developed here is replaced by geometry arising from lattice quotients and toric ideals. For example, over an algebraically closed field of characteristic not dividing $n$,
\[
(x^{n}-y^{n})
=
\bigcap_{k=0}^{n-1}(x-\zeta^{k}y),
\]
where $\zeta$ is an $n$th root of unity, illustrating that the underlying geometry now depends upon both lattice relations and the arithmetic of the coefficient field. Understanding graded $n$-absorbance in this setting therefore appears closely connected to the theory of lattice ideals, toric ideals, and Cayley graphs of finitely generated abelian groups.

Finally, beyond the classification of ideals themselves lies the broader objective of understanding how algebraic invariants are encoded by the resulting combinatorial geometry. For principal monomial ideals, invariants such as the Hilbert series, Betti numbers, and Krull dimension are readily computed and largely reflect the simplicity of principality. As the parameter spaces become more sophisticated, however, these invariants should become genuine geometric quantities rather than straightforward algebraic calculations. This suggests the broader conjectural principle that important classes of graded $n$-absorbing ideals may admit natural combinatorial parameter spaces in which algebraic properties are reflected geometrically. The principal monomial case provides evidence for this philosophy: the same exponent vector simultaneously encodes the graded $n$-absorbing level, the combinatorial position of the ideal, and basic homological invariants of the quotient. If established, such a framework would provide a unified geometric perspective on graded $n$-absorbing ideals extending well beyond the principal monomial setting developed in this paper.

\bibliographystyle{elsarticle-num}
\bibliography{references}

\end{document}